\documentclass{amsart}

\usepackage[latin1]{inputenc}
\usepackage{amsfonts}
\usepackage{amsmath}
\usepackage{mathtools}
\usepackage{amsthm}
\usepackage{amssymb,color}
\usepackage{latexsym}
\usepackage{enumerate}
\usepackage{tikz-cd} 
\usepackage{enumitem}
\usepackage[colorlinks]{hyperref}
\usepackage{rotating}

\newtheorem{theorem}{Theorem}[section]
\newtheorem{lemma}[theorem]{Lemma}
\newtheorem{proposition}[theorem]{Proposition}
\newtheorem{corollary}[theorem]{Corollary}

\numberwithin{equation}{section}

\theoremstyle{definition}
\newtheorem{definition}[theorem]{Definition}

\newtheorem{remark}[theorem]{Remark}

\theoremstyle{remark}

\newtheorem*{notation}{Notation}

\newcommand{\surj}{\twoheadrightarrow}
\newcommand{\hookr}{\hookrightarrow}
\newcommand{\A}{\mathcal{A}}
\newcommand{\C}{\mathcal{C}}
\newcommand{\D}{\mathcal{D}}
\newcommand{\U}{\mathcal{U}}
\newcommand{\B}{\mathcal{B}}
\newcommand{\F}{\mathcal{F}}
\newcommand{\E}{\mathcal{E}}

\newcommand{\Q}{\mathcal{Q}}
\newcommand{\J}{\mathcal{J}}
\newcommand{\KK}{\mathfrak{K}}
\newcommand{\LL}{\mathfrak{L}}
\newcommand{\G}{\mathcal{G}}

\newcommand{\la}{\langle}
\newcommand{\ra}{\rangle}
\newcommand{\Mult}{\operatorname{Mult}}
\newcommand{\Ad}{\operatorname{Ad}}
\newcommand{\id}{\operatorname{id}}

\begin{document}

\title{Universal AF-algebras}

\author{Saeed Ghasemi}
\address{Institute of Mathematics, Czech Academy of Sciences, Czechia}

\author{Wies\l aw Kubi\'s}
\address{Institute of Mathematics, Czech Academy of Sciences, Czechia \\
Institute of Mathematics, Cardinal Stefan Wyszy\'nski University in Warsaw, Poland}

\thanks{Research of the first author was supported by the GA\v CR project 19-05271Y and RVO: 67985840. Research of the second author was supported by the GA\v CR project  20-31529X and RVO: 67985840.}

\begin{abstract}
We study the approximately finite-dimensional (AF) $C^*$-algebras that appear as inductive limits of sequences of finite-dimensional $C^*$-algebras and left-invertible embeddings. 
 We show that there is such a separable AF-algebra $\A_\mathfrak{F}$ which is a split-extension of any  finite-dimensional $C^*$-algebra and has the property that any separable AF-algebra is isomorphic to a quotient of $\A_\mathfrak{F}$. Equivalently, by Elliott's classification of separable AF-algebras, there are surjectively universal countable scaled (or with order-unit) dimension groups.
 This universality is a consequence of our result stating that  $\A_\mathfrak{F}$ is the Fra\"\i ss\'e limit of the category of all finite-dimensional $C^*$-algebras and left-invertible embeddings.  
 
 With the help of Fra\"\i ss\'e theory we describe the Bratteli diagram of $\A_\mathfrak{F}$ and provide conditions characterizing it up to isomorphisms.  $\A_\mathfrak{F}$ belongs to a class of separable AF-algebras which are all Fra\"\i ss\'e limits of suitable categories of finite-dimensional $C^*$-algebras, and resemble $C(2^\mathbb N)$ in many senses. For instance, they have no minimal projections, tensorially absorb $C(2^\mathbb N)$ (i.e. they are $C(2^\mathbb N)$-stable) and satisfy similar homogeneity and universality properties as the Cantor set.

\ 

\noindent
\textbf{MSC (2010):} 
46L05, 
46L85, 
46M15. 

\noindent
\textbf{Keywords:} AF-algebra, Cantor property, left-invertible embedding, Fra\"\i ss\'e\ limit, universality.

\end{abstract}

\maketitle

\section{Introduction}
Operator algebraists often refer to (for good reasons, of course) the UHF-algebras such as CAR-algebra as the noncommutative analogues of the Cantor set $2^\mathbb N$, or more precisely the commutative $C^*$-algebra $C(2^\mathbb N)$. We introduce a different class of separable AF-algebras, we call them ``AF-algebras with Cantor property"  (Definition \ref{Bratteli}), which in some contexts are more suitable noncommutative analogues of $C(2^\mathbb N)$. One of the main features of AF-algebras with  Cantor property is that  they are direct limits of sequences of finite-dimensional $C^*$-algebras where the connecting maps are left-invertible homomorphisms. This property, for example, guarantees that if the algebra is infinite-dimensional, it has plenty of nontrivial ideals and quotients, while UHF-algebras are simple. The Cantor set is a ``special and unique" space in the category of all compact (zero-dimensional) metrizable spaces in the sense that it bears some universality and homogeneity properties; it maps onto any compact (zero-dimensional) metrizable space and it has the homogeneity property that any homeomorphism between finite quotients lifts to a homeomorphism of the Cantor set (see \cite{Kubis-FS}). Moreover, Cantor set is the unique compact zero-dimensional metrizable space with the property that (stated algebraically): for every $m,n\in \mathbb N$ and unital embeddings $\phi: \mathbb C^n \to \mathbb C^m$ and $\alpha: \mathbb C^n \to C(2^\mathbb N)$ there is an embedding $\beta:\mathbb C^m \to C(2^\mathbb N)$ such that the diagram 
\begin{equation*}
\begin{tikzcd}
 &  C(2^\mathbb N)   \\ 
\mathbb C^n \arrow[ur, hook, "\alpha"] \arrow[r, hook, "\phi"] & \mathbb C^m \arrow[u, hook, dashed, "\beta"]
\end{tikzcd}
\end{equation*}
commutes. Note that the map $\phi$ in the above must be left-invertible and if $\alpha$ is left-invertible then $\beta$ can be chosen to be left-invertible. Recall that a homomorphism $\phi: \B\to \A$ is left-invertible if there is a homomorphism $\pi: \A \to \B$ such that $\pi \circ \phi = \id_\B$.
The AF-algebras with Cantor property satisfy similar universality and homogeneity properties in their corresponding categories of finite-dimensional $C^*$-algebras and left-invertible homomorphisms.   Although, in general AF-algebras with  Cantor property are not assumed to be unital, when restricted to the categories with unital maps, one can obtain the unital AF-algebras with same properties subject to the condition that maps are unital.
For instance, the ``truly" noncommutative AF-algebra with  Cantor property $\A_\mathfrak F$, that was mentioned in the abstract,  is the unique (nonunital) AF-algebra which is the limit of a sequence of finite-dimensional $C^*$-algebras and left-invertible homomorphisms (necessarily embeddings), with the property that for every finite-dimensional $C^*$-algebras $\D, \E$ and (not necessarily unital) left-invertible embeddings  $\phi: \D \to \E$ and $\alpha: \D \to \A_\mathfrak F$ there is a left-invertible embedding $\beta:\E \to \A_\mathfrak F$ such that the diagram 
\begin{equation*}
\begin{tikzcd}
 &   \A_\mathfrak F  \\ 
\D \arrow[ur, hook, "\alpha"] \arrow[r, hook, "\phi"] & \E \arrow[u, hook, dashed, "\beta"]
\end{tikzcd}
\end{equation*}
commutes (Theorem \ref{A_F-char}).
One of our main results (Theorem \ref{universal AF-algebra}) states that $\A_\mathfrak F$ maps surjectively onto any separable AF-algebra. However, this universality property is not unique to $\A_\mathfrak F$ (Remark \ref{not unique}). 

The properties of the Cantor set that are mentioned above can be viewed as consequences of the fact that it is the ``Fra\"\i ss\'e  limit" of the class of all nonempty finite spaces  and surjective maps (as well as the class of all  nonempty compact metric spaces and continuous surjections); see \cite{Kubis-ME}. 
The theory of Fra\"\i ss\'e limits was introduced by R. Fra\"\i ss\'e \cite{Fraisse} in 1954 as a model-theoretic approach to the back-and-forth argument.  Roughly speaking, Fra\"\i ss\'e  theory establishes a correspondence between classes of finite (or finitely generated) models of a first-order language with certain properties (the joint-embedding property, the amalgamation property and having countably many isomorphism types), known as \emph{Fra\"\i ss\'e classes}, and the unique (ultra-)homogeneous and universal countable structure, known as the \emph{Fra\"\i ss\'e limit}, which can be represented as the union of a chain of models from the class. 
Fra\"\i ss\'e theory has been recently extended way beyond the countable first-order structures,
in particular, covering some topological spaces, Banach spaces and, even more recently, some $C^*$-algebras. Usually in these extensions the classical Fra\"\i ss\'e  theory is replaced by its ``approximate" version.  Approximate Fra\"\i ss\'e theory was developed by Ben Yaacov \cite{Ben Yaacov} in continuous model theory (an earlier approach was developed in \cite{Schoretsanitis}) and independently, in the framework of metric-enriched categories, by the second author~\cite{Kubis-ME}.
The Urysohn metric space, the separable infinite-dimensional Hilbert space \cite{Ben Yaacov}, and  the Gurari\u{\i} space \cite{Kubis-Solecki} are some of the other well known examples of Fra\"\i ss\'e limits of metric structures (see also \cite{Martino} for more on Fra\"\i ss\'e  limits in functional analysis).  

Fra\"\i ss\'e  limits of $C^*$-algebras are studied in  \cite{Eagle} and \cite{Masumoto}, where it has been shown that the Jiang-Su algebra, all UHF algebras, and the hyperfinite II$_1$-factor are  Fra\"\i ss\'e  limits of suitable classes of finitely generated $C^*$-algebras  with distinguished traces. Here we investigate the separable AF-algebras that arise as  limits of Fra\"\i ss\'e classes of finite-dimensional $C^*$-algebras.  Apart from $C(2^\mathbb N)$, which is the Fra\"\i ss\'e  limit of the class of all commutative finite-dimensional $C^*$-algebras and unital (automatically left-invertible) embeddings, all UHF-algebras  \cite[Theorem 3.4]{Eagle} and a class of simple monotracial AF-algebras described in \cite[Theorem 3.9]{Eagle}, are Fra\"\i ss\'e  limits of classes of finite-dimensional $C^*$-algebras.   
It is also worth noticing that the $C^*$-algebra $\mathcal K(H)$ of all compact operators on a separable Hilbert space $H$ and the universal UHF-algebra $\Q$ (see Section \ref{universal uhf}) are both Fra\"\i ss\'e limits of, respectively, the category of all  matrix algebras and (not necessarily unital) embeddings and the category of all matrix algebras and unital embeddings. 

In general, however, obstacles arising from the existence of traces prevent many classes of finite-dimensional $C^*$-algebras from having the amalgamation property (\cite[Proposition 3.3]{Eagle}), therefore making it difficult to realize AF-algebras as Fra\"\i ss\'e limits of such classes. The AF-algebra $C(2^\mathbb N)$ is neither a UHF-algebra nor it is among AF-algebras considered in \cite[Theorem 3.9]{Eagle}. Therefore, 
it is natural to ask whether
$C(2^\mathbb N)$ belongs to any larger nontrivial class of AF-algebras whose elements are Fra\"\i ss\'e limits of some class of finite-dimensional $C^*$-algebras. This was our initial motivation behind introducing the class of separable AF-algebras with Cantor property (Definition \ref{Bratteli}).
This class properly contains the AF-algebras of the form $M_n \otimes C(2^\mathbb N)$, for any matrix algebra $M_n$.

If $\A$ and $\B$ are $C^*$-algebras,  $\phi: \B \hookr \A$ is a left-invertible embedding and $\pi:\A \surj \B$ is a left inverse of $\phi$, then we have the short exact sequence 
\begin{equation*}
\begin{tikzcd}[ ampersand replacement=\&]
0 \arrow[r, hook] \& \ker(\pi) \arrow[r, hook,  "\iota"] \& \A \arrow[r, two heads, shift left, "\pi"]  \& \B \arrow[l, hook', shift left,  "\phi"] \arrow[r, hook] \& 0.
\end{tikzcd}
\end{equation*}
Therefore $\A$ is a ``split-extension" of $\B$. In this case we say $\B$ is a ``retract" of $\A$. It would be more convenient for us to say `` $\B$ is a retract of $\A$" rather than the more familiar phrase (for $C^*$-algebraists) ``$\A$ is a split-extension of $\B$".   In Section \ref{LI-section} we consider direct sequences of finite-dimensional $C^*$-algebras
$$\A_1 \xrightarrow{\phi_1^2}\A_2 \xrightarrow{\phi_2^3}\A_3 \xrightarrow{\phi_3^4} \dots $$
where each $\phi_n^{n+1}$ is a left-invertible embedding. The  AF-algebra $\A$ that arises as the limit of this sequence has the property that every matrix algebra $M_k$ appearing as a direct-sum component (an ideal) of some $\A_n$ is a retract of $\A$ (equivalently, $\A$ is a split-extension of each $\A_n$ and such $M_k$). Moreover, every retract of $\A$ which is a matrix algebra, appears as a direct-sum component of some $\A_n$ (Lemma \ref{sub-retract}).  The AF-algebras with Cantor property are defined and studied in Section \ref{CP-section}. They are characterized by the set of their matrix algebra retracts. That is, two AF-algebras with Cantor property are isomorphic if and only if they have exactly the same matrix algebras as their retracts (Corollary \ref{retract-coro}), i.e., they are split-extensions of the same class of matrix algebras.

We will use the Fra\"\i ss\'e-theoretic framework of (\emph{metric-enriched}) categories described in \cite{Kubis-ME}, rather than the (metric) model-theoretic approach to the Fra\"\i ss\'e theory. A brief introduction to Fra\"\i ss\'e  categories is provided in Section \ref{Flim-intro-section}.
We show that (Theorem \ref{closed-Fraisse}) any category of finite-dimensional $C^*$-algebras and (not necessarily unital) left-invertible embeddings, which is closed under taking direct sums and ideals of its objects (we call these categories  $\oplus$-stable) is a Fra\"\i ss\'e category.  Moreover,  Fra\"\i ss\'e limits of these categories have the Cantor property (Lemma \ref{A_K-CS}) and in fact any AF-algebra $\A$ with Cantor property can be realized as Fra\"\i ss\'e limit of such a category, where the objects of this category are precisely the finite-dimensional retracts of $\A$ (see Definition \ref{retract-def} and Theorem \ref{Cantor-closed-Fraisse}).  

 In particular, the category $\mathfrak F$ of \emph{all} finite-dimensional $C^*$-algebras and left-invertible embeddings  is a Fra\"\i ss\'e category (Section \ref{universal-section}). A priori, the Fra\"\i ss\'e limit $\mathcal A_{\mathfrak F}$ of this category is a separable AF-algebra with the universality property that any separable AF-algebra $\mathcal A$ which is the limit of a sequence of finite-dimensional $C^*$-algebras with left-invertible embeddings as connecting maps, can be embedded into $\A_\mathfrak F$ via a left-invertible embedding, i.e., $\A_\mathfrak F$ is a split-extension of $\A$. In particular, there is a surjective homomorphism $\theta :\A_\mathfrak F \surj \A$. Also any separable AF-algebra is isomorphic to a quotient (by an essential ideal) of an AF-algebra which is the limit  of a sequence of finite-dimensional $C^*$-algebras with left-invertible embeddings (Proposition \ref{ess-quotient}). Combining the two quotient maps, we have the following result, which is later restated as Theorem \ref{universal AF-algebra}.

\begin{theorem}\label{main theorem}
 The  category of all finite-dimensional $C^*$-algebras and left-invertible embeddings is a  Fra\"\i ss\'e category. Its Fra\"\i ss\'e  limit $\mathcal A_{\mathfrak F}$ is a separable AF-algebra such that
 \begin{itemize}
     \item $\mathcal A_{\mathfrak F}$ is a split-extension of any AF-algebra which is the limit of a sequence of finite-dimensional $C^*$-algebras and left-invertible connecting maps.
     \item  there is a surjective homomorphism from $\A_{\mathfrak F}$ onto any separable AF-algebra.
 \end{itemize}
\end{theorem}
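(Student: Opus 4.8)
The plan is to prove Theorem \ref{main theorem} by combining the abstract Fra\"\i ss\'e machinery of \cite{Kubis-ME} with the structural results on left-invertible embeddings developed in the earlier sections. First I would verify that the category $\mathfrak F$ of all finite-dimensional $C^*$-algebras and left-invertible embeddings is a Fra\"\i ss\'e category. By Theorem \ref{closed-Fraisse}, any $\oplus$-stable category of finite-dimensional $C^*$-algebras with left-invertible embeddings is Fra\"\i ss\'e, and $\mathfrak F$ is visibly $\oplus$-stable since it contains \emph{all} finite-dimensional $C^*$-algebras and hence is closed under direct sums and ideals; so this step is immediate. This also guarantees the existence and uniqueness of the Fra\"\i ss\'e limit $\A_\mathfrak F$, which is a separable AF-algebra (being the limit of a sequence of finite-dimensional $C^*$-algebras with left-invertible connecting maps) and satisfies the extension/cofinality property recorded in the Introduction (the commuting triangle with $\D,\E,\alpha,\beta$).

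Next I would establish the first bullet point. Let $\A$ be any AF-algebra arising as the limit of a sequence $\A_1 \xrightarrow{\phi_1^2} \A_2 \xrightarrow{\phi_2^3} \cdots$ of finite-dimensional $C^*$-algebras with left-invertible connecting maps. Each $\A_n$ is an object of $\mathfrak F$, and each $\phi_n^{n+1}$ is a morphism of $\mathfrak F$. The standard universality property of a Fra\"\i ss\'e limit (the ``cofinality'' of the limit in the category of sequences, cf.\ \cite{Kubis-ME}) then yields a left-invertible embedding $\A \hookrightarrow \A_\mathfrak F$: one builds compatible morphisms $\A_n \to \A_\mathfrak F$ inductively using the extension property above, starting from any morphism $\A_1 \to \A_\mathfrak F$ (which exists since $\mathfrak F$ has an initial-type object or at least the joint embedding property), and passes to the limit. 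Since $\A_\mathfrak F$ is a $C^*$-algebra and the embedding is a direct limit of left-invertible embeddings, it is itself left-invertible, so $\A_\mathfrak F$ is a split-extension of $\A$; equivalently, $\A$ is a retract of $\A_\mathfrak F$.

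For the second bullet point I would invoke Proposition \ref{ess-quotient}: every separable AF-algebra $\A$ is isomorphic to a quotient (by an essential ideal) of some AF-algebra $\widetilde\A$ which is the limit of a sequence of finite-dimensional $C^*$-algebras with left-invertible connecting maps. Applying the first bullet to $\widetilde\A$ gives a surjection $\theta_0 : \A_\mathfrak F \surj \widetilde\A$ (a left-invertible embedding $\widetilde\A \hookrightarrow \A_\mathfrak F$ has, by definition, a left inverse which is a surjective homomorphism onto $\widetilde\A$). Composing $\theta_0$ with the quotient map $\widetilde\A \surj \widetilde\A / I \cong \A$ produces a surjective homomorphism $\A_\mathfrak F \surj \A$, as required.

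The main obstacle is not in the present theorem but is deferred to its prerequisites: the genuinely hard work is Theorem \ref{closed-Fraisse} (establishing the amalgamation property for categories of finite-dimensional $C^*$-algebras with left-invertible embeddings, which is exactly where traces obstruct amalgamation in the unital-embedding setting, so the left-invertibility and the non-unitality must be used essentially) and Proposition \ref{ess-quotient} (representing an arbitrary separable AF-algebra as an essential-ideal quotient of a ``left-invertible-limit'' AF-algebra, presumably by a Bratteli-diagram modification that pads each stage so that all connecting maps become left-invertible while only adding an essential ideal in the limit). Granting those, the proof of Theorem \ref{main theorem} is the short assembly sketched above; the only point requiring a little care is checking that a direct limit of left-invertible embeddings of $C^*$-algebras is again left-invertible, which follows because the left inverses can be chosen compatibly (using $\oplus$-stability to split off the relevant ideals at each finite stage) and then passed to the limit.
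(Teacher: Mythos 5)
Your proposal reproduces the paper's own decomposition: $\mathfrak F$ is $\oplus$-stable, hence Fra\"\i ss\'e by Theorem \ref{closed-Fraisse}; the second bullet follows by composing the split surjection $\A_\mathfrak F\surj\A$ (first bullet applied to the algebra $\A$ supplied by Proposition \ref{ess-quotient}) with the quotient map onto the given AF-algebra. That part is exactly the paper's argument.

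The one step you treat too lightly is the first bullet. Plain Fra\"\i ss\'e universality (Theorem \ref{uni-hom}) gives an embedding $\A\hookr\A_\mathfrak F$ by inductively extending arrows $\alpha_n\colon\A_n\to U_{k_n}$, but it does not make that embedding left-invertible: each $\alpha_n$ has a left inverse $\beta_n$, yet for the $\beta_n$ to descend to a left inverse on the limit they must satisfy the backward compatibility $\psi_n^{n+1}\circ\beta_n=\beta_{n+1}\circ\phi_{k_n}^{k_{n+1}}$, and this cannot in general be arranged \emph{after} the forward maps have been fixed. Your justification --- ``the left inverses can be chosen compatibly (using $\oplus$-stability to split off the relevant ideals at each finite stage)'' --- names the right ingredient but is not a proof. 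The paper's mechanism is Lemma \ref{EPlim-retract}: one works in the category $\ddagger\mathfrak F$ of embedding--projection pairs, whose \emph{proper} amalgamation property (the genuinely stronger conclusion of Theorem \ref{closed-Fraisse}, requiring the square $\theta'\circ\phi'=\psi\circ\pi$ and its mate to commute, not just the usual amalgamation square) lets one build the forward and backward maps simultaneously in a zigzag that fully commutes, and only then pass to the limit. So the statement ``a direct limit of left-invertible embeddings is left-invertible'' is not a one-line fact; it is the main technical content of Section 5 and should be invoked (or proved) explicitly rather than folded into a parenthetical.
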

The Bratteli diagram of $\A_\mathfrak F$ is described in Proposition \ref{Bratteli diagram-A_F}, using the fact that it has the Cantor property. It is the unique AF-algebra with Cantor property which is a split-extension of every finite-dimensional $C^*$-algebra. The unital versions of these results are given in Section \ref{unital-sec} (with a bit of extra work, since unlike $\mathfrak F$, the category of all finite-dimensional $C^*$-algebras and \emph{unital} left-invertible maps is not a  Fra\"\i ss\'e category, namely, it lacks the joint embedding property).

Separable AF-algebras are famously characterized \cite{Elliott} by their $K_0$-invariants which are scaled countable dimension groups (with order-unit, in the unital case). By applying the $K_0$-functor to Theorem \ref{main theorem} we have the following result.
\begin{corollary}
There is a scaled countable dimension group (with order-unit) which maps onto any  scaled  countable dimension group (with order-unit).
\end{corollary}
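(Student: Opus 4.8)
The plan is to apply the $K_0$-functor to Theorem~\ref{main theorem} and track what happens to the universality statement under Elliott's classification. First I would recall that $K_0$ is a functor from the category of AF-algebras and $*$-homomorphisms to the category of countable scaled dimension groups (resp. dimension groups with order-unit, when one restricts to unital algebras and unital homomorphisms), and that by Elliott's theorem \cite{Elliott} this functor is an equivalence onto its image: in particular it is full and faithful on isomorphisms, and — crucially for us — it sends surjective $*$-homomorphisms of separable AF-algebras onto surjective morphisms of scaled dimension groups, and conversely every scaled countable dimension group (resp.\ with order-unit) is realized as $K_0$ of some separable AF-algebra. This last surjectivity-of-realization is the key input: it guarantees that the class of $K_0$-images is \emph{all} scaled countable dimension groups, not merely a subclass.

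Next I would set $G := K_0(\A_{\mathfrak F})$, which is a countable scaled dimension group since $\A_{\mathfrak F}$ is a separable AF-algebra. Given an arbitrary scaled countable dimension group $H$, Elliott's realization theorem produces a separable AF-algebra $\B$ with $K_0(\B) \cong H$ as scaled dimension groups. By the second bullet of Theorem~\ref{main theorem}, there is a surjective homomorphism $\theta : \A_{\mathfrak F} \surj \B$. Applying $K_0$ and using that $K_0$ carries surjections of separable AF-algebras to surjective morphisms of the associated scaled dimension groups, we obtain a surjective morphism $K_0(\theta) : G \surj K_0(\B) \cong H$. Since $H$ was arbitrary, $G$ maps onto every scaled countable dimension group. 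For the order-unit version one runs the identical argument starting from the unital Fra\"\i ss\'e limit discussed in Section~\ref{unital-sec} (whose existence and universality among unital separable AF-algebras is established there), using that $K_0$ of a unital homomorphism preserves the order-unit and that a surjective unital homomorphism induces a surjection of ordered groups with order-unit.

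The only point requiring care — and hence the main (mild) obstacle — is the precise behavior of $K_0$ on surjections and the exactness properties one needs: namely that $K_0$ is half-exact on AF-algebras so that a quotient map $\A \surj \B$ induces a surjection $K_0(\A) \surj K_0(\B)$ of the underlying groups, and that this surjection respects the scaling (the image of the scale of $K_0(\A)$ is cofinal in, hence generates as an order ideal, the scale of $K_0(\B)$), so that it is a morphism in the category of scaled dimension groups. These are standard facts about $K$-theory of AF-algebras (continuity of $K_0$ under inductive limits, plus the fact that for AF-algebras $K_0$ of an ideal–quotient sequence is exact), so no new argument is needed; one simply invokes them. With that in hand the corollary is immediate.
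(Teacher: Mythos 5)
Your proposal is correct and follows essentially the same route as the paper: the authors obtain this corollary precisely by applying the $K_0$-functor to Theorem~\ref{main theorem} (for the order-unit case, to the unital analogue from Section~\ref{unital-sec}), relying on Elliott's classification to realize every scaled countable dimension group as $K_0$ of a separable AF-algebra and on the standard fact that $K_0$ carries surjections of AF-algebras to surjections of scaled dimension groups. Your added remarks on half-exactness and projection lifting just make explicit what the paper leaves as standard $K$-theory background.
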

The corresponding characterizations of these dimension groups are mentioned in Section \ref{K0-section}.

Finally, this paper could have been written entirely in the language of partially ordered abelian groups, where the categories of ``simplicial groups" and left-invertible positive embeddings replace our categories. However, we do not see any clear advantage in doing so.

\section{Preliminaries}\label{pre-sec}
Recall that an approximately finite-dimensional (AF) algebra is a $C^*$-algebra which is an inductive limit of a sequence of finite-dimensional $C^*$-algebras. 
 We review a few basic facts about separable AF-algebras. The background needed regarding AF-algebras is quite elementary and \cite{Davidson} is more than sufficient. The AF-algebras that are considered here are always separable and therefore by ``AF-algebra" we always mean ``separable AF-algebra".
 AF-algebras can be characterized up to isomorphisms by their Bratteli diagrams \cite{Bratteli}. However, there is no efficient way (at least visually) to decide whether two Bratteli diagrams are isomorphic, i.e., they correspond to isomorphic AF-algebras. A much better characterization of AF-algebras uses $K$-theory. To each $C^*$-algebra the $K_0$-functor assigns a partially ordered abelian group (its $K_0$-group) which turns out to be a complete invariant  for AF-algebras \cite{Elliott}. Moreover, there is a complete description of all possible $K_0$-groups of AF-algebras. Namely, a partially ordered abelian group is isomorphic to the $K_0$-group of an AF-algebra if and only if it is a countable dimension group.  
 
 We mostly use the notation from \cite{Davidson} with minor adjustments. 
  Let $M_k$ denote the $C^*$-algebra of all $k\times k$ matrices over $\mathbb C$.
 Suppose $\A = \varinjlim (\A_n, \phi_n^m)$ is an AF-algebra with Bratteli diagram $\mathfrak D$ such that each $\A_n \cong \A_{n,1} \oplus \dots\oplus \A_{n,\ell}$, is a finite-dimensional $C^*$-algebra and each $\A_{n,s}$ is a full matrix algebra. The node of $\mathfrak D$ corresponding to $\A_{n,s}$ is  ``officially" denoted by $(n,s)$, while intrinsically it carries over a natural number $\dim(n,s)$, which represents the dimension of the matrix algebra $\A_{n,s}$, i.e. $\A_{n,s} \cong M_{\dim(n,s)}$.
For  $(n,s) , (m,t) \in \mathfrak D$ we write $(n,s) \to (m,t)$ if $(n,t)$ is connected $(m,t)$ by at least one path  in $\mathfrak D$,  i.e. if $\phi_n^m$ sends $\A_{n,s}$ faithfully into $\A_{m,t}$.

 The ideals of AF-algebras are also AF-algebras and they can be recognized from the Bratteli diagram of the algebra. Namely, the  Bratteli diagrams of ideals correspond to \emph{directed} and \emph{hereditary} subsets of the Bratteli diagram of the algebra (see \cite[Theorem III.4.2]{Davidson}).  Recall that an essential ideal $\J$ of $\A$ is an ideal which has nonzero intersections with every nonzero  ideal of $\A$. Suppose $\mathfrak D$ is the Bratteli diagram for an AF-algebra $\A$ and $\J$ is an ideal of $\A$ whose Bratteli diagram corresponds to $\mathfrak J \subseteq \mathfrak D$. Then $\J$ is essential if and only if for every $(n,s)\in \mathfrak D$ there is $(m,t)\in \mathfrak J$ such that $(n,s) \to (m,t)$.

If $\D =\D_1 \oplus \dots \oplus \D_l $ and $  \E = \E_1 \oplus \dots \oplus \E_k$ are finite-dimensional $C^*$-algebras where $\D_i$ and $\E_j$ are matrix algebras and $\phi: \D \to \E$ is a homomorphism, we denote the ``multiplicity of $\D_i$ in $\E_j$ along $\phi$" by $\Mult_\phi(\D_i, \E_j) $. Also let $\Mult_\phi(\D, \E_j) $ denote the tuple 
$$(\Mult_\phi(\D_1, \E_j), \dots , \Mult_\phi(\D_l, \E_j))\in \mathbb N^l.$$ 
Suppose $\pi_j: \E \to \E_j$ is the canonical projection.
If $\Mult_\phi(\D, \E_j) = (x_1, \dots, x_l) $ then the group homomorphism $K_0(\pi_j \circ \phi): \mathbb Z^l \to \mathbb Z$  sends $(y_1, \dots, y_l)$ to $\sum_{i\leq l} x_i y_i$. 
Therefore if $\phi, \psi : \D \to \E$ are homomorphisms, we have $K_0(\phi) = K_0(\psi)$ if and only if $\Mult_\phi(\D, \E_j)= \Mult_\psi(\D, \E_j)$ for every $j\leq k$.

The following  well known facts about AF-algebras will be used several times throughout the article. We denote the unitization of $\A$ by $\widetilde \A$ and if $u$ is a unitary in $\widetilde\A$, then $\Ad_u$ denotes the inner automorphisms of $\A$ given by $a \to u^* a u$.

\begin{lemma}{\cite[Lemma III.3.2]{Davidson}}\label{twisting} Suppose $\epsilon >0$ and $\{\A_n\}$ is an increasing sequence of finite-dimensional $C^*$-algebras such that $\A = \overline{\bigcup \A_n}$. If $\F$ is a finite-dimensional subalgebra of $\A$, then there are $m\in \mathbb N$ and a unitary $u$ in $\widetilde \A$ such that $u^* \F u \subseteq \A_m $ and $\|1-u\| < \epsilon$.
\end{lemma}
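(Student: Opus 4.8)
The plan is to reduce the lemma to two standard facts about finite-dimensional $C^*$-algebras: an approximate system of matrix units inside a $C^*$-algebra lies near an exact one in the same algebra, and two finite-dimensional subalgebras that are close to each other are conjugate by a unitary close to $1$. \emph{Step 1 (coordinates).} Write $\F\cong\bigoplus_{i=1}^{r}M_{k_i}$ and fix matrix units $\{e^{(i)}_{pq}:1\le i\le r,\ 1\le p,q\le k_i\}$ spanning $\F$. Fix $\delta>0$, to be specified at the end small enough — depending only on $\epsilon$ and on $\dim\F$ — so that the cumulative estimates below yield a unitary within $\epsilon$ of $1$.

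\emph{Step 2 (a copy of $\F$ inside some $\A_m$).} Since $\bigcup_n\A_n$ is dense in $\A$, choose $m\in\mathbb N$ and $a^{(i)}_{pq}\in\A_m$ with $\|a^{(i)}_{pq}-e^{(i)}_{pq}\|<\delta$. Symmetrizing the candidates for the diagonal units, passing to spectral projections, orthogonalizing them block by block, and rebuilding the off-diagonal entries by polar decomposition converts, for $\delta$ small, the family $\{a^{(i)}_{pq}\}$ into an \emph{exact} system of matrix units $\{f^{(i)}_{pq}\}\subseteq\A_m$ with $\|f^{(i)}_{pq}-e^{(i)}_{pq}\|<\delta'$, where $\delta'\to 0$ as $\delta\to 0$ at a rate depending on the $k_i$. (Equivalently, one may simply invoke semiprojectivity of finite-dimensional $C^*$-algebras with respect to the system $\A_m\subseteq\A_{m+1}\subseteq\cdots\subseteq\A$.) Let $\F':=\operatorname{span}\{f^{(i)}_{pq}\}\subseteq\A_m$ and let $\psi\colon\F\to\F'$ be the $*$-isomorphism with $\psi(e^{(i)}_{pq})=f^{(i)}_{pq}$; then $\|\psi(x)-x\|\le(\dim\F)\,\delta'\|x\|$.

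\emph{Step 3 (straighten by a small unitary).} Work in $\widetilde\A$. As $\|1_{\F'}-1_{\F}\|<1$ for $\delta$ small, the unitary $w_1$ obtained from the polar decomposition of $1_{\F'}1_{\F}+(1-1_{\F'})(1-1_{\F})$ satisfies $\|1-w_1\|\to 0$ and $w_1 1_{\F} w_1^{*}=1_{\F'}$; replacing $\F$ by $w_1\F w_1^{*}$ we may assume $1_{\F}=1_{\F'}=:e$, so that all relevant matrix units lie in the unital corner $e\widetilde\A e$. Set
\[
 v\;:=\;\sum_{i=1}^{r}\sum_{p=1}^{k_i}f^{(i)}_{p1}\,e^{(i)}_{1p}\ \in\ e\widetilde\A e .
\]
Using the matrix-unit relations one checks that $v^{*}v$ and $vv^{*}$ lie within $O((\dim\F)\,\delta')$ of $e$, and that $v\,e^{(i)}_{pq}\,v^{*}=f^{(i)}_{p1}e^{(i)}_{11}f^{(i)}_{1q}$ lies within $O(\delta')$ of $f^{(i)}_{pq}$. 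So for $\delta$ small the polar decomposition of $v$ produces a unitary of $e\widetilde\A e$ close to $e$ whose conjugation moves $\F$ onto a finite-dimensional subalgebra of $e\widetilde\A e$ strictly closer to $\F'$; iterating this straightening (a routine intertwining/convergence argument, each round shrinking the error) — or invoking semiprojectivity once more to land on $\F'$ exactly — yields a unitary $w\in\widetilde\A$, a product of the small unitaries constructed along the way, with $\|1-w\|<\epsilon$ and $w\F w^{*}=\F'\subseteq\A_m$. Then $u:=w^{*}$ satisfies $u^{*}\F u=\F'\subseteq\A_m$ and $\|1-u\|=\|1-w\|<\epsilon$.

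\emph{Main obstacle.} The skeleton is routine; the real work lies in Steps 2--3 — converting the approximate matrix units into exact ones \emph{inside the same} $\A_m$ while tracking how the tolerances accumulate across the $r$ blocks, and then keeping $\|1-u\|$ below $\epsilon$. The cleanest way to sidestep the bookkeeping is to quote that finite-dimensional $C^*$-algebras are semiprojective (handling Step 2 at once) together with the standard fact that two finite-dimensional subalgebras joined by a near-identity isomorphism are conjugate by a unitary near $1$ (Step 3).
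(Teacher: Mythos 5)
The paper offers no proof of this lemma --- it is quoted directly from Davidson's book (Lemma III.3.2) --- and your sketch reproduces essentially the standard argument given there: approximate the matrix units of $\F$ inside some $\A_m$ by density, stabilize them to an exact system of matrix units, and conjugate by a unitary near $1$. The only simplification worth noting is that your element $v=\sum_{i,p}f^{(i)}_{p1}e^{(i)}_{1p}$ satisfies the exact intertwining relation $v\,e^{(i)}_{pq}=f^{(i)}_{pq}\,v$ (both sides equal $f^{(i)}_{p1}e^{(i)}_{1q}$), so its polar part conjugates $\F$ onto $\F'$ exactly in one step and the iteration at the end of your Step~3 is unnecessary.
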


\begin{lemma}\label{fd-af}
Suppose $\D$ is a finite-dimensional $C^*$-algebra, $\A$ is a separable AF-algebra and $\phi,\psi: \D \to \A$ are homomorphisms such that $\|\phi - \psi\|< 1$. Then there is a unitary $u\in \widetilde \A$ such that $\Ad_u \circ \psi = \phi$.   
\end{lemma}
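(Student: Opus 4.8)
The plan is to construct the unitary more or less explicitly, exploiting that $\D$, being finite-dimensional, is the linear span of a system of matrix units, so that a $*$-homomorphism out of $\D$ is determined by the images of those matrix units, and two ``close'' systems of matrix units can be intertwined. The one external ingredient I would use is the standard fact that if $e,f$ are projections in a $C^*$-algebra with $\|e-f\|<1$, then $e$ and $f$ are Murray--von Neumann equivalent via a partial isometry lying in that algebra; applied inside $\widetilde\A$ it also supplies a partial isometry between the complements $1-e$ and $1-f$.

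Write $\D=\bigoplus_{i=1}^{n}M_{k_i}$ and fix matrix units $\{e^{(i)}_{jl}\}_{1\le j,l\le k_i}$ for the $i$-th summand. Set $x^{(i)}_{jl}:=\phi(e^{(i)}_{jl})$ and $y^{(i)}_{jl}:=\psi(e^{(i)}_{jl})$, and let $p:=\phi(1_\D)=\sum_{i,j}x^{(i)}_{jj}$, $q:=\psi(1_\D)=\sum_{i,j}y^{(i)}_{jj}$. Since $e^{(i)}_{11}$ and $1_\D$ are contractions, $\|\phi-\psi\|<1$ gives $\|x^{(i)}_{11}-y^{(i)}_{11}\|<1$ for every $i$ and $\|p-q\|<1$. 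By the fact recalled above, choose partial isometries $s_i\in\A$ with $s_i^*s_i=x^{(i)}_{11}$, $s_is_i^*=y^{(i)}_{11}$, and a partial isometry $t\in\widetilde\A$ with $t^*t=1-p$, $tt^*=1-q$. Put
\[
v:=\sum_{i=1}^{n}\sum_{j=1}^{k_i}y^{(i)}_{j1}\,s_i\,x^{(i)}_{1j}\ \in\A .
\]
A direct computation using only the matrix-unit relations, the orthogonality of distinct summands of $\D$, and the relations $s_i^*s_i=x^{(i)}_{11}$, $s_is_i^*=y^{(i)}_{11}$ shows that $v^*v=p$, $vv^*=q$ and $v\,\phi(a)=\psi(a)\,v$ for all $a\in\D$; so $v$ is a partial isometry with initial projection $p$ and final projection $q$ which intertwines $\phi$ and $\psi$.

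Finally set $u:=v+t\in\widetilde\A$. Since the initial (respectively final) projections of $v$ and $t$ form the orthogonal pair $p,1-p$ (respectively $q,1-q$), all cross terms vanish and $u^*u=p+(1-p)=1$, $uu^*=q+(1-q)=1$, so $u$ is a unitary of $\widetilde\A$. For $a\in\D$ we have $\psi(a)=q\psi(a)q$, hence $t^*\psi(a)=0=\psi(a)t$, and therefore
\[
\Ad_u(\psi(a))=u^*\psi(a)u=v^*\psi(a)v=v^*\bigl(v\phi(a)\bigr)=(v^*v)\phi(a)=p\,\phi(a)=\phi(a),
\]
i.e.\ $\Ad_u\circ\psi=\phi$. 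The argument uses nothing about $\A$ beyond being a $C^*$-algebra, so the AF hypothesis merely fixes the context. The only point requiring any care, and the reason for introducing $t$, is that $v$ by itself is just a partial isometry implementing the equivalence on $\psi(\D)$: turning it into an honest unitary of $\widetilde\A$ also requires matching the complementary projections $1-p$ and $1-q$, which is where $\|p-q\|<1$ is used a second time. Beyond this bookkeeping there is no real obstacle; one could also simply cite the rigidity (semiprojectivity) of finite-dimensional $C^*$-algebras.
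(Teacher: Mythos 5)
Your proof is correct, but it takes a genuinely different route from the paper's. The paper disposes of the lemma in two lines: from $\|\phi-\psi\|<1$ it deduces $K_0(\phi)=K_0(\psi)$ (projections at distance $<1$ have the same $K_0$-class), and then invokes \cite[Lemma 7.3.2]{K-theory Rordam}, which says that homomorphisms out of a finite-dimensional $C^*$-algebra into an algebra with cancellation are classified up to unitary equivalence in $\widetilde\A$ by their $K_0$-maps; the AF hypothesis enters precisely there, since AF-algebras have cancellation. You instead build the unitary by hand: Murray--von Neumann equivalences $s_i$ between the images of the $(1,1)$ matrix units (available because those images are at distance $<1$), the algebraically assembled intertwiner $v=\sum_{i,j}y^{(i)}_{j1}s_i x^{(i)}_{1j}$, and a separate partial isometry $t$ matching $1-p$ with $1-q$ --- the last step being exactly where the paper would otherwise need cancellation, and which you get for free from $\|p-q\|<1$. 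The computations you outline ($v^*v=p$, $vv^*=q$, $v\phi(a)=\psi(a)v$, vanishing of the cross terms in $u=v+t$) all check out. What your approach buys is self-containedness and genuine extra generality: the statement holds for an arbitrary $C^*$-algebra $\A$, not just an AF-algebra. What the paper's approach buys is brevity and a clean conceptual reduction to the standard $K_0$-classification. One small caveat on your closing remark: citing semiprojectivity/stability of finite-dimensional $C^*$-algebras would only give unitary equivalence for $\|\phi-\psi\|$ below a constant depending on $\D$, not at the sharp threshold $1$, so that shortcut would prove a weaker statement than the lemma as used in the paper (which applies it with perturbations merely $<1$).
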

\begin{proof}
We have $K_0(\phi) = K_0(\psi)$, since otherwise for some  nonzero projection $p$ in $\D$ the dimensions of the projections $\phi(p)$ and $\psi(p)$ differ and hence $\|\psi - \phi \| \geq 1$. Therefore there is a unitary $u$ in $\widetilde \A$ such that $\Ad_u \circ \psi  = \phi$, by \cite[Lemma 7.3.2]{K-theory Rordam}.
\end{proof}

\begin{lemma}\label{matrix-absorbing}
Suppose $\D = \D_1 \oplus \dots \oplus \D_l$ is a finite-dimensional $C^*$-algebra, where each  $\D_i$ is a  matrix algebra. Assume $\gamma: \D \hookr M_k$ and $\phi: \D \hookr M_\ell$ are embeddings. The following are equivalent. 
\begin{enumerate}
 \item There is an embedding $\delta: M_k \hookr M_\ell$ such that $\delta \circ \gamma = \phi$.
\item There is an embedding $\delta: M_k \hookr M_\ell$ such that $\|\delta \circ \gamma  - \phi \| <1$.
 \item  There is a natural number $c\geq 1$ such that $\ell \geq c k$ and  $\Mult_\phi(\D, M_\ell)  = c \Mult_\gamma(\D, M_k) $.
 \end{enumerate}
\end{lemma}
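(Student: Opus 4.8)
The plan is to establish the cycle $(1)\Rightarrow(2)\Rightarrow(1)$ and, separately, the equivalence $(1)\Leftrightarrow(3)$; the implication $(1)\Rightarrow(2)$ is immediate. The backbone of everything else is the standard fact, already invoked in the proof of Lemma \ref{fd-af} via \cite[Lemma 7.3.2]{K-theory Rordam}, that a $\ast$-homomorphism out of a finite-dimensional $C^*$-algebra is determined up to unitary equivalence by the map it induces on $K_0$ — equivalently, by its multiplicity tuples. Combined with the bookkeeping from Section \ref{pre-sec} relating $\Mult$ and $K_0$, this reduces the whole lemma to elementary computations with multiplicities.

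For $(2)\Rightarrow(1)$ I would apply Lemma \ref{fd-af} to the homomorphisms $\delta\circ\gamma,\phi\colon\D\to M_\ell$: this is legitimate since $\D$ is finite-dimensional, $M_\ell$ is a (separable) AF-algebra, and $\|\delta\circ\gamma-\phi\|<1$ by hypothesis. It produces a unitary $u\in\widetilde{M_\ell}$ with $\Ad_u\circ(\delta\circ\gamma)=\phi$, and then $\delta':=\Ad_u\circ\delta\colon M_k\hookr M_\ell$ is again an embedding (as $\Ad_u$ restricts to an automorphism of $M_\ell$) satisfying $\delta'\circ\gamma=\phi$, which is $(1)$.

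For $(1)\Rightarrow(3)$, set $c:=\Mult_\delta(M_k,M_\ell)$. Since $\delta$ is a nonzero injective homomorphism, $c\geq1$ and $\delta(1_{M_k})$ is a projection of rank $ck$, so $ck\leq\ell$; moreover $K_0(\delta)\colon\mathbb Z\to\mathbb Z$ is multiplication by $c$, hence $K_0(\phi)=K_0(\delta\circ\gamma)=c\cdot K_0(\gamma)$, and reading off multiplicities as in Section \ref{pre-sec} gives $\Mult_\phi(\D,M_\ell)=c\,\Mult_\gamma(\D,M_k)$. For the converse $(3)\Rightarrow(1)$, I would first write down the concrete ``$c$-fold diagonal'' embedding $\delta_0\colon M_k\hookr M_\ell$, $a\mapsto\operatorname{diag}(a,\dots,a,0,\dots,0)$ with $c$ copies of $a$ (well defined precisely because $ck\leq\ell$), which has multiplicity $c$; then $\Mult_{\delta_0\circ\gamma}(\D,M_\ell)=c\,\Mult_\gamma(\D,M_k)=\Mult_\phi(\D,M_\ell)$, so $K_0(\delta_0\circ\gamma)=K_0(\phi)$, and by the standard fact above there is a unitary $u\in\widetilde{M_\ell}$ with $\Ad_u\circ(\delta_0\circ\gamma)=\phi$. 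The embedding $\delta:=\Ad_u\circ\delta_0\colon M_k\hookr M_\ell$ then satisfies $\delta\circ\gamma=\phi$.

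I do not anticipate a genuine obstacle: once one grants the $K_0$-classification of homomorphisms out of finite-dimensional $C^*$-algebras, the proof is essentially bookkeeping together with the elementary observation that composing with an embedding of $M_k$ into $M_\ell$ multiplies every multiplicity by a single constant $c\geq 1$ with $ck\leq\ell$. The only points needing a moment's care are that this constant $c$ in $(1)\Rightarrow(3)$ is common to all summands $\D_i$ — automatic, since it depends only on $\delta$, not on $\gamma$ — and that in $(3)\Rightarrow(1)$ the diagonal embedding $\delta_0$ genuinely lands in $M_\ell$ and is injective, both guaranteed by $c\geq 1$ and $ck\leq\ell$.
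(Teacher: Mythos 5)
Your proof is correct and rests on the same two ingredients as the paper's: the $c$-fold diagonal embedding together with the fact that homomorphisms out of a finite-dimensional $C^*$-algebra are determined up to unitary equivalence by their multiplicities (equivalently, by $K_0$), and the observation that a multiplicity mismatch forces norm distance at least $1$. The only difference is organizational — the paper closes the cycle as $(2)\Rightarrow(3)\Rightarrow(1)$ by noting directly that $\|\delta\circ\gamma-\phi\|<1$ forces $\Mult_\phi(\D_i,M_\ell)=\Mult_\delta(M_k,M_\ell)\Mult_\gamma(\D_i,M_k)$, whereas you route $(2)\Rightarrow(1)$ through Lemma \ref{fd-af}, whose proof is that very observation.
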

\begin{proof}
(1) trivially implies (2). To see (2)$\Rightarrow$(3), note that we have $$\Mult_\phi(\D_i, M_\ell)  = \Mult_\delta(M_k, M_\ell) \Mult_\gamma(\D_i, M_k), $$
for every $i\leq l$,  since otherwise  $\|\delta \circ \gamma  - \phi \| \geq 1$. Let  $c = \Mult_\delta(M_k, M_\ell)$.  To see (3)$\Rightarrow$(1), let $\delta': M_k \to M_l$ be the embedding which sends an element of $M_k$ to $c$ many identical copies of it along the diagonal of $M_\ell$. Then we have $K_0(\phi) = K_0(\delta' \circ \gamma)$, by the assumption of (3). Therefore there is a unitary $u$ in $M_\ell$ such that $\Ad_u \circ\delta' \circ \gamma = \phi$. Let $\delta= \Ad_u \circ\delta' $.   
\end{proof}

\section{AF-algebras with  left-invertible connecting maps}\label{LI-section}

Suppose $\A, \B$ are $C^*$-algebras.
A homomorphism $\phi: \B \to \A$ is  \emph{left-invertible} if  there is a (necessarily surjective) homomorphism $\pi: \A \surj \B$ such that $\pi \circ \phi = \id_{\B}$. Clearly a left-invertible homomorphism is necessarily an embedding.

\begin{definition}\label{retract-def}  We say $\B$ is a \emph{retract} of $\A$ if there is a left-invertible embedding from $\B$ into $\A$. 
We say a subalgebra $\B$ of $\A$ is an \emph{inner} retract if and only if there is a homomorphism $\theta: \A \surj \B$ such that $\theta|_{\B} = \id_\B$.
\end{definition}

 The image of a left-invertible embedding $\phi: \B \hookrightarrow \A$ is an inner retract of $\A$. Note that $\B$ is a retract of $\A$ if and only if $\A$ is a split-extension of $\B$.
The next proposition contains some elementary facts about retracts of finite-dimensional $C^*$-algebras and left-invertible maps between them. They follow from elementary facts about finite-dimensional $C^*$-algebras, e.g., matrix algebras are simple.
 \begin{proposition} \label{fact1}  A $C^*$-algebra
$\D$ is a retract of a finite-dimensional $C^*$-algebra $\E$ if and only if 
$\E \cong \D \oplus \F$, for some finite-dimensional $C^*$-algebra $\F$. In other words,  
$\D$ is a retract of $\E$ if and only if $\D$ is isomorphic to an ideal of $\E$.

 Suppose $\phi: \D \hookr \E$ is a (unital) left-invertible embedding and $\pi: \E \surj \D$ is a left inverse of $\phi$. Then
 $\E$ can be written as $\E_0 \oplus \E_1$ and there are $\phi_0, \phi_1$  such that $\phi_0 : \D \to \E_0$ is an isomorphism, $\phi_1: \D \rightarrow \E_1$ is a (unital) homomorphism and 
 \begin{itemize}
     \item $\phi(d)= (\phi_0(d), \phi_1(d))$, for every $d\in \D$,
     \item $\pi(e_0, e_1) = \phi^{ -1}_0(e_0)$, for every $(e_0, e_1) \in \E_0 \oplus \E_1$.
 \end{itemize}
 \end{proposition}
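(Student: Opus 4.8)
The plan is to reduce everything to the structure theory of finite-dimensional $C^*$-algebras, where every homomorphism is, up to inner conjugacy, determined by its multiplicities. First I would prove the characterization of retracts. If $\E \cong \D \oplus \F$, then the inclusion of $\D$ as the first coordinate is a left-invertible embedding with left inverse the coordinate projection, so $\D$ is a retract. Conversely, suppose $\phi : \D \hookr \E$ is left-invertible with left inverse $\pi : \E \surj \D$. Write $\D = \bigoplus_i \D_i$ and $\E = \bigoplus_j \E_j$ as sums of matrix algebras, and let $q_j : \E \surj \E_j$ be the coordinate projections. The key observation is that $e := \phi\circ\pi : \E \to \E$ is an idempotent endomorphism whose image $\phi(\D)$ is isomorphic to $\D$; equivalently $\pi$ restricted to $\phi(\D)$ is an isomorphism onto $\D$. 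I would argue that $\ker\pi$ is an ideal of $\E$, hence a subsum $\bigoplus_{j\in T}\E_j$ for some set $T$ of coordinates (ideals of finite-dimensional algebras are sums of full matrix blocks), and that $\E/\ker\pi \cong \D$ via the map induced by $\pi$; since $\phi$ splits this quotient, $\E \cong \ker\pi \oplus \D$, i.e. $\D$ is isomorphic to the ideal $\bigoplus_{j\notin T}\E_j$.

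Next, for the refined statement about the shape of $\phi$ and $\pi$: set $\E_0 = \bigoplus_{j\notin T}\E_j$ and $\E_1 = \ker\pi = \bigoplus_{j\in T}\E_j$, so $\E = \E_0\oplus\E_1$. Let $\pi_j$ denote the compositions $q_j\circ\phi : \D\to\E_j$. For $j\in T$ we have $q_j(\ker\pi) = \E_j$ and one checks $q_j\circ\phi$ lands in $\ker(\pi\text{-part})$; more precisely, I would show that $\phi(\D)\cap\E_1 = 0$ forces $q_j\circ\phi$ for $j\notin T$ to assemble into an isomorphism $\phi_0 : \D\to\E_0$. Indeed $\pi|_{\E_0}$ is injective (its kernel is $\E_1$) and surjective, hence an isomorphism, and $\pi\circ\phi = \id_\D$ combined with $\phi(d)\in\E_0\oplus\E_1$ gives that the $\E_0$-component of $\phi(d)$ equals $(\pi|_{\E_0})^{-1}(d)$. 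So setting $\phi_0 := (\pi|_{\E_0})^{-1}$, which is an isomorphism $\D\to\E_0$, and $\phi_1 := $ the $\E_1$-component of $\phi$, we get $\phi(d) = (\phi_0(d),\phi_1(d))$ and $\pi(e_0,e_1) = \phi_0^{-1}(e_0)$ directly. For the unital case, note $\phi$ unital means $\phi(1_\D) = 1_\E = (1_{\E_0},1_{\E_1})$, which forces $\phi_0(1_\D) = 1_{\E_0}$ (automatic, as $\phi_0$ is an isomorphism) and $\phi_1(1_\D) = 1_{\E_1}$, i.e. $\phi_1$ is unital.

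The only mild subtlety, and the step I would be most careful about, is verifying that $\ker\pi$ really is a coordinate subsum and that $\pi$ induces an isomorphism $\E/\ker\pi\cong\D$ that is split by $\phi$ — this is where one uses that ideals of finite-dimensional $C^*$-algebras are exactly sums of full matrix blocks (equivalently, $M_k$ is simple), so there is no room for "partial" retracts. Everything else is bookkeeping with coordinate projections. I would phrase the argument so that the identification $\E\cong\D\oplus\F$ with $\F = \E_1$ and the explicit formulas for $\phi$ and $\pi$ drop out of the same decomposition, rather than proving the two bullet points separately.
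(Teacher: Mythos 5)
Your proof is correct and is precisely the elementary argument the paper has in mind: the paper states this proposition without proof, noting only that it follows from basic facts about finite-dimensional $C^*$-algebras (matrix algebras are simple, so $\ker\pi$ is a complemented block-sum ideal and $\pi$ restricts to an isomorphism on the complementary ideal $\E_0$, which yields both the decomposition $\E\cong\D\oplus\F$ and the explicit formulas for $\phi$ and $\pi$ at once). The one garbled clause about $q_j\circ\phi$ landing in ``$\ker(\pi\text{-part})$'' is harmless, since your subsequent clean argument via $\pi|_{\E_0}$ supersedes it.
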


Suppose $(\A_n, \phi_n^m)$ is a  sequence where each connecting map $\phi_n^m: \A_n \hookr \A_m$ is left-invertible. Let $\pi_n^{n+1}: \A_{n+1} \surj \A_n$ be a left inverse of $\phi_n^{n+1}$, for each $n$. For $m> n$ define $\pi_n^m: \A_m \surj \A_n$ by  $\pi_n^m = \pi^{n+1}_n \circ \dots \circ \pi_{m-1}^m$. Then $\pi_n^m$ is a left inverse of $\phi_n^m$ which satisfies $\pi_n^m \circ \pi^k_m = \pi_n^k$, for every $n\leq m\leq k$.
\begin{definition} \label{left-invertible-AF}
 We say $(\A_n, \phi_n^m)$  is a \emph{left-invertible sequence} if  each $\phi_n^m$ is  left-invertible and $\phi_n^n = \id_{\A_n}$. We call $(\pi_n^m)$  a \emph{compatible} left inverse of the left-invertible sequence $(\A_n, \phi_n^m)$ if $\pi_n^m: \A_m \surj \A_n$ are surjective homomorphisms such that $\pi_n^m \circ \pi^k_m = \pi_n^k$ and $\pi_n^m \circ \phi_n^m = \id_{\A_n}$, for every $n\leq m\leq k$. 
 \end{definition}

The following simple lemma is true for arbitrary categories, see~\cite[Lemma 6.2]{Kubis-FS}.
 
 \begin{lemma}\label{reverse-maps}
 Suppose $(\A_n, \phi_n^m)$ is a left-invertible sequence of $C^*$-algebras with a compatible left inverse $(\pi_n^m)$ and $\A =\varinjlim (\A_n, \phi_n^m) $. Then for every $n$ there are surjective homomorphisms $\pi_n^\infty: \A \twoheadrightarrow \A_n$ such that $\pi_n^\infty \circ \phi_n^\infty  = \id_{\A_n}$ and $ \pi_n^m \circ \pi_m^\infty  = \pi_n^\infty$ for each $n\leq m$.
\end{lemma}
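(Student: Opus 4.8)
The plan is to construct the maps $\pi_n^\infty$ directly from the universal property of the direct limit, exploiting the compatibility relations among the $\pi_n^m$. Fix $n$. For every $m \geq n$ we have a surjection $\pi_n^m \colon \A_m \surj \A_n$, and I want to check that the family $(\pi_n^m)_{m \geq n}$ forms a cocone over the truncated sequence $(\A_m, \phi_m^k)_{m \geq n}$ with apex $\A_n$. Concretely, I need $\pi_n^k \circ \phi_m^k = \pi_n^m$ for all $n \leq m \leq k$. This follows from the two defining identities of a compatible left inverse: write $\pi_n^k = \pi_n^m \circ \pi_m^k$ (using $\pi_n^m \circ \pi_m^k = \pi_n^k$), so $\pi_n^k \circ \phi_m^k = \pi_n^m \circ \pi_m^k \circ \phi_m^k = \pi_n^m \circ \id_{\A_m} = \pi_n^m$. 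Hence by the universal property of $\A = \varinjlim(\A_m, \phi_m^k)$ there is a unique homomorphism $\pi_n^\infty \colon \A \to \A_n$ such that $\pi_n^\infty \circ \phi_m^\infty = \pi_n^m$ for every $m \geq n$; in particular, taking $m = n$, $\pi_n^\infty \circ \phi_n^\infty = \pi_n^n = \id_{\A_n}$, which already shows $\pi_n^\infty$ is surjective.

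Next I would verify the compatibility $\pi_n^m \circ \pi_m^\infty = \pi_n^\infty$ for $n \leq m$. Both sides are homomorphisms from $\A$ to $\A_n$, so by the uniqueness clause in the universal property it suffices to check they agree after precomposition with $\phi_k^\infty$ for all $k$ large enough (say $k \geq m$, since the images $\phi_k^\infty(\A_k)$ have dense union in $\A$). For $k \geq m$ we compute $\pi_n^m \circ \pi_m^\infty \circ \phi_k^\infty = \pi_n^m \circ \pi_m^k = \pi_n^k = \pi_n^\infty \circ \phi_k^\infty$, using the defining property of $\pi_m^\infty$, then the cocycle identity for the compatible left inverse, then the defining property of $\pi_n^\infty$. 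Since $\bigcup_k \phi_k^\infty(\A_k)$ is dense in $\A$ and both maps are continuous, they coincide on all of $\A$.

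There is essentially no hard step here: the statement is a purely categorical consequence of the cocone relations, and indeed the excerpt itself flags that the analogous lemma holds in arbitrary categories (\cite[Lemma 6.2]{Kubis-FS}). The only mild point requiring care is that the universal property of an inductive limit of $C^*$-algebras is stated for cocones into $C^*$-algebras with the understanding that continuity is automatic (every $*$-homomorphism between $C^*$-algebras is contractive), so the factoring map $\pi_n^\infty$ is automatically a bona fide $C^*$-algebra homomorphism; one should also note that it is genuinely surjective rather than merely having dense range, which is immediate from $\pi_n^\infty \circ \phi_n^\infty = \id_{\A_n}$. I would therefore present the argument in the two steps above — first construct $\pi_n^\infty$ and establish the retraction identity, then verify the coherence identity via density and uniqueness — and keep it short.
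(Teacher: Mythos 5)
Your proof is correct and is essentially the paper's argument: the authors also define $\pi_n^\infty$ on the dense union $\bigcup_i \phi_i^\infty[\A_i]$ via the compatibility relations and extend by continuity, which is exactly what invoking the universal property of the inductive limit amounts to. The only cosmetic difference is that the paper writes the map down explicitly on all of the dense union (including a clause for $m<n$ using $\phi_m^n$), whereas you work with the cofinal tail $m\geq n$; both are fine.
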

\begin{proof}  First define $\pi_n^\infty$ on $\bigcup_i \phi_{i}^\infty [\A_i]$, which is dense in $\A$. If $a= \phi_m^\infty(a_m)$ for some $m$ and $a_m \in \A_m$, then let
$$
\pi_n^\infty(a) = \begin{cases}
\pi_n^m (a_m) &\mbox{if } n \leq m \\
\phi_m^n(a_m)  &\mbox{if } n >m
\end{cases}
$$
These maps are well-defined (norm-decreasing) homomorphism, so they extend to $\A$ and satisfy the requirements of the lemma. 
\end{proof}
 
 In particular, each $\A_n$ or any retract of it, is a retract of $\A$. The converse of this is also true.
 
  \begin{lemma}\label{sub-retract}
Suppose $(\A_n, \phi_n^m)$ is a left-invertible sequence of finite-dimensional $C^*$-algebras with   $\A =\varinjlim (\A_n, \phi_n^m) $. 
\begin{enumerate} 
\item If $\D$ is a finite-dimensional subalgebra of $\A$, then $\D$ is contained in an  inner retract of $\A$.
\item  If $\D$ is a finite-dimensional  retract of $\A$, then there is $m\in \mathbb N$ such that $\D$ is a  retract of $\A_{m'}$ for every $m'\geq m$. 
\end{enumerate}
 \end{lemma}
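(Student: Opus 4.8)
For part (1), the plan is to combine Lemma \ref{twisting} with Proposition \ref{fact1}. Given a finite-dimensional subalgebra $\D \subseteq \A$, apply Lemma \ref{twisting} with $\epsilon < 1$ to obtain $m\in\mathbb N$ and a unitary $u\in\widetilde\A$ with $u^*\D u \subseteq \A_m$. Now $\A_m$ is itself an inner retract of $\A$: the map $\pi_m^\infty:\A \surj \A_m$ from Lemma \ref{reverse-maps} satisfies $\pi_m^\infty\circ\phi_m^\infty = \id_{\A_m}$, so $\phi_m^\infty[\A_m]$ is an inner retract of $\A$ (its left inverse restricted to it is the identity). Hence $u^*\D u$, sitting inside $\A_m$, is contained in the inner retract $\phi_m^\infty[\A_m]$. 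Conjugating back, $\D = u(u^*\D u)u^* \subseteq u\,\phi_m^\infty[\A_m]\,u^*$, and $\Ad_u$ carries an inner retract to an inner retract (if $\theta$ is a projection onto $\B$ fixing $\B$, then $\Ad_u \circ \theta \circ \Ad_{u^*}$ is a projection onto $u\B u^*$ fixing it). So $\D$ lies in the inner retract $\Ad_u(\phi_m^\infty[\A_m])$ of $\A$.

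For part (2), suppose $\D$ is a finite-dimensional retract of $\A$, witnessed by a left-invertible embedding $\alpha:\D\hookr\A$ with left inverse $\rho:\A\surj\D$. By part (1) (or directly by Lemma \ref{twisting}), after conjugating by a unitary $u$ with $\|1-u\|<1$ we may assume $\Ad_u\circ\alpha$ has image inside some $\A_m$; replacing $\alpha$ by $\Ad_u\circ\alpha$ and $\rho$ by $\rho\circ\Ad_{u^*}$ changes neither the hypothesis nor the conclusion, since $\D$ is a retract of $\A_{m'}$ iff it is isomorphic to an ideal of $\A_{m'}$ (Proposition \ref{fact1}), a property invariant under the isomorphism $\Ad_u$. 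So assume $\alpha[\D]\subseteq\A_m$, i.e. $\alpha = \phi_m^\infty\circ\alpha_m$ for an embedding $\alpha_m:\D\hookr\A_m$. I claim $\alpha_m$ is already left-invertible: the composite $\pi := \rho\circ\phi_m^\infty : \A_m \surj \D$ satisfies $\pi\circ\alpha_m = \rho\circ\phi_m^\infty\circ\alpha_m = \rho\circ\alpha = \id_\D$. Thus $\D$ is a retract of $\A_m$. Finally, for $m'\geq m$, $\A_m$ is a retract of $\A_{m'}$ via $\phi_m^{m'}$ with left inverse $\pi_m^{m'}$, and composing gives that $\D$ is a retract of $\A_{m'}$ for every $m'\geq m$ (retracts compose: if $\pi\circ\alpha_m=\id_\D$ and $\pi_m^{m'}\circ\phi_m^{m'}=\id_{\A_m}$, then $(\pi\circ\pi_m^{m'})\circ(\phi_m^{m'}\circ\alpha_m) = \id_\D$).

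The main point requiring care is not any single step but the bookkeeping in part (2) showing that we may assume the image of $\alpha$ lands in a single $\A_m$ without destroying left-invertibility — i.e. that the left inverse $\rho$ on all of $\A$ can be restricted/precomposed to yield a left inverse of the finitary map $\alpha_m$. This works precisely because $\phi_m^\infty$ is a genuine embedding of $\A_m$ into $\A$ splitting the limit, so $\rho\circ\phi_m^\infty$ is the required left inverse; no finiteness of the index $m$ beyond its existence is needed. One should double-check that conjugating the whole short exact sequence by a unitary of $\widetilde\A$ is legitimate — it is, since $\Ad_u$ is an automorphism of $\A$ and carries split extensions to split extensions.
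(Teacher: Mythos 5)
Your proposal is correct and follows essentially the same route as the paper: part (1) is Lemma \ref{twisting} plus the observation that $u\,\phi_m^\infty[\A_m]\,u^*$ is an inner retract, and part (2) constructs the left inverse of the finitary embedding as $\rho\circ\Ad_{u^*}\circ\phi_m^\infty$, which is exactly the paper's map $\theta(x)=\pi(u\phi_m^\infty(x)u^*)$, followed by composing with $(\phi_m^{m'},\pi_m^{m'})$. The only cosmetic difference is that you absorb the unitary conjugation into a replacement of $\alpha$ and $\rho$ at the outset rather than carrying $u$ through the computation.
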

 \begin{proof} 
 Let $(\pi_n^m)$ be a compatible left inverse of $(\A_n, \phi_n^m)$.
 
 (1) If  $\D$ is a finite-dimensional subalgebra of $\A$, then for some $m\in \mathbb N$ and a unitary $u\in \widetilde \A$, it is contained in  $u \phi_m^\infty [\A_m]u^*$ (Lemma~\ref{twisting}). The latter is an  inner retract of $\A$. 
  
  (2) If $\D$ is a retract of $\A$, there is an embedding $\phi:\D \hookr \A$ with a left inverse  $\pi: \A\twoheadrightarrow\D$.  Find $m$ and a unitary $u$ in $\widetilde \A$ such that $u^* \phi[\D] u \subseteq \phi_m^\infty [\A_m]$. 
    This implies that 
    $$ \phi_m^\infty\circ \pi_m^\infty(u^*\phi(d) u) = u^*\phi(d) u$$
    for every $d\in\D$.
  Define $\psi: \D \hookrightarrow \A_m$ by $\psi(d) = \pi_m^\infty (u^*\phi(d) u)$. 
  Then $\psi$ has a left inverse $\theta: \A_m \twoheadrightarrow \D$ defined by $\theta(x) =\pi(u\phi_m^\infty(x) u^*)$, since for every $d\in\D$ we have 
 $$
 \theta(\psi(d)) = \theta(\pi_m^\infty (u^*\phi(d) u)) =\pi(u \phi_m^\infty\circ \pi_m^\infty(u^*\phi(d) u) u^*)
= \pi(\phi(d))=d.
 $$
 
  Because $\A_{m}$ is a retract of $\A_{m'}$, for every $m' \geq m$, we conclude that $\D$ is also a retract of $\A_{m'}$.
\end{proof}


\begin{remark}\label{nonexamples-remark}
It is not surprising that many AF-algebras are not limits of left-invertible sequences of finite-dimensional $C^*$-algebras. This is because, for instance, such an AF-algebra has infinitely many ideals (unless it is finite-dimensional), and admits finite traces, as it maps onto finite-dimensional $C^*$-algebras. Therefore, for example $\mathcal K(\ell_2)$, the $C^*$-algebra of all compact operators on $\ell_2$, and infinite-dimensional UHF-algebras are not limits of left-invertible sequences of finite-dim\-en\-sio\-nal $C^*$-algebras.
Recall that a $C^*$-algebra is \emph{stable} if its tensor product with $\mathcal K(\ell_2)$ is isomorphic to itself. Blackadar's characterization of stable AF-algebras \cite{Blackadar-AF} (see also \cite[Corollary 1.5.8]{Rordam-Stormer}) states that a separable AF-algebra $\A$ is stable if and only if no nonzero ideal of $\A$ admits a nonzero finite (bounded) trace. Therefore no stable AF-algebra is the limit of a left-invertible sequence of finite-dimensional $C^*$-algebras. 
\end{remark}

The following proposition gives another criteria to distinguish these AF-algebras. For example, it can be directly used to show that infinite-dimensional UHF-algebras are not limits of left-invertible sequences of finite-dimensional $C^*$-algebras.

 \begin{proposition}\label{Theorem III.3.5}
Suppose $\A$ is an AF-algebra isomorphic to the limit of a left-invertible sequence of finite-dimensional $C^*$-algebras and $\A =  \overline{\bigcup_n \B_n}$ for an increasing sequence $(\B_n)$ of finite-dimensional subalgebras. Then there is an increasing sequence $(n_i)$ of natural numbers and an increasing sequence $(\C_{i})$ of finite-dimensional subalgebras of $\A$ such that $\A =  \overline{\bigcup_n \C_n}$ and $\B_{n_i}\subseteq \C_{i} \subseteq \B_{n_{i+1}}$ and $\C_{i}$ is an inner retract of $\C_{i+1}$ for every $i\in \mathbb N$.
 \end{proposition}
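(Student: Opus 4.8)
The plan is to intertwine the two given presentations of $\A$ using a back-and-forth (or rather, up-and-down) argument, reminiscent of the proof of the standard uniqueness/intertwining results for AF-algebras (cf. \cite[Theorem III.3.5]{Davidson}), but tracking retract structure rather than just inclusions. Fix a left-invertible sequence $(\A_n,\phi_n^m)$ with $\A=\varinjlim(\A_n,\phi_n^m)$ and a compatible left inverse $(\pi_n^m)$, so that by Lemma~\ref{reverse-maps} we have surjections $\pi_n^\infty:\A\surj\A_n$ with $\pi_n^\infty\circ\phi_n^\infty=\id_{\A_n}$; in particular each $\phi_n^\infty[\A_n]$ is an inner retract of $\A$. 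We also have the given increasing sequence $(\B_n)$ of finite-dimensional subalgebras with dense union.

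First I would build, recursively, an increasing sequence of indices and a sequence of finite-dimensional subalgebras $\C_i$ of $\A$ satisfying $\B_{n_i}\subseteq\C_i\subseteq\B_{n_{i+1}}$, with each $\C_i$ an inner retract of $\C_{i+1}$. The key step is: given $\C_i$ which is an inner retract of $\A$ (say, conjugate by a unitary to some $\phi_m^\infty[\A_m]$, using Lemma~\ref{twisting} exactly as in Lemma~\ref{sub-retract}(1)), and given a target index requiring $\C_{i+1}\supseteq\B_{n_{i+1}}$, I want to produce a finite-dimensional inner retract $\C_{i+1}$ of $\A$ that contains both $\C_i$ and $\B_{n_{i+1}}$, and such that the retraction of $\A$ onto $\C_i$ factors through $\C_{i+1}$ (so that $\C_i$ is an inner retract of $\C_{i+1}$, not merely of $\A$). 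To do this: by Lemma~\ref{twisting} find a large index $m'$ and a unitary $u'\in\widetilde\A$ with $u'^*(\C_i\cup\B_{n_{i+1}})u'\subseteq\phi_{m'}^\infty[\A_{m'}]$, with $\|1-u'\|$ small; conjugating back, $u'\phi_{m'}^\infty[\A_{m'}]u'^*$ is an inner retract of $\A$ containing $\C_i$ and $\B_{n_{i+1}}$. The subtlety is that we need the \emph{inner} retraction $\A\surj\C_i$ (which exists since $\C_i$ is an inner retract of $\A$) to restrict to a retraction of this new algebra onto $\C_i$; but since $\C_i$ sits inside $u'\phi_{m'}^\infty[\A_{m'}]u'^*$, the composite $u'\phi_{m'}^\infty[\A_{m'}]u'^*\hookr\A\surj\C_i$ is a homomorphism fixing $\C_i$ pointwise, i.e. $\C_i$ is an inner retract of $u'\phi_{m'}^\infty[\A_{m'}]u'^*$. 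Set $\C_{i+1}=u'\phi_{m'}^\infty[\A_{m'}]u'^*$ and choose $n_{i+1}$ so that $\B_{n_i}\subseteq\C_i$ (guaranteed at the previous stage, by arranging $\C_i\supseteq\B_{n_i}$) and then $n_{i+2}$ large enough at the next stage that $\C_{i+1}\subseteq\B_{n_{i+2}}$ — wait, this last containment is the genuinely delicate point.

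The main obstacle is precisely securing the upper containment $\C_i\subseteq\B_{n_{i+1}}$ simultaneously with $\C_i$ being an inner retract of $\A$: the algebras $\phi_m^\infty[\A_m]$ (or unitary conjugates) need not sit inside any $\B_n$. I would handle this by the usual trick of working up to small perturbations and then using Lemma~\ref{fd-af} to absorb them. Concretely, at each stage one chooses the unitary $u'$ with $\|1-u'\|$ so small that $u'\C_i u'^*$ is within distance $<1$ of $\C_i$ and similarly for later algebras, and then, after conjugating the whole tower by a single limit unitary $w=\lim\cdots u_2' u_1'$ (whose partial products converge because the $\|1-u_i'\|$ are summably small), one replaces $\C_i$ by $w^*\C_i w$; a further appeal to Lemma~\ref{twisting} or a direct finite-dimensional argument shows $w^*\C_i w$ can be taken inside an appropriate $\B_{n_{i+1}}$. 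Since $\A=\overline{\bigcup\B_n}=\overline{\bigcup\phi_m^\infty[\A_m]}$ and the $\C_i$ exhaust a dense subset, $\A=\overline{\bigcup_i\C_i}$, and the inner-retract relations $\C_i\subseteq_{\mathrm{retract}}\C_{i+1}$ are preserved under conjugation by $w$. This completes the construction; the finite-dimensionality of all algebras involved is what makes the perturbation arguments (via Lemmas~\ref{twisting} and~\ref{fd-af}) go through cleanly.
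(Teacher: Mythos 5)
Your overall strategy is the paper's strategy in spirit: intertwine the given tower $(\B_n)$ with the images $\phi_m^\infty[\A_m]$ of the left-invertible tower, and observe that the retract structure then comes for free. Indeed, your remark that an inner retract of $\A$ is automatically an inner retract of any intermediate finite-dimensional subalgebra (restrict the retraction $\A\surj\C_i$ to $\C_{i+1}$) is exactly why no extra ``tracking of retract structure'' is needed. The paper gets the interleaving in one line by applying Davidson's intertwining theorem \cite[Theorem III.3.5]{Davidson} --- which you cite but do not actually use --- to the two sequences $(\B_n)$ and $(\phi_n^\infty[\A_n])$: it produces a \emph{single} unitary $u\in\widetilde\A$ and indices with $\B_{n_i}\subseteq u^*\phi_{m_i}^\infty[\A_{m_i}]u\subseteq\B_{n_{i+1}}$, and setting $\C_i=u^*\phi_{m_i}^\infty[\A_{m_i}]u$ finishes, since conjugation by the one fixed $u$ preserves the inner-retract relation between consecutive algebras $\phi_{m_i}^\infty[\A_{m_i}]$ (witnessed by $\phi_{m_i}^\infty\circ\pi_{m_i}^{m_{i+1}}$ and Lemma \ref{reverse-maps}).

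Your substitute for the interleaving step --- the only real content of the proposition --- has a genuine gap. You propose to choose unitaries $u_i'$ with $\|1-u_i'\|$ summably small, conjugate the whole tower of $\C_i$'s by the limit unitary $w$, and then claim $w^*\C_i w$ ``can be taken inside an appropriate $\B_{n_{i+1}}$'' via Lemma \ref{twisting} or Lemma \ref{fd-af}. Two problems. First, the $\B_n$ are fixed data and are not conjugated, so replacing $\C_i$ by $w^*\C_i w$ destroys the lower containments $\B_{n_i}\subseteq\C_i$ secured at earlier stages just as surely as it might repair the upper ones; a subalgebra within distance $\epsilon$ of one contained in $\B_{n_{i+1}}$ need not itself be contained in $\B_{n_{i+1}}$. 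Second, Lemma \ref{fd-af} gives unitary equivalence of two homomorphisms at distance $<1$, which is weaker than the literal containment the statement asserts. The missing ingredient is the standard refinement underlying Davidson's proof: at each stage of the recursion the twisting unitary can be chosen in the relative commutant of the algebra already placed, so that all earlier containments are preserved exactly, and the partial products of these unitaries converge to the single $u$. Either supply that refinement or, better, simply invoke \cite[Theorem III.3.5]{Davidson} as a black box, as the paper does.
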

 \begin{proof}
 Suppose $\A$ is the limit of a left-invertible direct sequence $(\A_n, \phi_n^m)$ of finite-dimensional $C^*$-algebras.
Theorem III.3.5  of \cite{Davidson}, applied to sequences $(\B_n)$ and $(\phi_n^\infty[\A_n])$, shows that there are sequences $(n_i)$, $(m_i)$ of natural numbers and a unitary $u\in \widetilde A$ such that 
$$\B_{n_i}\subseteq u^*\phi_{m_i}^\infty[\A_{m_i}]u \subseteq \B_{n_{i+1}}$$
for every $i\in \mathbb N$.
Let $\C_i =  u^*\phi_{m_i}^\infty[\A_{m_i}]u $.
 \end{proof}

However, the next proposition shows that any AF-algebra is a quotient of an AF-algebra which is the limit of a left-invertible sequence of finite-dimensional $C^*$-algebras.

\begin{proposition}\label{ess-quotient}
 For every  (unital) AF-algebra $\B$ there is a  (unital) AF-algebra $\A\supseteq \B$ which is the limit of a (unital) left-invertible sequence of finite-dimensional $C^*$-algebras and $\A/\J\cong \B$ for an  essential ideal $\J$ of $\A$.
\end{proposition}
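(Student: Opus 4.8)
The plan is to realize $\B$ as a quotient of a carefully constructed limit. Write $\B = \varinjlim(\B_n, \psi_n^m)$ with each $\B_n$ finite-dimensional and each $\psi_n^{n+1}\colon \B_n \to \B_{n+1}$ an embedding (not necessarily left-invertible); in the unital case take all maps unital. The idea is to ``correct'' each connecting map so that it becomes left-invertible by adjoining a complementary summand, at the cost of enlarging the algebra, and then to show that the accumulated extra summands assemble into an essential ideal that one can quotient out to recover $\B$. Concretely, I would set $\A_1 = \B_1$ and, having built $\A_n$ together with a surjection $\rho_n\colon \A_n \surj \B_n$ that is a left inverse of an inclusion $\B_n \hookrightarrow \A_n$, build $\A_{n+1}$ by factoring $\psi_n^{n+1}$ (via Proposition \ref{fact1} applied to $\psi_n^{n+1}$ after absorbing the already-present summands) as a left-invertible embedding into $\B_{n+1}\oplus \F_{n+1}$ for a suitable finite-dimensional $\F_{n+1}$, and then amalgamating: $\A_{n+1} := \A_n' \oplus (\B_{n+1}\oplus \F_{n+1})$ where the connecting map $\phi_n^{n+1}\colon \A_n \to \A_{n+1}$ is designed to be left-invertible by sending each matrix-summand of $\A_n$ diagonally both into its ``descendant'' in $\B_{n+1}\oplus\F_{n+1}$ and, with an isomorphic copy, into a reserved block, exactly as in the Cantor-type constructions used elsewhere in the paper. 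The left inverses $\pi_n^{n+1}$ are then read off from Proposition \ref{fact1}, and one takes $\A = \varinjlim(\A_n, \phi_n^m)$.

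The second ingredient is to identify the quotient. By construction each $\rho_n$ should be compatible with the $\phi_n^m$ and the $\psi_n^m$, i.e. the square relating $\phi_n^{n+1}$, $\rho_n$, $\rho_{n+1}$ and $\psi_n^{n+1}$ commutes; then the $\rho_n$ induce a surjection $\rho\colon \A \surj \B$. Dually the inclusions $\B_n\hookrightarrow\A_n$ induce an embedding $\B \hookrightarrow \A$ splitting $\rho$, so $\B$ is even a retract of $\A$, and in particular $\A \supseteq \B$ as claimed. It remains to check $\J := \ker\rho$ is essential. Using the description of ideals of AF-algebras via directed hereditary subsets of the Bratteli diagram recalled in Section \ref{pre-sec}, $\J$ is essential iff every node $(n,s)$ of the diagram of $\A$ connects forward into the sub-diagram $\mathfrak J$ of $\J$; this is where the diagonal-into-a-reserved-block device pays off, since every summand of $\A_n$ — whether a ``$\B$-summand'' or an ``$\F$-summand'' — gets routed, at the next stage, into a block that is eventually killed by some $\rho_m$ (the $\F$-summands are never in the image of the splitting $\B\hookrightarrow\A$, and the duplicated copies of the $\B$-summands likewise lie in $\ker\rho_m$ for large $m$). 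So $\mathfrak J$ meets the forward cone of every node and $\J$ is essential.

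The main obstacle I anticipate is the bookkeeping in the inductive step: one must factor each $\psi_n^{n+1}$ through a left-invertible embedding while simultaneously keeping the maps $\rho_n$ coherent and ensuring that \emph{every} summand (not just the newly created ones) is eventually absorbed into $\ker\rho$, so that $\J$ comes out essential rather than merely nonzero. Handling the unital case adds a further wrinkle: all the auxiliary summands $\F_{n+1}$ and the diagonal embeddings must be arranged so that $\phi_n^{n+1}(1_{\A_n}) = 1_{\A_{n+1}}$ and $\rho_n$ is unital, which constrains the choice of complementary summands in Proposition \ref{fact1}; but since that proposition already supplies the unital decomposition $\E = \E_0 \oplus \E_1$ with $\phi_0$ an isomorphism, this is a matter of care rather than a genuine difficulty. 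Everything else — that $\A$ is AF, that the $\phi_n^m$ are left-invertible, that $\rho$ is surjective — is immediate from the construction and the lemmas already in the excerpt.
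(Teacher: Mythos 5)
Your construction is, at its core, the same as the paper's: the paper takes $\A_m=\B_m\oplus\B_{m-1}\oplus\dots\oplus\B_1$ with connecting map $(b_m,\dots,b_1)\mapsto(\psi_m^{m+1}(b_m),b_m,b_{m-1},\dots,b_1)$, i.e.\ it keeps an identical copy of each stage alive forever; these copies form a directed hereditary subdiagram meeting the forward cone of every node, hence an essential ideal $\J$, and $\A/\J\cong\B$. Your ``reserved block'' device is exactly this, and the auxiliary summands $\F_{n+1}$ and the factoring of $\psi_n^{n+1}$ through a left-invertible embedding are unnecessary: one does not need to make $\psi_n^{n+1}$ itself left-invertible, only the new connecting map $\A_n\to\A_{n+1}$, and adjoining the full copy of $\A_n$ already achieves that (the left inverse is the coordinate projection killing the top summand). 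The parts of your argument that are actually needed downstream --- left-invertibility, essentiality via the Bratteli-diagram criterion, and the identification of the quotient --- are correct and agree with the paper's proof.

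The genuine gap is your claim that the inclusions $\B_n\hookr\A_n$ induce an embedding $\B\hookr\A$ splitting $\rho$, so that $\B$ is a retract of $\A$. The relevant squares do not commute: if $\iota_n\colon\B_n\to\A_n$ is the top-summand inclusion, then $\phi_n^{n+1}(\iota_n(b))$ equals $\iota_{n+1}(\psi_n^{n+1}(b))$ \emph{plus} the reserved-block copy of $b$; the squares commute only modulo $\J$, which is precisely why $\B$ is a quotient of $\A$ and not, in general, a subalgebra. Indeed no embedding of $\B$ into $\A$ need exist: take $\B=M_{2^\infty}=\varinjlim M_{2^k}$. Then $K_0(\A)$ is an increasing union of images of $K_0(\A_m)\cong\mathbb Z^m$ in which the lower coordinates of an element are preserved by all further connecting maps, so $K_0(\A)$ has no nonzero infinitely $2$-divisible element; hence every positive homomorphism $\mathbb Z[1/2]\to K_0(\A)$ is zero, and an embedding $M_{2^\infty}\hookr\A$ would send the unit to a nonzero projection whose $K_0$-class vanishes, which is impossible in an AF-algebra. (The paper's own proof also asserts that ``$\A$ contains $\B$'' without argument, and the same example shows that this assertion should be weakened to ``$\A$ maps onto $\B$''; fortunately only the surjection with essential kernel is ever used, in Theorem \ref{universal AF-algebra} and its unital analogue.) You should therefore drop the retract claim rather than try to repair it.
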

\begin{proof}
Suppose $\B$ is the limit of the sequence $(\B_n, \psi_n^m)$ of finite-dimensional $C^*$-algebras and  homomorphisms. Let $\A$ denote the limit of the following diagram:
\begin{equation*}\label{diag2}
\begin{tikzcd}[row sep=small, column sep=large]
\B_1 \arrow[r,  "\psi_1^2"] \arrow[rd, "id"] 
& \B_{2} \arrow[rd, "id"]  \arrow[r,  "\psi_{2}^{3}"]  & \B_{3} \arrow[rd,  "id"] \arrow[r,  "\psi_{3}^{4}"] & \B_{4} \,  {\dots \atop \begin{turn}{-30} \dots \end{turn}}\\
& \B_{1} \arrow[rd, "id"]  & \B_{2} \arrow[rd, "id"] & \B_{3} \,  \begin{turn}{-30} \dots \end{turn}& \\   
& & \B_{1} \arrow[rd, "id"]  &   \B_{2} \,  \begin{turn}{-30} \dots \end{turn}& \\
&&& \B_1 \,  \begin{turn}{-30} \dots \end{turn}&  \\
 & & & & \null
\end{tikzcd}
\end{equation*}

Then $\A$ is an AF-algebra which contains $\B$ and  the connecting maps are  left-invertible embeddings. The ideal $\J$ corresponding to the (directed and hereditary) subdiagram of the above diagram which contains all the nodes except the ones on the top line is essential and clearly $\A/\J \cong \B$.
\end{proof}

\section{AF-algebras with the Cantor property} \label{CP-section}
 
We define the notion of the ``Cantor property" for an AF-algebra. These algebras have properties which are, in a sense, generalizations of the ones satisfied (some trivially) by $C(2^\mathbb N)$.  It is easier to state these properties using the notation for Bratteli diagrams that we fixed in Section \ref{pre-sec}.
For example, every node of the
Bratteli diagram of $C(2^\mathbb N)$ splits in two, which here is generalized to ``each node splits into at least two nodes with the same dimension at some further stage", which of course guarantees that there are no minimal projections in the limit algebra.
\begin{definition}\label{Bratteli}
We say an AF-algebra $\A$ has \emph{the Cantor property} if  there is a sequence $(\A_n, \phi_n^m)$  of finite-dimensional $C^*$-algebras and embeddings such that $\A = \varinjlim (\A_n, \phi_n^m)$ and the Bratteli diagram  $\mathfrak D$  of $(\A_n, \phi_n^m)$ has the following properties:
 \begin{enumerate}
  
\item[(D0)] For every $(n,s)\in \mathfrak D$ there is $(n+1,t)\in \mathfrak D$ such that $\dim(n,s) = \dim(n+1,t)$ and  $(n,s)\to (n+1,t)$.
\item[(D1)] For every $(n,s)\in \mathfrak D$ there are distinct nodes $(m,t),(m,t')\in \mathfrak D$, for some $m>n$, such that $\dim(n,s) = \dim(m,t) = \dim(m,t')$ and  $(n,s)\to (m,t)$ and $(n,s) \to (m,t')$.
\item[(D2)]  For every $(n,s_1), \dots, (n,s_k), (n', s')\in \mathfrak D$ and $\{x_1, \dots , x_k\}\subseteq \mathbb N$ such that $\sum_{i=1}^k x_i\dim(n, s_i) \leq \dim(n', s')$, there is $m\geq n$ such that for some $(m,t)\in \mathfrak D$  we have $\dim(m,t) = \dim(n',s')$ and there are exactly  $x_i$ distinct paths from $(n, s_i)$ to $(m,t)$ in $\mathfrak D$.
 \end{enumerate}
\end{definition}
The Bratteli diagram of $C(2^\mathbb N)$ trivially satisfies these conditions and therefore $C(2^\mathbb N)$ has the Cantor property.
 \begin{remark}{\label{remark-CP-def}} Condition (D0) states that
  $(\A_n, \phi_n^m)$ is a left-invertible sequence.
Dropping  (D0) from Definition \ref{Bratteli} does not change the definition (i.e., $\A$ has the Cantor property if and only if it has a representing sequence satisfying (D1) and (D2)). This is because (D1) alone implies the existence of a left-invertible sequence with limit $\A$ that still satisfies (D1) and (D2).   In fact, $\A$ has the Cantor property if and only if any representing sequence satisfies (D1) and (D2). However, we add (D0) for simplicity to make sure that $(\A_n, \phi_n^m)$ is already a left-invertible sequence, since, as we shall see later, being the limit of a left-invertible direct sequence of finite-dimensional $C^*$-algebras is a crucial and helpful property of AF-algebras with the Cantor property. 
 Condition (D2) can be rewritten as 
\begin{enumerate}
\item[(D$2'$)] For every ideal $\D$ of some $\A_n$,  if $M_\ell$ is a retract of $\A$ and  $\gamma : \D \hookr M_\ell$ is an embedding, then  there is $m\geq n$ and  $\A_{m,t}\subseteq \A_m$ such that $\A_{m,t} \cong M_\ell$  and  $\Mult_{\phi_n^m}(\D , \A_{m,t} )  =  \Mult_{\gamma}(\D, M_\ell)$.
 \end{enumerate}
\end{remark}

Definition \ref{Bratteli} may be adjusted for unital AF-algebras where all the maps are considered to be unital.
 \begin{definition}\label{CS-unital-def}
A unital AF-algebra $\A$ has the Cantor property if and only if  it satisfies the conditions of  Definition \ref{Bratteli}, where $\phi_n^m$ are unital and in condition (D2) the inequality $\sum_{i=1}^k x_i \dim(n, s_i) \leq \dim(n', s')$ is replaced with equality.
 \end{definition}

 \begin{proposition}\label{sum-retract}
 Suppose $\A$  is an AF-algebra with  Cantor property. If $\D, \E$ are finite-dimensional retracts of $\A$, then so is $\D\oplus \E$.
 \end{proposition}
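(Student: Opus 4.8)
The plan is to use the characterization of finite-dimensional retracts of $\A$ together with the combinatorial condition (D2) encoded in the Bratteli diagram. By Lemma~\ref{sub-retract}(2), since $\D$ and $\E$ are finite-dimensional retracts of $\A$, there is an index $n$ such that both $\D$ and $\E$ are retracts of $\A_n$; by Proposition~\ref{fact1} this means $\A_n$ decomposes as $\A_n \cong \D \oplus \F$ and also $\A_n \cong \E \oplus \F'$ for suitable finite-dimensional $\F, \F'$. Hence each matrix summand of $\D$ and each matrix summand of $\E$ appears as an ideal (direct-sum component) of $\A_n$, i.e. corresponds to a node of the Bratteli diagram $\mathfrak D$ at level $n$. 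Write $\D \cong M_{p_1} \oplus \dots \oplus M_{p_a}$ and $\E \cong M_{q_1} \oplus \dots \oplus M_{q_b}$, with the corresponding nodes $(n,s_1),\dots,(n,s_a)$ for $\D$ and $(n,u_1),\dots,(n,u_b)$ for $\E$.

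The main step is to realize $\D \oplus \E$ as a retract of some $\A_m$, which by Lemma~\ref{reverse-maps} (applied to $\A_m$, which is a retract of $\A$) then makes it a retract of $\A$. For this I would show that it suffices to find, for each matrix summand $M_r$ of $\D\oplus\E$, a node $(m,t)$ of $\mathfrak D$ with $\dim(m,t)=r$ to which the relevant node at level $n$ connects with multiplicity exactly $1$ and which is ``disjoint'' from the images of the other summands — so that reassembling these target nodes exhibits $\D\oplus\E$ as an ideal of $\A_m$. Concretely, applying condition (D1) (or iterating it) to each node $(n,s_i)$ and each $(n,u_j)$ separately, we can pass to a common later level where each of these $a+b$ nodes has a ``clone'' of the same dimension reached by a single path, and — this is where a little care is needed — we must arrange that these $a+b$ clones are pairwise distinct nodes of $\mathfrak D$. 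That distinctness, and the fact that the single-path condition makes the restricted connecting map an isomorphism onto that summand, is exactly what Proposition~\ref{fact1} needs to conclude that $M_{p_1}\oplus\dots\oplus M_{p_a}\oplus M_{q_1}\oplus\dots\oplus M_{q_b}\cong \D\oplus\E$ is isomorphic to an ideal of $\A_m$, hence a retract of $\A_m$, hence a retract of $\A$.

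The obstacle I anticipate is precisely guaranteeing pairwise distinctness of the $a+b$ target nodes while keeping each reached by exactly one path: a naive repeated application of (D1) to one node at a time does not immediately control what happens to the other nodes, and two different source nodes could a priori be forced toward the same target. I would handle this by invoking (D2) in the form (D$2'$): take $\D'=\A_n$ (or the ideal $\D$ of $\A_n$) and observe that $\D\oplus\E$, being a sum of matrix algebras each of which is already a retract of $\A$, embeds into $\A$; more usefully, for each summand $M_r$ of $\D\oplus\E$ one applies (D2) with the tuple of multiplicities that is $1$ on the appropriate source node and $0$ elsewhere, together with the dimension inequality (which holds since $r$ is the dimension of an ideal of $\A_n$), to produce a node of the right dimension reached with the prescribed multiplicity pattern; doing this at a common sufficiently large level and using that (D1)-type splitting provides enough room, one separates the targets. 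Then a final appeal to Lemma~\ref{twisting}/Lemma~\ref{sub-retract} packages everything: $\D\oplus\E$ sits as an ideal of some $\A_m$, and ideals of $\A_m$ are retracts of $\A_m$ which are retracts of $\A$.
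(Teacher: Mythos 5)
Your proposal is correct and follows essentially the same route as the paper: use Lemma~\ref{sub-retract} to place $\D$ and $\E$ as retracts (ideals) of a common $\A_m$, then use (D1) to push forward to a level where the matrix summands occupy pairwise distinct (orthogonal) nodes, so that $\A_m \cong \D\oplus\E\oplus\F$ and hence $\D\oplus\E$ is a retract of $\A_m$ and of $\A$. The distinctness issue you flag is real but is handled in the paper exactly as you suggest, by iterating (D1) and enlarging $m$; the detour through (D2) is not needed.
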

\begin{proof}
Suppose $\D= \D_1 \oplus \D_2 \oplus \dots \oplus \D_l$ and $\E= \E_1 \oplus \E_2 \oplus \dots \oplus \E_k$, where $\D_i,\E_i$ are isomorphic to  matrix algebras. By Lemma \ref{sub-retract} both $\D$ and $\E$ are retracts of  some $\A_m$, which means that all $\D_i$ and $\E_i$ appear in $\A_m$ as  retracts (ideals). By (D1) and enlarging $m$, if necessary, we can make sure these retracts in $\A_m$ are orthogonal, meaning that $\A_m \cong \D \oplus \E \oplus \F$, for some finite-dimensional $C^*$-algebra $\F$. Therefore $\D \oplus \E$ is a retract of $\A_m$ and as a result, it is a retract of $\A$.
\end{proof}

 \begin{lemma}\label{absorbing}
 Suppose $\A$ is an AF-algebra with  the Cantor property, witnessed by $(\A_n, \phi_n^m)$ satisfying Definition \ref{Bratteli} and $\E$ is a finite-dimensional  retract of $\A$. If $\gamma: \A_n \hookr \E$ is a left-invertible embedding then there are $m\geq n$ and a left-invertible embedding $\delta: \E \hookr \A_m$ such that $\delta\circ  \gamma = \phi_n^m$.
 \end{lemma}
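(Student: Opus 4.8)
The plan is to realize $\E$ as a retract of some $\A_m$ in a way that is compatible with the given embedding $\gamma \colon \A_n \hookr \E$, and then patch up the diagram using the uniqueness results for finite-dimensional $C^*$-algebra homomorphisms (Lemma \ref{fd-af} and Lemma \ref{matrix-absorbing}). Write $\E = \E_1 \oplus \dots \oplus \E_k$ with each $\E_j$ a matrix algebra, and write $\A_n = \A_{n,1} \oplus \dots \oplus \A_{n,\ell}$. Since $\gamma$ is left-invertible, Proposition \ref{fact1} tells us that each ideal $\A_{n,s}$ of $\A_n$ maps faithfully onto a direct-sum component of $\E$, and the map $\A_{n,s} \hookr \E_j$ has multiplicity $\Mult_\gamma(\A_{n,s}, \E_j)$. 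Because $\gamma$ is left-invertible, for each $s$ there is exactly one $j = j(s)$ with $\Mult_\gamma(\A_{n,s}, \E_{j(s)}) = 1$ and moreover $\dim(\A_{n,s}) = \dim(\E_{j(s)})$ in that case; this is the ``$\E_0$'' part of Proposition \ref{fact1}.

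First I would handle each component $\E_j$ of $\E$ separately. Fix $j$; the inclusion $\gamma$ followed by the projection onto $\E_j$ gives an embedding $\gamma_j \colon \A_n \hookr \E_j \cong M_{\dim \E_j}$ with multiplicity tuple $\Mult_{\gamma}(\A_n, \E_j) = (x_1^j, \dots, x_\ell^j) \in \mathbb N^\ell$, and $\sum_s x_s^j \dim(\A_{n,s}) \le \dim \E_j$ (with equality precisely when $\gamma_j$ is unital into $M_{\dim \E_j}$; in general $\gamma$ being only left-invertible forces $\sum_s x_s^j \dim(\A_{n,s}) = \dim \E_j$ only for the unique component isomorphic to some $\A_{n,s}$, but the inequality always holds). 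Now apply condition (D2) (in the form (D$2'$), noting $\E_j$ is a retract of $\A$ since $\E$ is and components of retracts are retracts): there is $m_j \ge n$ and a node $(m_j, t_j)$ with $\dim(m_j, t_j) = \dim \E_j$ and exactly $x_s^j$ paths from $(n,s)$ to $(m_j, t_j)$ for each $s$, i.e.\ $\Mult_{\phi_n^{m_j}}(\A_n, \A_{m_j, t_j}) = \Mult_{\gamma_j}(\A_n, \E_j)$. Taking $m = \max_j m_j$ and using (D0) to carry the relevant nodes along to level $m$ (preserving dimensions and multiplicities), I get for every $j$ a node $(m, t_j)$ with $\A_{m, t_j} \cong \E_j$ and $\Mult_{\phi_n^m}(\A_n, \A_{m,t_j}) = \Mult_\gamma(\A_n, \E_j)$.

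Next I need the nodes $(m, t_1), \dots, (m, t_k)$ to be \emph{distinct}, so that $\E_1 \oplus \dots \oplus \E_k$ embeds as an orthogonal direct sum of ideals of $\A_m$. This is exactly the device used in the proof of Proposition \ref{sum-retract}: apply (D1) (finitely many times, enlarging $m$ further) to split any node that was being used for two different $\E_j$'s into two nodes of the same dimension, so that after passing to a larger level the components can be chosen pairwise distinct. At that point $\A_m \cong \E \oplus \F$ for some finite-dimensional $\F$, with the isomorphism $\E \cong \bigoplus_j \A_{m, t_j}$ identifying $\E_j$ with $\A_{m, t_j}$, and we have a left-invertible embedding $\delta_0 \colon \E \hookr \A_m$ (the inclusion of the ideal $\bigoplus_j \A_{m,t_j}$, whose left inverse is the coordinate projection) with $\Mult_{\delta_0 \circ \gamma}(\A_n, \A_{m,t_j}) = \Mult_{\phi_n^m}(\A_n, \A_{m,t_j})$ for every $j$. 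Hence $K_0(\delta_0 \circ \gamma) = K_0(\phi_n^m)$ as maps $K_0(\A_n) \to K_0(\A_m)$, since $K_0$ of a homomorphism between finite-dimensional algebras is determined by its multiplicities into each component, and into the $\F$-components both sides are zero.

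Finally, since $\delta_0 \circ \gamma$ and $\phi_n^m$ are homomorphisms $\A_n \to \A_m$ with equal $K_0$, Lemma \ref{fd-af} (or directly \cite[Lemma 7.3.2]{K-theory Rordam}) gives a unitary $u \in \widetilde{\A_m}$ with $\Ad_u \circ \delta_0 \circ \gamma = \phi_n^m$. Put $\delta = \Ad_u \circ \delta_0$; conjugation by a unitary preserves left-invertibility (the left inverse becomes $\pi \circ \Ad_{u^*}$), so $\delta \colon \E \hookr \A_m$ is a left-invertible embedding with $\delta \circ \gamma = \phi_n^m$, as required. The main obstacle, and the step requiring the most care, is the bookkeeping in the middle: ensuring that a single level $m$ can be chosen that simultaneously works for all components $\E_j$ \emph{and} makes the target nodes distinct, which is where conditions (D0), (D1), (D2) must be invoked in the right order; everything after that ($K_0$-comparison plus the unitary correction) is routine given the lemmas already in the excerpt. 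In the unital case, one uses Definition \ref{CS-unital-def} (with the equality in (D2)) in place of (D2), and the same argument yields a unital left-invertible $\delta$.
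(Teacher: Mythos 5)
Your overall strategy (apply (D2) componentwise, merge the levels via (D0)/(D1), then correct by a unitary using Lemma \ref{fd-af}) starts out parallel to the paper's, but the final step has a genuine gap that the unitary trick cannot repair. You take $\delta_0\colon\E\hookr\A_m$ to be the inclusion of the ideal $\bigoplus_j\A_{m,t_j}$ and claim $K_0(\delta_0\circ\gamma)=K_0(\phi_n^m)$ because ``into the $\F$-components both sides are zero.'' That is false for $\phi_n^m$: it is a fixed connecting map of the given sequence and, being an embedding whose nodes split by (D1), it generally has nonzero multiplicity into many components of $\A_m$ outside $\bigoplus_j\A_{m,t_j}$. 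Since $\delta_0\circ\gamma$ lands entirely inside that ideal, its multiplicities into the remaining components are all zero, so $K_0(\delta_0\circ\gamma)\neq K_0(\phi_n^m)$ in general, and no unitary conjugation can then give $\delta\circ\gamma=\phi_n^m$. This is exactly where the paper's proof does extra work: besides the isomorphisms $\delta_i\colon\E_i\to\A_{m,s_i}$, it introduces additional homomorphisms $\hat\delta_j$ sending $e\in\E_{k(j)}$ to the compression of $\phi_n^m(\hat\gamma_j^{-1}(e))$ by $1_m-\sum_{i\in X_j}q_i$, where $\hat\gamma_j\colon\A_{n,j}\to\E_{k(j)}$ are the isomorphisms supplied by the left-invertibility of $\gamma$; the sum of the $\delta_i$ and the $\hat\delta_j$ then reproduces $\phi_n^m$ exactly, including its part outside the distinguished copies of the $\E_j$. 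Without some such device your $\delta$ cannot account for that part of $\phi_n^m$.

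A smaller, fixable inaccuracy: $\pi_j\circ\gamma\colon\A_n\to\E_j$ is not an embedding in general, since the summands of $\A_n$ not meeting $\E_j$ are annihilated. To invoke (D2) (equivalently (D$2'$), which requires an embedding of an ideal of $\A_n$, with all multiplicities at least $1$) you must first restrict to $\A_{n,Y_j}=\bigoplus_{s\in Y_j}\A_{n,s}$, where $Y_j=\{s:\gamma[\A_{n,s}]\cap\E_j\neq0\}$, as the paper does.
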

 \begin{proof} 
Suppose    $\A_n  = \A_{n,1} \oplus \dots \oplus \A_{n,l}$ and  $\E= \E_1 \oplus \E_2 \oplus \dots \oplus \E_k$ where $\E_i$ and $\A_{n,j}$ are all matrix algebras. Let $\pi_i$ denote the canonical projection from $\E$ onto $\E_i$. For every $i\leq k $ put
$$Y_i = \{j \leq l :\gamma[\A_{n,j}]\cap \E_i \neq 0\},$$
and let $\A_{n,Y_i} = \bigoplus_{j\in Y_i} \A_{n,j}$.
Then $\A_{n,Y_i}$ is an ideal (a retract) of $\A_n$ and the map $\gamma_i : \A_{n,Y_i} \hookr \E_i$, the restriction of $\gamma$ to $\A_{n,Y_i}$ composed with $\pi_i$,  is an embedding. Since $\E$ is a finite-dimensional retract of $\A$, it is a retract of some $\A_{n^\prime}$ (Lemma \ref{sub-retract}). So each $\E_i$ is a retract of $\A_{n^\prime}$.  By applying (D2) for each $i\leq k$ there are $m_i\geq n $ and $(m_i,t_{i})  \in \mathfrak D$ such that $\dim(m_i, t_{i}) = \dim(\E_i)$ and 
$\Mult_{\phi_n^{m_i}}(\A_{n,Y_i} , \A_{m_i, t_i}) =  \Mult_{\gamma_i}(\A_{n,Y_i}, \E_i)$.  Let $m =  \max\{m_i : i\leq k\}$ and by (D0)  find $(m,s_i)$  such that $\dim(m_i, t_i) = \dim(m,s_i)$ and $(m_i, t_i) \to (m,s_i)$. Applying (D1)  and possibly increasing $m$ allows us to make sure that $(m,s_i) \neq (m,s_j)$ for distinct $i,j$ and therefore   $\A_{m, s_{i}} $ are pairwise orthogonal.  Then $\{\A_{m,s_i} : i\leq k\}$ is a sequence of pairwise orthogonal subalgebras (retracts) of $\A_m$ such that  $\A_{m,s_i} \cong \E_i$ and  $$\Mult_{\phi_n^{m}}(\A_{n,Y_i} , \A_{m,s_i}) = \Mult_{\gamma_i}(\A_{n,Y_i}, \E_i).$$
 By Lemma \ref{matrix-absorbing} 
  there are isomorphisms $\delta_i : \E_i \hookr \A_{m,s_i}$ such that $\gamma_i \circ \delta_i $ is equal to the restriction of $\phi_n^m$ to $\A_{n,Y_i}$ projected onto $\A_{m,s_i}$.     
  
  Suppose $1_m$ is the unit of $\A_m$ and $q_i$ is the unit of $\A_{m,s_i}$.   Each $q_i$ is a central projection of $\A_m$, because  $\A_{m,s_i}$ are ideals of $\A_m$.
 Since $\gamma$ is left-invertible,  for each $j\leq l$ there is $k(j) \leq k$ such that $\A_{n,j}\cong  \E_{k(j)}$ and  $\hat\gamma_{j} = \pi_{k(j)}\circ \gamma|_{\A_{n,j}}$ is an isomorphism. Also for $j\leq l$ let 
   $$X_j = \{i\leq k : \gamma[\A_{n,j}] \cap \E_i \neq 0\}.$$
   Note that 
   \begin{enumerate}
\item   $k(j) \in X_j$,
  \item $k(j') \notin X_j$ if $j\neq j'$,
  \item $i\in X_j \Leftrightarrow j\in Y_i$.
  \end{enumerate}
  Let $\hat\delta_j:  \E_{k(j)} \to (1_m -  \sum_{i\in X_j} q_i) \A_m (1_m - \sum_{i\in X_j} q_i)$ be the  homomorphism defined by 
   $$\hat\delta_j (e)  =  (1_m - \sum_{i\in X_j}q_i)\phi_n^m(\hat\gamma_j^{-1}(e)) (1_m - \sum_{i\in X_j} q_i).$$
 Define  $\delta: \E \hookr \A_m $ by $$\delta(e_1, \dots ,e_k) = \hat\delta_1(e_{k(1)}) + \dots + \hat\delta_l(e_{k(l)}) + \delta_1 (e_1) + \dots +\delta_k (e_k).$$
 Since each $\delta_i$ is an isomorphism, it is clear that $\delta$ is left-invertible.
  To check that $\delta \circ \gamma = \phi_n^m$, by linearity of the maps it is enough to check it only for $\bar a = (0, \dots, 0,a_j, 0, \dots,0)\in \A_n$. If $\gamma(\bar a) = (e_1, \dots , e_k)$ then 
 $$
  e_i = \begin{cases} 0 & i\notin X_j \\
  \gamma_i(\bar a)      & i \in X_j
   \end{cases}
  $$
for $i\leq k$. Also note that $e_{k(j)}=\hat \gamma_j(a_j)$. Assume $X_j = \{r_1, \dots, r_\ell\}$. Then by (1)-(3) we have 
\begin{align*}
\delta \circ \gamma (\bar a) & = \hat \delta_j (\hat \gamma_j (a_j))+ \delta_{r_1}(\gamma_{r_1}(\bar a))+ \dots+ \delta_{r_\ell}(\gamma_{r_\ell}(\bar a)) \\
&=   (1_m - \sum_{i\in X_j}q_i)\phi_n^m(\bar a) (1_m - \sum_{i\in X_j} q_i) + q_{r_1} \phi_n^m(\bar a) q_{r_1} +\dots +  q_{r_\ell} \phi_n^m(\bar a) q_{r_\ell} \\
&= \phi_n^m(\bar a).
\end{align*}
This completes the proof.
\end{proof}

\subsection{AF-algebras with the Cantor property are $C(2^\mathbb N)$-absorbing}
 Suppose $\A$ is an AF-algebra with  Cantor property.
Define $\A^\mathcal C$ to be the limit of the sequence $(\B_n, \psi_n^m)$ such that $\B_n = \bigoplus_{i \leq 2^{n-1}} \A_{n} \cong \mathbb C^{2^{n-1}} \otimes \A_n$ and $\psi_n^{n+1} = \bigoplus_{i \leq 2^n} \phi_n^{n+1}$, as shown in the following diagram
\begin{equation}\label{A^N-diag}
\begin{tikzcd} [row sep=0.05mm , column sep=large] 
& & & ... \\
& & \A_{3}  \arrow[ru, hook] \arrow[rd, hook] \\
&&& ...\\
& \A_{2} \arrow[ruu, hook, "\phi_2^3"]   \arrow[rdd, hook, "\phi_2^3"]  & & \\
&&& ...\\
& & \A_{3}  \arrow[ru, hook]  \arrow[rd, hook] \\
&&& ...\\
\A_1 \arrow[ruuuu, hook, "\phi_1^2"]  \arrow[rdddd, hook, "\phi_1^2"] \\
&&& ...\\
& & \A_{3}  \arrow[ru, hook]  \arrow[rd, hook] \\
&&& ...\\
& \A_{2} \arrow[ruu, hook, "\phi_2^3"]   \arrow[rdd, hook, "\phi_2^3"]  & & \\
&&& ...\\
& & \A_{3}  \arrow[ru, hook] \arrow[rd, hook] \\
&&& ...\\
\end{tikzcd}
\end{equation}
It is straightforward to check that $\A^\mathcal C \cong \A \otimes C(2^\mathbb N)\cong C(2^\mathbb N, \A)$.
\begin{lemma}\label{A^N}
$\A^\mathcal C$ has the Cantor property.
\end{lemma}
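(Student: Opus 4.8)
The plan is to verify that the sequence $(\B_n, \psi_n^m)$ defining $\A^\mathcal C$, or a suitable refinement of it, satisfies conditions (D0), (D1), (D2) of Definition \ref{Bratteli}. Since $\B_n = \bigoplus_{i\leq 2^{n-1}} \A_n$ and $\psi_n^{n+1} = \bigoplus_{i\leq 2^n}\phi_n^{n+1}$, the Bratteli diagram $\mathfrak E$ of $(\B_n,\psi_n^m)$ is essentially $2^{n-1}$ disjoint copies of the Bratteli diagram $\mathfrak D$ of $(\A_n,\phi_n^m)$ at level $n$, where each copy at level $n$ feeds into exactly two copies at level $n+1$ (via the binary splitting) each mapping by $\phi_n^{n+1}$. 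Concretely, a node of $\mathfrak E$ at level $n$ is a pair $(i,(n,s))$ with $i\leq 2^{n-1}$ and $(n,s)\in\mathfrak D$, with $\dim(i,(n,s)) = \dim(n,s)$, and $(i,(n,s))\to(i',(n+1,t))$ in $\mathfrak E$ iff $i'$ is one of the two children of $i$ and $(n,s)\to(n+1,t)$ in $\mathfrak D$ via $\phi_n^{n+1}$; more generally the number of paths from $(i,(n,s))$ to $(i',(m,t))$ in $\mathfrak E$ equals (number of paths from $i$ to $i'$ in the binary tree, which is $0$ or $1$) times (number of paths from $(n,s)$ to $(m,t)$ in $\mathfrak D$).

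First I would verify (D0): given $(i,(n,s))\in\mathfrak E$, apply (D0) for $\mathfrak D$ to get $(n+1,t)$ with $\dim(n+1,t)=\dim(n,s)$ and $(n,s)\to(n+1,t)$ via $\phi_n^{n+1}$; then for either child $i'$ of $i$, the node $(i',(n+1,t))$ works. Second, (D1) is where the binary splitting does the real work and is in fact easier here than for a general Cantor-property algebra: given $(i,(n,s))\in\mathfrak E$, the two distinct children $i', i''$ of $i$ at level $n+1$ give distinct nodes $(i',(n+1,s'))$ and $(i'',(n+1,s'))$ — where $(n+1,s')$ is chosen by (D0) for $\mathfrak D$ so that $\dim = \dim(n,s)$ and $(n,s)\to(n+1,s')$ — and both are reached from $(i,(n,s))$; one may also just invoke (D1) for $\mathfrak D$ directly inside a single branch. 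Third, for (D2): given nodes $(i_1,(n,s_1)),\dots,(i_k,(n,s_k))$ and $(i',(n',s'))$ in $\mathfrak E$ with $\sum x_p\dim(n,s_p)\leq\dim(n',s')$, I would first move all of the source nodes into a common branch of the binary tree — pass to a level $m_0$ large enough that some single index $j$ at level $m_0$ is a descendant of every $i_p$ (here I need the $(n,s_p)$ to all be at the same level $n$, which (D2) grants; if not, first push them down using (D0)) — so that along $\psi$ the multiplicities from $(i_p,(n,s_p))$ into $(j,(m_0, s_p^*))$ are controlled, where $s_p^*$ comes from iterating (D0), and each such path count in $\mathfrak E$ equals the corresponding path count in $\mathfrak D$. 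Then apply (D2) for $\mathfrak D$ to the nodes $(m_0,s_1^*),\dots,(m_0,s_k^*)$, the target $(n',s')$, and the same $x_p$, obtaining $m\geq m_0$ and $(m,t)\in\mathfrak D$ with $\dim(m,t)=\dim(n',s')$ and exactly $x_p$ paths from $(m_0,s_p^*)$ to $(m,t)$; finally take the $\mathfrak E$-node $(j',(m,t))$ where $j'$ is the descendant of $j$ at level $m$ lying below $i'$ — adjusting $m$ upward via (D0)/(D1) if necessary so that $j'$ both is a descendant of $j$ in the tree and maps from the right side, using that after $j$ the binary tree path below $i'$ is forced. One checks the path count from $(i_p,(n,s_p))$ to $(j',(m,t))$ in $\mathfrak E$ is then exactly $x_p$, composing the (unique) tree path with the $\mathfrak D$-paths.

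The main obstacle is bookkeeping in (D2): reconciling the binary-tree index bookkeeping with the target node $(i',(n',s'))$. One must be careful that the chosen large level $m$ admits an index $j'$ that is simultaneously (a) a descendant of the common ancestor-choice $j$ used to collect the sources and (b) sitting below $i'$; since the binary tree is a tree, any two nodes have comparable descendants at sufficiently deep levels, so this is arranged by taking $m$ large — but one must ensure the (D2) application to $\mathfrak D$ still succeeds at that larger level, which it does by monotonicity of the hypothesis and by padding with (D0) to extend paths from level $m$ to any later level while preserving path counts. A clean way to avoid some of this is to first replace $(\B_n,\psi_n^m)$ by the telescoped/rearranged subsequence in which one only keeps levels where every relevant ancestor relation in the binary tree has been resolved; since passing to a subsequence of a Bratteli diagram does not change the limit algebra and Remark \ref{remark-CP-def} says any representing sequence of a Cantor-property algebra satisfies (D1) and (D2), it suffices to exhibit one such sequence. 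I expect the verification of (D0) and (D1) to be essentially immediate, and the bulk of the write-up to be the careful path-counting argument for (D2).
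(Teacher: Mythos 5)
Your overall strategy is the one the paper uses: verify (D0)--(D2) directly for the sequence $(\B_n,\psi_n^m)$, and your treatment of (D0) and (D1) is correct and matches what the paper means by ``analyzing the Bratteli diagram''. The gap is in (D2), at the step where you ``move all of the source nodes into a common branch'': you assert that for large enough $m_0$ some single tree-index $j$ at level $m_0$ is a descendant of every $i_p$, justified by ``since the binary tree is a tree, any two nodes have comparable descendants at sufficiently deep levels''. This is exactly backwards for a tree: in the binary tree underlying Diagram (\ref{A^N-diag}) every node has a unique ancestor at each earlier level, so the descendant sets of distinct indices at the same level are \emph{disjoint} at all later levels, and no common descendant $j$ exists once two of the $i_p$ differ. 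Since a node $(j,(m,t))$ of your diagram $\mathfrak E$ receives a positive number of paths from $(i_p,(n,s_p))$ only when $j$ lies below $i_p$ in the tree, no node receives at least one path from sources sitting in two different branches. Consequently (D2) for the literal sequence $(\B_n,\psi_n^m)$ is \emph{false} whenever sources occupy distinct branches and all $x_p\geq 1$ --- for instance, if both $\mathbb C$ and $M_2$ are retracts of $\A$, take two dimension-one nodes lying in the two branches below the root, $x_1=x_2=1$, and target dimension $2$; this is precisely the instance of (D$2'$) coming from the diagonal embedding $\mathbb C\oplus\mathbb C\hookr M_2$. So this is not a bookkeeping problem that more careful path-counting can resolve, and neither telescoping (which preserves the branch-disjointness) nor the appeal to Remark \ref{remark-CP-def} (circular here, since it presupposes that $\A^{\mathcal C}$ has the Cantor property) repairs it.

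What survives of your argument is the single-branch case: when all the $i_p$ coincide, path counts in $\mathfrak E$ reduce to path counts in $\mathfrak D$ and (D2) for $\A$ transfers verbatim; that is evidently the case the paper's one-line proof has in mind, so you have in fact isolated a genuine difficulty that the paper glosses over. To prove the lemma as stated one must exhibit a \emph{different} representing sequence of $\A^{\mathcal C}$ satisfying (D1)--(D2) (Definition \ref{Bratteli} only asks for some sequence), e.g.\ by routing through $\A^{\mathcal C}\cong\A$ and borrowing $(\A_n,\phi_n^m)$ --- though note that the paper's proof of Lemma \ref{A^N=A} itself applies Lemma \ref{absorbing} to the sequence $(\B_n,\psi_n^m)$, so that route must be arranged carefully to avoid circularity. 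In short: your plan for (D0) and (D1) is fine, but the (D2) step rests on a false statement about trees and the write-up as designed cannot be completed.
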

\begin{proof}
We check that $(\B_n, \psi_n^m)$ satisfies (D0)--(D2).
Each $\psi_n^{n+1}$ is left-invertible, by Proposition \ref{fact1} and since $\phi_n^{n+1}$ is  left-invertible, therefore (D0) holds. Conditions (D1) and (D2) are trivially satisfied by analyzing the Bratteli diagram (\ref{A^N-diag}), since $\A$ satisfies them. 
\end{proof}

\begin{lemma}\label{A^N=A}
Suppose $\A$ is an AF-algebra with  Cantor property. Then $\A \otimes C(2^\mathbb N)$ is isomorphic to $\A$.
\end{lemma}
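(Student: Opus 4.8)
The plan is to prove $\A \otimes C(2^{\mathbb{N}}) \cong \A$ using a back-and-forth argument between the two left-invertible sequences representing $\A$ and $\A^{\mathcal{C}}$, exploiting the absorbing property established in Lemma~\ref{absorbing}. Write $\A = \varinjlim(\A_n, \phi_n^m)$ witnessing the Cantor property, and recall $\A^{\mathcal{C}} = \varinjlim(\B_n, \psi_n^m)$ which, by Lemma~\ref{A^N}, also has the Cantor property; moreover each $\B_n$ is a finite-dimensional $C^*$-algebra and each $\psi_n^m$ is left-invertible. Crucially, since $\B_n = \bigoplus_{i \le 2^{n-1}} \A_n$ is a direct sum of copies of $\A_n$, and $\A_n$ itself is (trivially) a retract of $\A^{\mathcal{C}}$ appearing cofinally, each $\B_n$ is a finite-dimensional retract of $\A^{\mathcal{C}}$; symmetrically each $\A_n$ is a finite-dimensional retract of $\A$. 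Both of these facts are the hypotheses needed to feed Lemma~\ref{absorbing} in each direction.

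First I would set up the back-and-forth. Start with $\A_1$ and a left-invertible embedding $\alpha_1 \colon \A_1 \hookr \B_{k_1}$ for some $k_1$ — this exists because $\A_1$ is a finite-dimensional retract of $\A^{\mathcal{C}}$, hence a retract of some $\B_{k_1}$ by Lemma~\ref{sub-retract}(2). Now apply Lemma~\ref{absorbing} in the other direction: $\B_{k_1}$ is a finite-dimensional retract of $\A$, and $\alpha_1 \colon \A_1 \hookr \B_{k_1}$ is... wait, this needs care — Lemma~\ref{absorbing} as stated takes a left-invertible embedding $\gamma \colon \A_n \hookr \E$ with $\E$ a retract of the ambient algebra and produces $\delta \colon \E \hookr \A_m$ with $\delta \circ \gamma = \phi_n^m$. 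So the roles alternate: at odd steps use $\A = \varinjlim \A_n$ as the ambient algebra with $\E$ ranging over $\B$'s; at even steps use $\A^{\mathcal{C}} = \varinjlim \B_n$ as ambient with $\E$ ranging over $\A$'s. Concretely: given $\alpha_j \colon \A_{n_j} \hookr \B_{k_j}$ left-invertible, since $\B_{k_j}$ is a retract of $\A$, Lemma~\ref{absorbing} gives $m_{j} \ge n_j$ and left-invertible $\beta_j \colon \B_{k_j} \hookr \A_{m_j}$ with $\beta_j \circ \alpha_j = \phi_{n_j}^{m_j}$; then relabel $n_{j+1} = m_j$ and, since $\A_{n_{j+1}}$ is a retract of $\A^{\mathcal{C}}$, apply Lemma~\ref{absorbing} again (with the sequence $(\B_n, \psi_n^m)$, which we must first check also satisfies the hypotheses of Lemma~\ref{absorbing} — it does, being a Cantor-property sequence by Lemma~\ref{A^N}) to get $k_{j+1} \ge k_j$ and left-invertible $\alpha_{j+1} \colon \A_{n_{j+1}} \hookr \B_{k_{j+1}}$ with $\alpha_{j+1} \circ \beta_j = \psi_{k_j}^{k_{j+1}}$. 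This produces an interleaved commuting ladder.

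Next I would harvest the isomorphism from the ladder. Passing to the cofinal subsequences $(\A_{n_j})_j$ and $(\B_{k_j})_j$ (which have limits $\A$ and $\A^{\mathcal{C}}$ respectively), the commuting relations $\beta_j \circ \alpha_j = \phi_{n_j}^{n_{j+1}}$ and $\alpha_{j+1} \circ \beta_j = \psi_{k_j}^{k_{j+1}}$ say precisely that the two towers, zipped together, form a single direct sequence; by the universal property of direct limits, $\varinjlim_j \A_{n_j} \cong \varinjlim_j \B_{k_j}$, i.e.\ $\A \cong \A^{\mathcal{C}} \cong \A \otimes C(2^{\mathbb{N}})$. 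One should note the maps $\alpha_j, \beta_j$ need not be unital, but that is harmless for non-unital AF-algebras; in the unital setting one would instead invoke the unital version of Lemma~\ref{absorbing}.

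The main obstacle I anticipate is bookkeeping rather than conceptual: one must verify that $(\B_n, \psi_n^m)$ genuinely satisfies the hypotheses under which Lemma~\ref{absorbing} was proved (handled by Lemma~\ref{A^N}), confirm that $\A_n$ is a finite-dimensional retract of $\A^{\mathcal{C}}$ and $\B_n$ a finite-dimensional retract of $\A$ so that Lemma~\ref{absorbing} applies in both directions, and keep the index juggling straight so that the commuting squares actually line up into one sequence. No genuinely hard estimate is involved — the content has been front-loaded into Lemma~\ref{absorbing} and Lemma~\ref{A^N}, and the remaining work is the standard categorical back-and-forth packaging of a Fra\"\i ss\'e-type amalgamation.
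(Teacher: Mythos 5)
Your proposal is correct and follows essentially the same route as the paper: a back-and-forth intertwining of the two left-invertible sequences built by applying Lemma~\ref{absorbing} alternately with ambient algebra $\A$ (using that each $\B_n$ is a retract of $\A$, via Lemma~\ref{sub-retract} and Proposition~\ref{sum-retract}) and with ambient algebra $\A^{\mathcal C}$ (using Lemma~\ref{A^N} and that each $\A_n$ is a retract of $\B_n$), then passing to the limit of the zipped sequence.
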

\begin{proof} Identify $\A \otimes C(2^\mathbb N)$ with $\A^\mathcal C$.
Find sequences $(m_i)$ and $(n_i)$ of natural numbers and  left-invertible embeddings $\gamma_i : \A_{n_i} \hookr \B_{m_{i+1}}$ and 
$\delta_i : \B_{n_i} \hookr \A_{m_{i}}$ such that  $n_1 = m_1 = 1$, $m_2 = 2$ and $\gamma_1 = \psi_1^2$ and the diagram below is commutative.
\begin{equation}\label{intertwining}
\begin{tikzcd}
\B_1 \arrow[r, hook, "\psi_1^2"] \ar[d,equal]
& \B_{m_2} \arrow[rd, hook, "\delta_2"]  \arrow[rr, hook,  "\psi_{m_2}^{m_3}"] & & \B_{m_3} \arrow[rd, hook, "\delta_2"] \arrow[rr, hook,  "\psi_{m_3}^{m_4}"] &  & \dots & \A^\mathcal C \arrow[d, "\phi"] \\
\A_{1} \arrow[rr, hook,  "\phi_1^{n_2}"] \arrow[ru, hook,  "\gamma_1"] &
& \A_{n_2} \arrow[rr, hook,  "\phi_{n_2}^{n_3}"] \arrow[ru, hook,  "\gamma_2"] & & \A_{n_3} \arrow[r,  hook,  "\phi_{n_3}^{n_4}"]\arrow[ru, hook,  "\gamma_3"] & \dots & \A
\end{tikzcd}
\end{equation}
 The existence of such $\gamma_i$ and $\delta_i$ is guaranteed by Lemma \ref{absorbing}, since each $\B_i$ is a  retract of $\A$, by Lemma \ref{sub-retract} and Proposition \ref{sum-retract}, and of course each $\A_i$ is a retract of $\B_i$.  The universal property of inductive limits implies the existence of an isomorphism between $\A$ and $\A^\mathcal C$.
\end{proof}

\begin{remark}
As we will see in section \ref{tensor-cp}  the tensor products of two AF-algebras with Cantor property do not necessarily have the Cantor property. 
\end{remark}

\subsection{ Ideals}
 Let $\A = \varinjlim_n (\A_n, \phi_n^m)$ be an AF-algebra with  Cantor property, such that the Bratteli diagram $\mathfrak D$ of $(\A_n,\phi_n^m)$ 
satisfies (D0)--(D2) of Definition \ref{Bratteli}.  Let $\mathfrak J\subseteq \mathfrak D$ denote the Bratteli diagram of an ideal $\J\subseteq \A$. Put $\J_n = \bigoplus_{(n,s)\in \mathfrak J} \A_{n,s}$, which is an  ideal (a retract) of $\A_n$. Then $\J = \varinjlim_n (\J_n, \phi_n^m|_{\J_n})$. It is automatic from the fact that $\mathfrak J$ is a directed subdiagram of $\mathfrak D$ that each $\phi_n^m|_{\J_n}: \J_n \hookr \J_m$ is left-invertible and that $(\J_n, \phi_n^m|_{\J_n})$ satisfies (D0)--(D2). In particular:

\begin{proposition}
 Any ideal of an AF-algebra with  Cantor property also  has the Cantor property.    
\end{proposition}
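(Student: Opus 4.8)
The plan is to verify directly that the sub-sequence $(\J_n, \phi_n^m|_{\J_n})$ representing the ideal $\J$ inherits conditions (D0)--(D2) from the sequence $(\A_n, \phi_n^m)$ representing $\A$. The crucial structural fact, already recalled in the preamble, is that the Bratteli diagram $\mathfrak J$ of an ideal is a \emph{hereditary and directed} subdiagram of $\mathfrak D$: directed means that if $(n,s)\in\mathfrak J$ and $(n,s)\to(m,t)$ then $(m,t)\in\mathfrak J$, and hereditary means that if $(m,t)\in\mathfrak J$ and $(n,s)\to(m,t)$ then $(n,s)\in\mathfrak J$. The hereditary property ensures that no ``path out of $\mathfrak J$ and back in" exists, so that counting paths inside $\mathfrak J$ agrees with counting paths inside $\mathfrak D$ whenever the endpoint lies in $\mathfrak J$; in particular $\Mult$ data for the restricted maps coincides with that for the original maps on the relevant components.

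Next I would check the three conditions in turn. For (D0): given $(n,s)\in\mathfrak J$, apply (D0) in $\mathfrak D$ to get $(n+1,t)$ with $\dim(n,s)=\dim(n+1,t)$ and $(n,s)\to(n+1,t)$; by directedness $(n+1,t)\in\mathfrak J$, so (D0) holds for $\mathfrak J$, and this is exactly the statement that $\phi_n^m|_{\J_n}$ is left-invertible. For (D1): given $(n,s)\in\mathfrak J$, apply (D1) in $\mathfrak D$ to obtain the two distinct nodes $(m,t),(m,t')$ of the same dimension with $(n,s)\to(m,t)$ and $(n,s)\to(m,t')$; again directedness puts both in $\mathfrak J$. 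For (D2): take nodes $(n,s_1),\dots,(n,s_k),(n',s')\in\mathfrak J$ and multiplicities $x_i$ with $\sum x_i\dim(n,s_i)\le\dim(n',s')$; apply (D2) in $\mathfrak D$ to get $m\ge n$ and $(m,t)\in\mathfrak D$ with $\dim(m,t)=\dim(n',s')$ and exactly $x_i$ paths from $(n,s_i)$ to $(m,t)$ in $\mathfrak D$. Since some $x_i\ge 1$ (or more carefully, since $(n,s_i)\to(m,t)$ for at least one $i$ with $x_i\ge1$, and if all $x_i=0$ the statement about $(m,t)$ is about a node reachable from the nonempty set $\{(n',s')\}$--one should instead note that $(n',s')\in\mathfrak J$ forces, via the witness construction, that $(m,t)\in\mathfrak J$), directedness from any node $(n,s_i)$ with $x_i\ge 1$ gives $(m,t)\in\mathfrak J$, and the hereditary property guarantees that every one of those $x_i$ paths lies entirely within $\mathfrak J$, so the path count inside $\mathfrak J$ is unchanged.

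The one point deserving care--and the only potential obstacle--is the degenerate case in (D2) where all $x_i=0$: then the hypothesis $0\le\dim(n',s')$ is vacuous and we merely need \emph{some} $(m,t)\in\mathfrak J$ with $\dim(m,t)=\dim(n',s')$ and no paths from the $(n,s_i)$; this is supplied by (D0) applied repeatedly to $(n',s')\in\mathfrak J$ together with directedness, choosing $m$ large enough and $(m,t)$ a dimension-preserving descendant of $(n',s')$, and then, if necessary, using (D1) to separate it from the images of the $(n,s_i)$. Equivalently, one invokes the reformulation (D$2'$) from Remark \ref{remark-CP-def}: an ideal $\D$ of $\J_n$ is also an ideal of $\A_n$, and a matrix-algebra retract of $\J$ is in particular a retract of $\A$ (via Lemma \ref{reverse-maps} composed with the inclusion $\J\hookrightarrow\A$ and a conditional expectation--more simply, a retract of $\J$ is a retract of $\A$ because $\J$ is a retract of $\A$ when its diagram is also directed downward, but in general one argues through (D$2'$) applied inside $\mathfrak J$ directly). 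In all cases the witness produced by the corresponding condition for $\mathfrak D$ can be pushed into $\mathfrak J$ using directedness, and no new paths are created because $\mathfrak J$ is hereditary; hence $(\J_n,\phi_n^m|_{\J_n})$ satisfies (D0)--(D2) and $\J$ has the Cantor property.
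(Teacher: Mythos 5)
Your proof is correct and follows essentially the same route as the paper, which simply observes that the ideal's Bratteli diagram $\mathfrak J$ is a directed and hereditary subdiagram of $\mathfrak D$ and that (D0)--(D2) then pass down ``automatically''; you have merely made that automatic verification explicit (and correctly identified that directedness pushes the witnesses into $\mathfrak J$ while heredity/directedness preserves path counts). The only place you go beyond the paper is the degenerate case of (D2) with all $x_i=0$, which in fact does not arise under the paper's conventions ($0\notin\mathbb N$ here, as the explicit use of $\mathbb N\cup\{0\}$ elsewhere in the paper indicates), so your somewhat shakier fix for that case is not needed.
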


Here is another elementary fact about $C(2^\mathbb N)$ that is (essentially by Lemma \ref{A^N=A}) passed on to AF-algebras with  Cantor property.

\begin{proposition}\label{prop} Suppose $\A$ is an AF-algebra with  Cantor property and $\Q$ is  a quotient of $\A$. Then there is a surjection $\eta: \A \surj \Q$  such that $ker(\eta)$ is an essential ideal of $\A$.
\end{proposition}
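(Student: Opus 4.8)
The plan is to realize $\Q$ as the quotient $\A/\J$ for some ideal $\J$, and then enlarge $\J$ to an essential ideal by tensoring with $C(2^\mathbb N)$ in the right way, exactly as in the proof of Lemma \ref{A^N=A}. First I would fix a sequence $(\A_n,\phi_n^m)$ representing $\A$ and satisfying (D0)--(D2), and let $\mathfrak J\subseteq\mathfrak D$ be the directed hereditary subdiagram of the Bratteli diagram $\mathfrak D$ corresponding to $\J=\ker(\A\surj\Q)$; then $\Q\cong\A/\J=\varinjlim(\A_n/\J_n,\bar\phi_n^m)$, where $\J_n=\bigoplus_{(n,s)\in\mathfrak J}\A_{n,s}$ is a retract of $\A_n$ and $\A_n/\J_n\cong\bigoplus_{(n,s)\notin\mathfrak J}\A_{n,s}$ is again a finite-dimensional $C^*$-algebra, with $\bar\phi_n^m$ the induced maps.

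Now observe that because $\J_n$ is a retract of $\A_n$, we may split $\A_n\cong\J_n\oplus(\A_n/\J_n)$ and regard each $\bar\phi_n^m$ as a (not necessarily left-invertible) homomorphism between finite-dimensional $C^*$-algebras. The key point is that $(\A_n/\J_n,\bar\phi_n^m)$ need not be a left-invertible sequence, so $\Q$ itself need not have the Cantor property; but we only need a surjection from $\A$ onto $\Q$ with essential kernel. So next I would build, in the style of the diagram in Proposition \ref{ess-quotient} and the diagram (\ref{A^N-diag}) defining $\A^\mathcal C$, an auxiliary AF-algebra by placing along a ``spine'' the sequence $(\A_n/\J_n)$ together with, at each level, extra direct summands chosen so that (a) the resulting connecting maps become left-invertible, and (b) the spine becomes an essential subdiagram's complement. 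Concretely: since $\A$ has the Cantor property, $\A\otimes C(2^\mathbb N)\cong\A$ by Lemma \ref{A^N=A}, and under this identification the ``constant functions'' copy of $\A$ sits as a retract with the complementary ideal essential; composing $\A\cong\A\otimes C(2^\mathbb N)\surj\A\otimes\mathbb C=\A\surj\Q$ does not immediately give essential kernel, so instead I would use the fan construction: let $\eta$ be the limit of the diagram whose $n$-th stage is $\A_n$, and whose maps send $\A_n\hookrightarrow\A_n\oplus(\A_{n-1}/\J_{n-1})\oplus\cdots\oplus(\A_1/\J_1)$ via $\phi_n^{n+1}$ on the first coordinate and the quotient maps on the others — wait, more cleanly: take the limit $\A'$ of the tree where the top line carries $(\A_n/\J_n,\bar\phi_n^m)$ and each node $\A_n/\J_n$ also connects by $\id$ down into a copy of $\A$-with-Cantor-property realized below, glued so that $\A'\cong\A$ by the Cantor property characterization (using (D0)--(D2) verification as in Lemma \ref{A^N}), while the subdiagram omitting the top line is directed, hereditary, and essential with quotient $\Q$.

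The main obstacle, and where I would spend the most care, is verifying that the enlarged diagram still represents $\A$ (not just some AF-algebra with the Cantor property) — i.e.\ checking (D0), (D1), (D2) for the new Bratteli diagram and then invoking the isomorphism-uniqueness of AF-algebras with the Cantor property determined by their matrix retracts (Corollary \ref{retract-coro}, which should show the new algebra has the same finite-dimensional retracts as $\A$). Equivalently, and perhaps more simply: apply Proposition \ref{ess-quotient} to $\Q$ to get an AF-algebra $\A''\supseteq\Q$ which is a left-invertible limit with $\A''/\J''\cong\Q$ for an essential ideal $\J''$; then by the universality/Cantor-property machinery (Lemma \ref{absorbing} and the back-and-forth argument behind Lemma \ref{A^N=A}, or Theorem \ref{main theorem}'s split-extension property applied within the appropriate $\oplus$-stable category generated by the retracts of $\A$) there is a left-invertible embedding $\A''\hookrightarrow\A$ exhibiting $\A$ as a split-extension of $\A''$, hence a surjection $\rho:\A\surj\A''$; composing with $\A''\surj\Q$ gives $\eta:\A\surj\Q$, and one checks $\ker(\eta)$ is essential because $\ker(\rho)$ and $\J''$ are both essential and an extension-by-essential-over-essential stays essential at the level of Bratteli diagrams (the union of two sets each of which is ``cofinal below'' every node is again cofinal below every node). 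I expect the delicate part to be arranging that $\A''$ is a retract of $\A$ specifically (rather than of $\A_{\mathfrak F}$): this needs that every finite-dimensional retract of $\A''$ is already a retract of $\A$, which holds because $\A''/\J''\cong\Q$ is a quotient of $\A$ built from the same matrix blocks, but it requires an honest check that the relevant category of finite-dimensional algebras is $\oplus$-stable and contains all the blocks appearing in the fan.
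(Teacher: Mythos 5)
The approach you dismiss in your second paragraph is exactly the paper's proof, and your reason for dismissing it is incorrect. The paper reduces the statement to finding an essential ideal $\J$ of $\A$ with $\A/\J\cong\A$: identifying $\A$ with $\A^\mathcal C=\A\otimes C(2^\mathbb N)$ via Lemma \ref{A^N=A}, the subdiagram of Diagram (\ref{A^N-diag}) consisting of all nodes off the lowest line is directed, hereditary, and meets every directed hereditary subdiagram, so it yields an essential ideal $\J$ with $\A^\mathcal C/\J\cong\A$. Composing $\A\cong\A^\mathcal C\surj\A^\mathcal C/\J\cong\A\surj\Q$ \emph{does} immediately give an essential kernel, because the kernel of the composite contains the essential ideal $\J$, and any ideal containing an essential ideal is essential (if $\J\subseteq\I$ and $K$ is a nonzero ideal, then $K\cap\I\supseteq K\cap\J\neq 0$). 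This trivial observation is the only thing missing from the route you abandoned; with it, the entire fan construction of your second paragraph becomes unnecessary.

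Your third route (apply Proposition \ref{ess-quotient} to $\Q$ to obtain $\A''$ with $\A''/\J''\cong\Q$ for $\J''$ essential, realize $\A''$ as an object of $\LL_\A$, retract $\A$ onto $\A''$ via Corollary \ref{CORlim-retract} and Theorem \ref{Cantor-closed-Fraisse}, and compose) can be made to work, and your check that the blocks of the fan are retracts of $\A$ (via Lemma \ref{sub-retract} and Proposition \ref{sum-retract}) is the right one. But your essentiality argument at the end is flawed: there is no reason for $\ker(\rho)$ to be essential --- the left inverse produced by the Fra\"\i ss\'e machinery gives no control over its kernel --- and the ``union of two cofinal sets'' heuristic does not apply, since the two ideals live in different algebras. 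What saves the argument is that essentiality of $\ker(\rho)$ is not needed: $\ker(\eta)=\rho^{-1}(\J'')$, and the preimage of an essential ideal under a surjective homomorphism is always essential (given a nonzero ideal $K$ of $\A$, either $K\subseteq\ker\rho\subseteq\rho^{-1}(\J'')$, or $\rho[K]$ is a nonzero ideal of $\A''$ meeting $\J''$, which pulls back to a nonzero element of $K\cap\rho^{-1}(\J'')$). With that substitution your third route is a correct, though much longer, alternative to the paper's short argument.
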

\begin{proof}
It is enough to show that there is an essential ideal $\J$ of $\A$ such that $\A/\J$ is isomorphic to $\A$. 
In fact, we will show that there is an essential ideal $\J$ of $\A^\mathcal C$ such that $\A^\mathcal C/ \J$ is isomorphic to $\A$. This is enough since $\A^\mathcal C$ is isomorphic to $\A$ (Lemma \ref{A^N=A}). Let $\mathfrak D$ be the Bratteli diagram of $\A^\mathcal C$ as in Diagram (\ref{A^N-diag}). Let $\mathfrak J$ be the directed and hereditary subdiagram of $\mathfrak D$ containing all the nodes in Diagram (\ref{A^N-diag}) except the lowest line.  Being directed and hereditary, $\mathfrak J$ corresponds to an ideal $\J$, which intersects any other directed and hereditary subdiagram of $\mathfrak D$. Therefore $\J$ is an essential ideal of $\A^\mathcal C$ and $\A^\mathcal C/\J$ is isomorphic to the limit of the sequence $\A_1 \xrightarrow{\phi_1^2} \A_{2} \xrightarrow{\phi_2^3} \A_{3}\xrightarrow{\phi_3^4} \dots $ in the lowest line of Diagram (\ref{A^N-diag}), which is $\A$. 
\end{proof}

\section{Fra\"\i ss\'e categories}\label{Flim-intro-section}

Suppose $\KK$ is a category of metric structures with non-expansive (1-Lipschitz) morphisms. We refer to objects and morphisms (arrows) of $\KK$ by $\KK$-objects and $\KK$-arrows, respectively. We write $A\in \KK$ if $A$ is a $\KK$-object and $\KK(A, B)$ to denote the set of all $\KK$-arrows from $A$ to $B\in \KK$.  The category $\KK$ is \emph{metric-enriched} or \emph{enriched over metric spaces} if for every $\KK$-objects $A$ and $B$ there is a metric $d$ on $\KK(A,B)$ satisfying
$$d(\psi_0 \circ \phi,\psi_1 \circ \phi) \leq d(\psi_0 ,\psi_1) \qquad \text{and} \qquad d(\psi \circ \phi_0,\psi \circ \phi_0) \leq d(\phi_0 ,\phi_1)$$
whenever the compositions make sense.
We say $\KK$ is \emph{enriched over complete metric spaces} if $\KK(A,B)$ is a complete metric space for every $\KK$-objects $A$, $B$.

A \emph{$\KK$-sequence} is a direct sequence in $\KK$, that is, a covariant functor from the category of all positive integers (treated as a poset) into $\KK$.

In our cases, $\KK$ will always be a category of finite-dimensional $C^*$-algebras with left-invertible embeddings. However, we would like to invoke the general theory of Fra\"\i ss\'e categories, which is possibly applicable to other similar contexts.

\begin{definition} \label{Fraisse-def}
We say $\KK$ is a \emph{Fra\"\i ss\'e category} if
\begin{enumerate}
\item[(JEP)] $\KK$ has \emph{the joint embedding property}:  for $A, B \in \KK$ there is $C\in \KK$ such that $\KK(A, C)$ and $\KK(B,C)$  are nonempty.
\item[(NAP)] $\KK$ has \emph{the near amalgamation property}:   for every $\epsilon>0$, objects $A,B,C\in \KK$, arrows $\phi\in \KK(A, B)$ and $\psi\in \KK(A, C)$, there are $D\in \KK$ and $\phi' \in \KK(B,D)$ and $\psi'\in \KK(C,D)$ such that $d(\phi'\circ \phi , \psi' \circ \psi)<\epsilon$.
 \item[(SEP)] $\KK$ is \emph{separable}:  there is a countable \emph{dominating} subcategory $\mathfrak C$, that is, 
 \begin{itemize}
     \item for every $A\in \KK$ there is $C\in \mathfrak C$ and a $\KK$-arrow $\phi: A \to C$,
     \item for every $\epsilon >0$ and a $\KK$-arrow $\phi: A \to B$ with $A\in \mathfrak C$, there exist a $\KK$-arrow $\psi: B \to C$ with $\C \in \mathfrak C$ and a $\mathfrak C$-arrow $\alpha: A \to C$ such that $d(\alpha, \psi \circ \phi)< \epsilon$.
 \end{itemize}
\end{enumerate}
\end{definition}

Now suppose that $\KK$ is contained in a bigger metric-enriched category $\LL$ so that every sequence in $\KK$ has a limit in $\LL$.
We say that $\KK \subseteq \LL$ has the \emph{almost factorization property} if given any sequence $(X_n,f_n^m)$ in $\KK$ with limit $X_\infty$ in $\LL$, for every $\epsilon>0$, for every $\LL$-arrow $g \colon A \to X_\infty$ with $A \in \KK$ there is a $\KK$-arrow $g' \colon A \to X_n$ for some positive integer $n$, such that $d(f_n^\infty \circ g', g) \leq \epsilon$, where $f_n^\infty \colon X_n \to X_\infty$ comes from the limiting cocone\footnote{Formally, the limit, or rather \emph{colimit} of $(X_n, f_n^m)$ is a pair consisting of an $\LL$-object $X_\infty$ and a sequence of $\LL$-arrows $f_n^\infty \colon X_n \to X_\infty$ satisfying suitable conditions. This sequence is called the (co-)limiting cocone. We use the word ``limit" instead of ``colimit" as we consider only covariant functors from the positive integers, called \emph{sequences}.}. 
\begin{theorem}{\cite[Theorem 3.3]{Kubis-ME}}\label{generic}
Suppose  $\KK$ is a   Fra\"\i ss\'e category.
Then there exists a sequence $(U_n, \phi_n^m)$ in $\KK$ satisfying 
\begin{itemize}
    \item[{\rm(F)}] for every $n\in \mathbb N$, for every $\epsilon>0$ and for every $\KK$-arrow $\gamma: U_n \to D$, there are $m\geq n$ and a $\KK$-arrow $\delta: D \to U_m$ such that $d(\phi_n^m , \delta \circ \gamma)<\epsilon$.
\end{itemize}
\end{theorem}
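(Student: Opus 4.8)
The plan is to construct the sequence $(U_n,\phi_n^m)$ by a standard back-and-forth enumeration argument, using the three Fra\"\i ss\'e properties in the obvious roles: (SEP) to have only countably many ``tasks'' to address, (JEP) to get the construction started and to merge, and (NAP) to perform each extension step while controlling the error. First I would fix a countable dominating subcategory $\mathfrak C\subseteq\KK$ as in (SEP). Since each hom-set $\KK(A,B)$ is a metric space and, via (SEP), $\KK$ is ``essentially countable,'' one can fix a countable family $\mathcal T$ of triples $(n,\gamma,\varepsilon)$ — more precisely, at stage $n$ of the construction a countable dense set of $\KK$-arrows $\gamma\colon U_n\to D$ with $D$ ranging over $\mathfrak C$, together with rational $\varepsilon>0$ — such that verifying (F) against all of $\mathcal T$ suffices to verify it for all $\KK$-arrows (density of $\gamma$ plus the metric-enrichment inequalities absorb the approximation, and the dominating property of $\mathfrak C$ lets us assume the codomain $D$ lies in $\mathfrak C$).

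Next I would run the recursion. Start with any $U_1\in\mathfrak C$ (using that $\mathfrak C$ is nonempty, or applying (SEP) to an arbitrary object). Using a bookkeeping function that visits every pair $(n,\text{task for }U_n)$ infinitely often, at a generic step we are given the finite sequence $U_1\to\cdots\to U_k$ already built and a task $(n,\gamma\colon U_n\to D,\varepsilon)$ with $n\le k$. Set $\psi=\phi_n^k\colon U_n\to U_k$. Apply (NAP) to the span $D\xleftarrow{\gamma}U_n\xrightarrow{\psi}U_k$ with tolerance $\varepsilon/2$: this yields an object $E$ and arrows $\phi'\colon U_k\to E$, $\delta'\colon D\to E$ with $d(\phi'\circ\psi,\delta'\circ\gamma)<\varepsilon/2$. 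Then use (SEP) to replace $E$ by an object $U_{k+1}\in\mathfrak C$ admitting a $\KK$-arrow $j\colon E\to U_{k+1}$; set $\phi_k^{k+1}=j\circ\phi'$ and $\delta=j\circ\delta'$. By the metric-enrichment inequality $d(\phi_k^{k+1}\circ\psi,\delta\circ\gamma)=d(j\circ\phi'\circ\psi,j\circ\delta'\circ\gamma)\le d(\phi'\circ\psi,\delta'\circ\gamma)<\varepsilon/2<\varepsilon$, so property (F) is satisfied for this task at stage $m=k+1$. (If at some early stages no task is pending, just extend by (JEP) or (SEP) to keep the sequence growing; (JEP) is only needed to ensure the process can always continue and to handle the base object, since here all objects in $\KK$ already embed into members of $\mathfrak C$.)

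Finally I would check that the resulting sequence works. Because the bookkeeping visits each task cofinally often and because enlarging $m$ never hurts (post-composing with further connecting maps $\phi_{m}^{m'}$ does not increase the distance, again by metric-enrichment), every task $(n,\gamma,\varepsilon)$ is eventually discharged, giving $m\ge n$ and $\delta\colon D\to U_m$ with $d(\phi_n^m,\delta\circ\gamma)<\varepsilon$. To pass from the dense countable family of $\gamma$'s and from codomains in $\mathfrak C$ to arbitrary $\KK$-arrows $\gamma\colon U_n\to D$ and arbitrary $\varepsilon>0$: given such $\gamma$ and $\varepsilon$, use (SEP) to find $D'\in\mathfrak C$ and $q\colon D\to D'$ together with a $\mathfrak C$-arrow approximating $q\circ\gamma$ within $\varepsilon/3$ by some $\gamma'$ from our enumerated dense set, solve the task for $(\gamma',\varepsilon/3)$ to get $\delta'\colon D'\to U_m$, and take $\delta=\delta'\circ q$; two applications of the enrichment inequality and the triangle inequality give $d(\phi_n^m,\delta\circ\gamma)<\varepsilon$.

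The main obstacle is the bookkeeping: one must set up the enumeration of ``all approximate extension problems'' carefully enough that a single sequence simultaneously handles problems posed at \emph{every} stage $U_n$, including stages not yet constructed when the enumeration is fixed. This is handled in the usual way by interleaving — enumerate pairs $(n,k)\in\mathbb N^2$ so that each $n$ recurs infinitely often, and when a pair $(n,k)$ comes up with $n$ larger than the current length, simply do a trivial extension step and postpone — together with the observation that the countable dense structure on the hom-sets (guaranteed by separability of $\KK$) makes the totality of relevant problems genuinely countable. Everything else is the routine verification above; this is exactly \cite[Theorem 3.3]{Kubis-ME}, whose proof we are sketching.
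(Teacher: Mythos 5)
The paper does not prove this statement at all: it is imported verbatim as \cite[Theorem 3.3]{Kubis-ME}, so there is no in-paper proof to compare against. Your sketch is the standard (and, as far as the cited reference goes, essentially the same) construction -- enumerate countably many extension tasks via the dominating subcategory from (SEP), discharge each one with (NAP) followed by a (SEP)-arrow back into $\mathfrak C$, and recover (F) for arbitrary arrows from the $2$-out-of-$3$ triangle-inequality argument using the enrichment inequalities -- and it is correct; the only cosmetic point is that the countability of the task set comes from $\mathfrak C$ being a countable dominating subcategory rather than from separability of the hom-sets, which is how you in fact use it.
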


If $\KK$ is a Fra\"\i ss\'e category, the $\KK$-sequence $(U_n, \phi_n^m)$ from Theorem \ref{generic} is uniquely determined by the ``Fra\"\i ss\'e condition" (F). That is, any two $\KK$-sequences satisfying (F) can be approximately intertwined (there is an approximate back-and-forth between them), and hence the limits of the sequences (typically in a bigger category containing $\KK$) must be isomorphic (see \cite[Theorem 3.5]{Kubis-ME}). Therefore the $\KK$-sequence satisfying (F) is usually referred to as ``the" \emph{Fra\"\i ss\'e sequence}. The limit of the Fra\"\i ss\'e sequence is called  the \emph{Fra\"\i ss\'e limit} of the category $\KK$.
In our case, $\KK$ will be a category of finite-dimensional $C^*$-algebras and the limit is just the inductive limit (also called colimit) in the category of all (or just separable) $C^*$-algebras.

\begin{theorem}[{cf. \cite{Kubis-ME}}]\label{uni-hom}
Assume $\KK$ is a Fra\"\i ss\'e category contained in a category $\LL$ such that every sequence in $\KK$ has a limit in $\LL$ and every $\LL$-object is the limit of some sequence in $\KK$.
Let $U\in \LL$ be the Fra\"\i ss\'e limit of $\KK$. Then 
\begin{itemize}[leftmargin=*]
\item (uniqueness) $U$ is unique, up to isomorphisms.
\item(universality) For every $\LL$-object $B$ there is an $\LL$-arrow $\phi: B \to U$.
\end{itemize}
Furthermore, if $\KK \subseteq \LL$ has the almost factorization property then
\begin{itemize}[leftmargin=*]
	\item(almost $\KK$-homogeneity) For every $\epsilon>0$, $\KK$-object $A$ and $\LL$-arrows $\phi_i : A \to U$ ($i=0,1$), there is an automorphism $\eta: U \to U$ such that $d(\eta \circ \phi_0  , \phi_1) <\epsilon$.
\end{itemize}
$$\begin{tikzcd}
& & U \ar[dd, "\eta"] \\
A \ar[rru, "\phi_0"] \ar[rrd, "\phi_1"] & & \\
& & U
\end{tikzcd}$$
\end{theorem}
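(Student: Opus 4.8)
The plan is to prove Theorem \ref{uni-hom} by invoking the abstract Fra\"\i ss\'e machinery of \cite{Kubis-ME}, with the three bullet points (uniqueness, universality, almost $\KK$-homogeneity) handled in turn, each reduced to a standard back-and-forth / absorption argument built on the Fra\"\i ss\'e sequence $(U_n,\phi_n^m)$ from Theorem \ref{generic} and its characterizing condition (F).

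First I would address \emph{uniqueness}. Suppose $(U_n,\phi_n^m)$ and $(V_n,\psi_n^m)$ are two $\KK$-sequences satisfying (F), with limits $U$ and $U'$ in $\LL$. I would construct, by induction, a strictly increasing pair of sequences of indices together with $\KK$-arrows forming an approximate intertwining: alternately use (F) for one sequence to factor a given arrow $U_{n_k}\to V_{m_k}$ through some $\psi_{m_k}^{m_{k+1}}$ up to error $\varepsilon_k$, then use (F) for the other sequence, with the $\varepsilon_k$ chosen summable and small enough that the ``almost commuting'' triangles glue, in the limit, to genuine mutually inverse $\LL$-arrows $U\to U'$ and $U'\to U$. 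This is exactly the argument behind \cite[Theorem 3.5]{Kubis-ME}; the only point requiring care is that the metric-enrichment inequalities let one absorb the accumulated errors when passing to the limit, which is where enrichment over \emph{complete} metric spaces (so that Cauchy sequences of arrows converge) is used.

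Next, \emph{universality}: given an $\LL$-object $B$, write $B=\varinjlim(B_n,g_n^m)$ for some $\KK$-sequence (possible by hypothesis on $\LL$). I would build an approximate embedding of $(B_n,g_n^m)$ into $(U_n,\phi_n^m)$ by recursion: at each step apply (JEP) to put $B_{n_k}$ and a stage $U_{m_k}$ into a common $\KK$-object, then apply (F) to pull that object back into the Fra\"\i ss\'e sequence, while using (NAP) to keep the growing diagram approximately commutative with controlled (summable) errors. Taking the limit yields an $\LL$-arrow $\phi\colon B\to U$. For \emph{almost $\KK$-homogeneity}, given $\varepsilon>0$, a $\KK$-object $A$ and $\LL$-arrows $\phi_0,\phi_1\colon A\to U$, I would first invoke the almost factorization property to replace each $\phi_i$ by an honest $\KK$-arrow $\phi_i'\colon A\to U_{n_i}$ with $d(\phi_n^\infty\circ\phi_i',\phi_i)$ tiny; then run the same back-and-forth as in the uniqueness proof, but this time \emph{starting} from the partial isomorphism data that already matches $\phi_0'$ with $\phi_1'$ (via (NAP) applied to the span $U_{n_0}\leftarrow A\rightarrow U_{n_1}$ and then (F)), so that the resulting automorphism $\eta\colon U\to U$ satisfies $d(\eta\circ\phi_0,\phi_1)<\varepsilon$.

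I expect the main obstacle to be purely bookkeeping rather than conceptual: choosing the error parameters $\varepsilon_k$ at each stage of every back-and-forth so that (i) the metric-enrichment inequalities propagate them correctly through compositions, (ii) the ``almost commuting'' squares at finite stages assemble to strictly commuting triangles in the limit, and (iii) in the homogeneity case the automorphism one obtains genuinely restricts (up to $\varepsilon$) to the prescribed matching on $A$. Since all of this is carried out in full generality in \cite{Kubis-ME} (Theorems 3.3, 3.5 and the homogeneity statement), the cleanest route is to state that Theorem \ref{uni-hom} is an instance of those results once one observes that our $\KK\subseteq\LL$ (finite-dimensional $C^*$-algebras inside separable $C^*$-algebras, with inductive limits) satisfies the standing hypotheses — every $\KK$-sequence has an inductive limit in $\LL$, every separable AF-algebra arising this way is such a limit, and the hom-sets are complete metric spaces — and, where almost $\KK$-homogeneity is claimed, that the almost factorization property holds. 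I would therefore present the proof as a short verification of these hypotheses followed by a citation, rather than reproducing the back-and-forth in detail.
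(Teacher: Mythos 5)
Your proposal is correct and matches the paper's treatment: the paper gives no proof of Theorem \ref{uni-hom} at all, simply attributing it to the general machinery of \cite{Kubis-ME}, and your sketch of the approximate back-and-forth arguments (intertwining for uniqueness, JEP/NAP/(F) recursion for universality, factorization plus back-and-forth for almost homogeneity) is exactly the content of the cited results. Your concluding suggestion --- verify the standing hypotheses and cite \cite{Kubis-ME} --- is precisely what the paper does.
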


\begin{definition}
Let $\ddagger\KK$ denote the category with the same objects as $\KK$, but a $\ddagger \KK$-arrow from $A$ to $B$ is a pair $(\phi, \pi)$ where $\phi, \pi$ are $\KK$-arrows, $\phi:A \to B$ is left-invertible and  $\pi: B \to A$ is a left inverse of $\phi$. We will denote such $\ddagger \KK$-arrow by $(\phi, \pi): A \to B$. The composition is $(\phi, \pi) \circ (\phi', \pi') = (\phi \circ \phi', \pi' \circ \pi)$. The category $\ddagger \KK$ is usually called the category of \emph{embedding-projection pairs} or briefly EP-pairs over $\KK$ (see \cite{Kubis-FS}).
\end{definition}

\begin{definition}
We say $\ddagger \KK$ has the \emph{near proper amalgamation property} if 
for every $\epsilon>0$, objects $A, B, C\in \KK$, arrows $(\phi, \pi)\in \ddagger\KK(A, B)$ and $(\psi, \theta)\in \ddagger\KK(A, C)$, there are $D\in \ddagger\KK$ and $(\phi', \pi') \in \ddagger\KK(B, D)$ and $(\psi', \theta')\in \ddagger\KK(C, D)$ such that the diagram 
\begin{equation*}\label{diag-PAP}
\begin{tikzcd}
 &  B \arrow[dr, hook',   "\phi'"] \arrow[dl, shift left=1ex, two heads, "\pi"]  \\ 
A \arrow[ur, hook', "\phi"] \arrow[dr,  hook,  "\psi"] 
& & D \arrow[ul, shift left=1ex, two heads, "\pi'"] \arrow[dl,shift left=1ex, two heads, "\theta'"]\\
& C \arrow[ur, hook, "\psi^\prime "]   \arrow[ul, shift left=1ex, two heads, "\theta"]
\end{tikzcd}
\end{equation*}
``fully commutes" up to $\epsilon$, meaning that  
$d( \phi' \circ \phi , \psi' \circ \psi )$, $d( \pi' \circ \pi , \theta' \circ \theta )$,  $d( \phi \circ \theta , \pi' \circ \psi' )$ and 
$d( \psi \circ \pi , \theta' \circ \phi' )$ are all less than or equal to $\epsilon$.
We say $\ddagger \KK$ has the ``proper amalgamation property" if $\epsilon$ could be $0$.
\end{definition}

Let us denote by $\dagger \KK$ the category of left-invertible $\KK$-arrows. In other words, $\dagger \KK$ is the image of $\ddagger \KK$ under the functor that forgets the left inverse, namely mapping $(\phi, \pi)$ to $\phi$. Note that all three categories $\KK$, $\dagger \KK$, and $\ddagger \KK$ have the same objects.

\begin{lemma}\label{EPlim-retract}
	Suppose $\LL$ is enriched over complete metric spaces, $\dagger \KK$ is a Fra\"\i ss\'e category with Fra\"\i ss\'e limit $U$, and $\ddagger \KK$ has the proper amalgamation property. Then for every $\LL$-object $B$ isomorphic to the limit of a sequence in $\dagger \KK$ there is a pair of $\LL$-arrows $\alpha \colon B \to U$, $\beta \colon U \to B$ such that
	$$\beta \circ \alpha = \id_{B}.$$
\end{lemma}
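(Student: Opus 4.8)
The plan is to use the approximate back-and-forth between two Fra\"\i ss\'e sequences in $\dagger\KK$, but carry the left inverses along so that the two limiting isomorphisms compose to the identity. First I would fix a sequence $(B_n, g_n^m)$ in $\dagger\KK$ with limit $B$ (via the limiting cocone $g_n^\infty\colon B_n\to B$), and choose compatible left inverses $h_n^m\colon B_m\to B_n$, so that $(B_n, g_n^m, h_n^m)$ is in effect a sequence in $\ddagger\KK$. By Lemma~\ref{reverse-maps} this yields $\LL$-arrows $h_n^\infty\colon B\to B_n$ with $h_n^\infty\circ g_n^\infty=\id_{B_n}$ and $h_n^m\circ h_m^\infty=h_n^\infty$. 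Let $(U_n,\phi_n^m)$ be the Fra\"\i ss\'e sequence of $\dagger\KK$ with cocone $\phi_n^\infty\colon U_n\to U$.

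Next I would run the standard back-and-forth, but inside $\ddagger\KK$ rather than $\dagger\KK$, exploiting that $\ddagger\KK$ has the (exact) proper amalgamation property and that the Fra\"\i ss\'e condition (F) for $\dagger\KK$ lifts: given a $\dagger\KK$-arrow out of some $U_n$, condition (F) produces $\delta$ with $d(\phi_n^m,\delta\circ\gamma)$ small, and by Lemma~\ref{fd-af} (unitary perturbation, since these are finite-dimensional $C^*$-algebras) one can absorb the small error into an honest equality $\phi_n^m=\delta'\circ\gamma$, and then $\delta'$ is itself left-invertible by Proposition~\ref{fact1}, so it is a genuine $\ddagger\KK$-arrow. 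Using this exact absorption together with the proper amalgamation property, I would build an increasing chain of indices $n_1<n_2<\cdots$ and $m_1<m_2<\cdots$ together with $\ddagger\KK$-arrows $a_k\colon B_{n_k}\to U_{m_k}$ and $b_k\colon U_{m_k}\to B_{n_{k+1}}$ such that the resulting ladder diagram commutes \emph{on the nose}: $b_k\circ a_k = g_{n_k}^{n_{k+1}}$ and $a_{k+1}\circ b_k = \phi_{m_k}^{m_{k+1}}$, and moreover the left inverses are intertwined in the same way. Passing to limits, the $a_k$ induce an $\LL$-arrow $\alpha\colon B\to U$, the $b_k$ induce $\beta\colon U\to B$, and the exact commutativity of the ladder forces $\beta\circ\alpha=\id_B$ (and $\alpha\circ\beta=\id_U$, though only the first identity is claimed).

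The main obstacle is arranging \emph{exact} rather than approximate commutativity of the ladder: the Fra\"\i ss\'e condition (F) and the near amalgamation underlying $\dagger\KK$ only give equalities up to $\epsilon$, and a naive limit of an approximately commuting ladder yields $\beta\circ\alpha$ merely close to, not equal to, $\id_B$. The key point that makes exactness available here is precisely that all objects are finite-dimensional $C^*$-algebras, so that the perturbation Lemmas~\ref{fd-af} and~\ref{matrix-absorbing} convert ``$d(\cdot,\cdot)<1$'' into ``equal after conjugating by a unitary'', and the hypothesis that $\ddagger\KK$ has the proper (not merely near) amalgamation property — plus the standing assumption that $\LL$ is enriched over complete metric spaces, which guarantees the induced maps on the limits actually exist. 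A secondary technical point is checking that the left-inverse data behaves functorially under the identifications, i.e. that the $h_n^m$ chosen really are compatible and that each $\delta'$ produced in the absorption step comes equipped with a left inverse making the EP-pair diagrams commute; this is handled by Proposition~\ref{fact1}, which describes left-invertible embeddings of finite-dimensional $C^*$-algebras completely, and by the fact that in $\ddagger\KK$ the left inverse is part of the arrow, so it is simply transported along.
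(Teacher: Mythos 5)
There is a genuine gap, and it is structural. Your ladder asks for $\ddagger\KK$-arrows $b_k\colon U_{m_k}\to B_{n_{k+1}}$ (embeddings, since every arrow of $\dagger\KK$ and $\ddagger\KK$ is a left-invertible embedding) satisfying $b_k\circ a_k=g_{n_k}^{n_{k+1}}$ and $a_{k+1}\circ b_k=\phi_{m_k}^{m_{k+1}}$ exactly. If such a two-sided exactly commuting ladder existed, the limits $\alpha$ and $\beta$ would be mutually inverse, i.e.\ $B\cong U$ --- as you yourself note in passing --- and this is false for a general $B$ in $\LL\KK$ (take $B$ a fixed finite-dimensional algebra, or any $B$ that is not the Fra\"\i ss\'e limit). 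The back-and-forth you are importing is the \emph{uniqueness} argument, which only works between two sequences both satisfying (F); here $(B_n)$ is arbitrary, and condition (F) gives you no way to embed $U_{m_k}$ into any $B_n$ at all. So the step ``build $b_k\colon U_{m_k}\hookrightarrow B_{n_{k+1}}$'' would fail.

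The paper's construction is one-sided: the only maps going from the $U$-row back to the $B$-row are the \emph{left inverses} (surjections) carried by the EP-pairs, never embeddings. Concretely, having built an EP-pair $(\alpha_k,\beta_k)\colon B_{n_k}\to U_{m_k}$, one properly amalgamates it in $\ddagger\KK$ with $(\psi_{n_k}^{n_{k+1}},\theta_{n_k}^{n_{k+1}})\colon B_{n_k}\to B_{n_{k+1}}$ over $B_{n_k}$ to get some $F\in\KK$ receiving both, then uses (F) to map $F$ into a further $U_{m_{k+1}}$. The point of the \emph{proper} amalgamation property (as opposed to ordinary amalgamation) is exactly that the left inverses intertwine as well, so that both $(\alpha_k)$ and $(\beta_k)$ converge and $\beta\circ\alpha=\id_B$ survives the limit; no identity $\alpha\circ\beta=\id_U$ is, or could be, obtained. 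Your subsidiary remarks are fine but secondary: absorbing the $\epsilon$ in (F) by a unitary via Lemma \ref{fd-af} works in the $C^*$-setting (cf.\ Remark \ref{e=0}), while the lemma as stated is categorical and the paper instead takes $\epsilon=2^{-n}$ at stage $n$ and uses completeness of $\LL(B,U)$.
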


\begin{proof}
	Suppose $(U_n, \phi_n^m)$ is a Fra\"\i ss\'e sequence in $\dagger \KK$. Suppose first that the sequence satisfies (F) with $\epsilon=0$ and that $\ddagger \KK$ has the proper amalgamation property, namely with $\epsilon=0$ (this will be the case in the next section). In this case we do not use the fact that $\LL$ is enriched over complete metric spaces.
	
	Fix a $\dagger \KK$-sequence $(B_n, \psi_n^m)$ whose direct limit is $B$. For each $n$ we may choose a left inverse $\theta_n^{n+1}$ to $\psi_n^{n+1}$ and next, setting $\theta_n^m = \theta_{m-1}^m \circ \dots \circ \theta_n^{n+1}$ for every $n<m$, we obtain a $\ddagger \KK$-sequence $(B_n, (\psi_n^m, \theta_n^m))$ whose direct limit is $B$.
	Using (JEP) of $\dagger \KK$ and fixing arbitrary left inverses, find $F_1\in \KK$ and $\ddagger \KK$-arrows $(\gamma_1, \eta_1): U_1 \to F_1$ and $(\mu_1, \nu_1): B_1 \to F_1$. By (F) and again fixing arbitrary left inverses, there are $n_1 \geq 1$ and a $\ddagger \KK$-arrow $(\delta_1, \lambda_1): F_1 \to U_{n_1}$ such that 
	$\phi_1^{n_1} = \delta_1 \circ \gamma_1$ (see Diagram (\ref{diag-retract}) below).   
	
	Consider the composition arrow $(\delta_1 \circ \mu_1, \nu_1 \circ \lambda_1): B_1 \to U_{n_1}$ and $(\psi_1^2, \theta_1^2): B_1 \to B_2$ and use the proper amalgamation property to find $\F_2\in \KK$ and $\ddagger \KK$-arrows $(\mu_2, \nu_2): B_2 \to F_2$ and $(\gamma_2, \eta_2): U_{n_1} \to F_2$ such that 
	\begin{equation}\label{eq2}
		\gamma_2 \circ \delta_1 \circ \mu_1 = \mu_2 \circ \psi_1^2  \qquad \text{and} \qquad  \nu_2 \circ \gamma_2 = \psi_1^2 \circ \nu_1 \circ \lambda_1.
	\end{equation}
	Again using (F) we can find $n_2 \geq n_1$ and $(\delta_2, \lambda_2): F_2 \to U_{n_2}$ such that 
	\begin{equation}\label{eq3}
		\phi_{n_1}^{n_2} = \delta_2 \circ \gamma_2.
	\end{equation}
	Combining equations in (\ref{eq2}) and (\ref{eq3}) we have (also can be easily checked in Diagram (\ref{diag-retract})):
	\begin{equation*}\label{eq4}
		\phi_{n_1}^{n_2} \circ \delta_1 \circ \mu_1 = \delta_2 \circ \mu_2 \circ \psi_1^2 \qquad \text{and} \qquad  
		\psi_{1}^{2} \circ \nu_1 \circ \lambda_1  = \nu_2 \circ \lambda_2 \circ \phi_{n_1}^{n_2}.
	\end{equation*}
	Again use the proper amalgamation property to find $F_3\in \KK$ and $(\mu_3, \nu_3): B_3 \to F_3$ and $(\gamma_3, \eta_3): U_{n_2} \to F_3$. Follow the procedure, by finding $\ddagger \KK$-arrow $(\delta_3, \lambda_3): F_3 \to U_{n_3}$, for some $n_3\geq n_2$  such that 
	\begin{equation*}
		\phi_{n_2}^{n_3} \circ \delta_2 \circ \mu_2 = \delta_3 \circ \mu_3 \circ \psi_2^3 \qquad \text{and} \qquad  \psi_{2}^{3} \circ \nu_2 \circ \lambda_2 =\nu_3 \circ \lambda_3 \circ \phi_{n_2}^{n_3} 
	\end{equation*}
	\begin{equation}\label{diag-retract}
		\begin{tikzcd}[column sep=normal]
			U_1 \arrow[rr, hook,  "\phi_1^{n_1}"] \arrow[dr, hook, shift left=1ex, "\gamma_1"] &  & U_{n_1} \arrow[rr, hook, "\phi_{n_1}^{n_2}"] \arrow[dr, hook, shift left=1ex, "\gamma_2"]  \arrow[dl, two heads, "\lambda_1"] &  & U_{n_2} \arrow[rr, hook,  "\phi_{n_2}^{n_3}"] \arrow[dr, hook, shift left=1ex, "\gamma_3"]   \arrow[dl, two heads, "\lambda_2"] & & U_{n_3} \arrow[r, hook,  "\phi_{n_3}^{n_4}"]   \arrow[dl, two heads, "\lambda_3"]  & \dots   \ \  U  \arrow[dd, two heads, shift left=3ex, "\beta"]
			\\
			& F_1 \arrow[dl, two heads, "\nu_1"] \arrow[ul, two heads, "\eta_1"] \arrow[ur, hook, shift left=1ex, "\delta_1"] &  &  F_2 \arrow[dl, two heads, "\nu_2"]  \arrow[ul, two heads, "\eta_2"] \arrow[ur, hook, shift left=1ex, "\delta_2"] & & 
			F_3 \arrow[dl, two heads, "\nu_3"] \arrow[ul, two heads, "\eta_3"] \arrow[ur, hook, shift left=1ex, "\delta_3"]  & &  \\
			B_1 \arrow[rr, hook, shift left=1ex, "\psi_1^2"] \arrow[ur, hook, shift left=1ex, "\mu_1"] &  & B_2 \arrow[rr, hook,shift left=1ex, "\psi_2^3"] \arrow[ur, hook, shift left=1ex, "\mu_2"]  \arrow[ll, two heads, "\theta_1^2"]   &  & B_3 \arrow[rr, hook, shift left=1ex, "\psi_3^4"] \arrow[ur, hook, shift left=1ex, "\mu_3"] \arrow[ll, two heads, "\theta_2^3"]  &  &  \dots \arrow[ll, two heads, "\theta_3^4"]  & \ \ \ \ \ B \arrow[uu, hook, shift right=2ex, "\alpha"]
		\end{tikzcd}
	\end{equation}
	Let $\alpha_i = \delta_i \circ \mu_i$ and $\beta_i: \nu_i \circ \lambda_i$.
	By the construction, for every $i\in \mathbb N$ we have 
	$$\phi_{n_i}^{n_{i+1}} \circ \alpha_i = \alpha_{i+1} \circ \psi_{i}^{i+1} \qquad \text{and} \qquad  \psi_1^2 \circ \beta_i = \beta_{i+1} \circ \phi_{n_i}^{n_{i+1}}$$
	and $\beta_i$ is a left inverse of $\alpha_i$.
	Then $\alpha = \lim_i \alpha_i$ is a well-defined arrow from $B$ to $U$ and $\beta = \lim_i \beta_i$ is a well-defined arrow from $U$ onto $B$ such that $\beta \circ \alpha = \id_{B}$.
	
	Finally, if $\ddagger \KK$ has the near proper amalgamation property and the sequence $(U_n, \phi_n^m)$ satisfies (F) with arbitrary $\epsilon>0$, we repeat the arguments above, except that Diagram~(\ref{diag-retract}) is no longer commutative. On the other hand, at step $n$ we may choose $\epsilon = 2^{-n}$ and then the arrows $\alpha$ and $\beta$ are obtained as limits of suitable Cauchy sequences in $\LL(B, U)$. This is the only place where we need to know that $\LL$ is enriched over complete metric spaces.
\end{proof}

Let us mention that the concept of EP-pairs has been already used by Garbuli\'nska-W\c egrzyn \cite{JGW} in the category of finite dimensional normed spaces, obtaining isometric uniqueness of a complementably universal Banach space.

\section{Categories of finite-dimensional $C^*$-algebras and left-invertible mappings} 
In this section $\KK$ always denotes a (naturally metric-enriched) category whose objects are (not necessarily all) finite-dimensional $C^*$-algebras, closed under isomorphisms, and $\KK$-arrows are left-invertible embeddings. For such $\KK$, let  $\LL\KK$ denote the ``category of limits" of $\KK$; a category whose objects are limits of  $\KK$-sequences and 
 if $\B$ and $\mathcal C$ are $\LL\KK$-objects, then an  $\LL\KK$-arrow from $\B$ into $\mathcal C$ is a left-invertible embedding  $\phi:\B \hookr \mathcal C$.  Clearly $\LL\KK$ contains $\KK$ as a full subcategory. The metric defined between $\LL\KK$-arrows $\phi$ and $\psi$ with the same domain and codomain is $\|\phi - \psi\|$. 
 
 For every such category $\KK$, let $\widehat\KK$ denote the category whose objects are exactly the objects of $\KK$, but the $\widehat\KK$-morphisms are all homomorphisms between the objects. Then we can define the corresponding category of EP-pairs $\ddagger\widehat\KK$ as in the previous section. In what follows, let us agree to write $\ddagger\KK$ instead of $\ddagger\widehat\KK$. Hence, the $\ddagger\KK$-morphisms are of the form $(\phi,\pi)$, where $\phi$ is a $\KK$-morphism and $\pi$ is a homomorphism which is a left inverse of $\phi$.  
 
 \begin{remark}\label{e=0} 
If $\KK$ is a category of finite-dimensional $C^*$-algebras and embeddings, then it has the near amalgamation property (NAP) if and only if it has the amalgamation property (\cite[Lemma 3.2]{Eagle}), namely, with $\epsilon = 0$. Similarly, the near proper amalgamation property of $\ddagger \KK$ is equivalent to the proper amalgamation property of $\ddagger \KK$. Also in this case, the  Fra\"\i ss\'e sequence $(\U_n,\phi_n^m)$, whenever it exists for $\KK$, satisfies the Fra\"\i ss\'e condition (F) of Theorem \ref{generic} with $\epsilon = 0$. Therefore in this section (F) refers to the following condition.
\begin{itemize}
    \item[(F)]  for every $n\in \mathbb N$ and for every $\KK$-arrow $\gamma: \mathcal U_n \to \D$, there are $m\geq n$ and $\KK$-arrow $\delta: \D \to \mathcal U_m$ such that $\phi_n^m = \delta \circ \gamma$.
\end{itemize}
 \end{remark}

\begin{lemma} \label{L1-L3} $\KK \subseteq \LL\KK$ has the almost factorization property.
\end{lemma}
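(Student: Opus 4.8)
The plan is to verify the definition of the almost factorization property directly. We are given a $\KK$-sequence $(X_n, f_n^m)$ of finite-dimensional $C^*$-algebras with left-invertible connecting maps, its limit $X_\infty$ in $\LL\KK$, an $\LL\KK$-object $A$ which must itself be finite-dimensional (since $\KK$-objects are), a left-invertible embedding $g\colon A\to X_\infty$, and $\epsilon>0$; we must produce $n$ and a $\KK$-arrow $g'\colon A\to X_n$ with $\|f_n^\infty\circ g'-g\|\le\epsilon$.

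First I would use the fact that $A=g[A]$ is a finite-dimensional subalgebra of $X_\infty=\overline{\bigcup_n f_n^\infty[X_n]}$, so by Lemma~\ref{twisting} there are $m\in\mathbb N$ and a unitary $u$ in $\widetilde{X_\infty}$ with $u^*g[A]u\subseteq f_m^\infty[X_m]$ and $\|1-u\|$ as small as we like; this gives a homomorphism $h\colon A\to X_m$ defined by $h=(f_m^\infty)^{-1}\circ\Ad_u\circ g$ (using that $f_m^\infty$ is injective with image containing $u^*g[A]u$), and $\Ad_{u^*}\circ f_m^\infty\circ h=g$, so $\|f_m^\infty\circ h-g\|=\|\Ad_u\circ f_m^\infty\circ h-\Ad_u\circ g\|$ can be made small, say $<\epsilon/2$. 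Then I would apply Lemma~\ref{fd-af} (or rather the uniqueness-up-to-unitary argument underlying it): since $g$ and $f_m^\infty\circ h$ are close homomorphisms $A\to X_\infty$, choosing the bound below $1$ forces $K_0(g)=K_0(f_m^\infty\circ h)$, hence by \cite[Lemma 7.3.2]{K-theory Rordam} there is a unitary $w\in\widetilde{X_\infty}$ with $\Ad_w\circ f_m^\infty\circ h=g$ exactly.

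The remaining point — and the one requiring the most care — is that $h$ need not be left-invertible, whereas a $\KK$-arrow must be. Here I would invoke Lemma~\ref{sub-retract}: $A$ is a retract of $X_\infty$ via the pair $(g,\pi)$ for a suitable left inverse $\pi$ (which exists because $g$ is an $\LL\KK$-arrow, hence left-invertible), so by part~(2) of that lemma there is $m_0$ such that $A$ is a retract of $X_{m'}$ for every $m'\ge m_0$. Enlarging $m$ to exceed $m_0$, fix a left-invertible embedding $h_0\colon A\hookrightarrow X_m$. Now $h_0$ and $h$ are both embeddings of the finite-dimensional algebra $A$ into $X_m$, but they may have different $K_0$-data; I would instead push forward along the sequence: replacing $X_m$ by $X_k$ for large $k$ and composing with $f_m^k$, and using that $f_m^k$ is left-invertible, one arranges that $f_m^k\circ h$ and the left-invertible embedding $f_m^k\circ h_0$ become unitarily equivalent inside $X_k$ (after possibly further enlarging $k$ so that a unitary implementing the near-equivalence of the images in $X_\infty$ can be approximated by one in $X_k$, again via Lemma~\ref{twisting}/\cite[Theorem III.3.5]{Davidson}); conjugating $h_0$ by that unitary yields a genuine $\KK$-arrow $g'\colon A\to X_k$ with $K_0(g')=K_0(f_m^k\circ h)$, whence $f_k^\infty\circ g'$ and $f_m^\infty\circ h=\Ad_{w^*}\circ g$ agree up to a unitary close to $1$, so $\|f_k^\infty\circ g'-g\|\le\epsilon$.

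Alternatively, and more cleanly, one can observe that the near proper amalgamation data is not needed at all: take $g'$ to be $f_m^\infty$-lift obtained by combining Lemma~\ref{sub-retract}(2) applied to the retract $A$ of $X_\infty$ (giving left-invertibility for free on some $X_m$) with Lemma~\ref{fd-af} (adjusting the lift by an inner automorphism of $X_m$ to make the triangle commute up to $\epsilon$ after composing with $f_m^\infty$). The main obstacle is purely bookkeeping: ensuring the lift can simultaneously be chosen left-invertible (a $\KK$-arrow) and $\epsilon$-close after post-composition with $f_m^\infty$; both Lemma~\ref{twisting} and Lemma~\ref{sub-retract} are tailored exactly for this, so once they are assembled in the right order the estimate is routine. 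I expect the write-up to be short, citing Lemmas~\ref{twisting}, \ref{fd-af}, and \ref{sub-retract}.
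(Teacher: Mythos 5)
Your first step coincides with the paper's proof: apply Lemma~\ref{twisting} with $\|1-u\|<\epsilon/2$, set $h=(f_m^\infty)^{-1}\circ\Ad_u\circ g$ (well defined since $u^*g[A]u\subseteq f_m^\infty[X_m]$ and $f_m^\infty$ is injective), and conclude $\|f_m^\infty\circ h-g\|<\epsilon$. The subsequent appeal to Lemma~\ref{fd-af} to upgrade this to exact equality via a further unitary is harmless but unnecessary, since the almost factorization property only asks for $\epsilon$-closeness.

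The gap is in how you treat left-invertibility. You assert that ``$h$ need not be left-invertible'' and propose to repair this by taking an independently chosen left-invertible embedding $h_0\colon A\hookrightarrow X_m$ (via Lemma~\ref{sub-retract}(2)) and making $f_m^k\circ h$ and $f_m^k\circ h_0$ unitarily equivalent for large $k$. That step can fail: unitary equivalence of homomorphisms from a finite-dimensional algebra is governed by multiplicity ($K_0$) data, and an arbitrary left-invertible $h_0$ need not have the same multiplicities as $h$. For instance, with $A=\mathbb C$ and $X_m=M_2\oplus\mathbb C$, the maps $h(1)=(e_{11},1)$ and $h_0(1)=(0,1)$ are both left-invertible but never become unitarily equivalent after composing with further embeddings, since multiplicities only get multiplied along the sequence. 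The point you are missing --- and it is exactly the content of the proof of Lemma~\ref{sub-retract}(2), which the paper simply reuses here --- is that $h$ \emph{is already} left-invertible: if $\pi\colon X_\infty\surj A$ is a left inverse of $g$, then $\theta(x)=\pi\bigl(u\,f_m^\infty(x)\,u^*\bigr)$ is a left inverse of $h$, because $\theta(h(a))=\pi(uu^*g(a)uu^*)=\pi(g(a))=a$. With that observation the proof ends after your first paragraph; no second embedding, no $K_0$ matching, and no further unitary conjugation are needed.
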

\begin{proof}
Suppose $\B \in \LL\KK$ is the limit of the $\KK$-sequence $(\B_n, \psi_n^{m})$ and $(\theta_n^m)$ is a compatible left inverse of $(\psi_n^{n+1})$.
Assume $\D$ is a $\KK$-object and $\phi: \D \hookrightarrow \B$ is an $\LL\KK$-arrow with a left inverse $\pi: \B \surj \D$.  For given $\epsilon>0$, find $n$ and a unitary $u$ in $\widetilde\B$ such that $u^* \phi[\D] u \subseteq \psi_n^\infty [\B_n]$ and $\|u - 1\|<\epsilon/2$ (Lemma \ref{twisting}). Define $\psi: \D \hookrightarrow \B_n$ by $\psi(d) = \theta_n^\infty (u^*\phi(d) u)$. Then $\psi$ has a left inverse $\theta: \B_n \twoheadrightarrow \D$ defined by $\theta(x) =\pi(u\psi_n^\infty(x) u^*)$ (see the proof of Lemma \ref{sub-retract} (2)).
Condition $\|u - 1\|<\epsilon/2$ implies that $\| \psi_n^\infty(\psi(d)) -  \phi(d)\|<\epsilon$, for every $d$ in the unit ball of $\D$.
\end{proof}

\begin{lemma}
$\KK$ is separable.
\end{lemma}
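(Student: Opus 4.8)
The plan is to exhibit an explicit countable dominating subcategory $\mathfrak{C}$ of $\KK$. Since every object of $\KK$ is a finite-dimensional $C^*$-algebra, up to isomorphism each object is of the form $M_{k_1} \oplus \dots \oplus M_{k_r}$ for some finite tuple $(k_1,\dots,k_r)$ of positive integers, so the objects of $\KK$ fall into only countably many isomorphism classes. First I would fix, for each isomorphism class of $\KK$-objects, one representative; let $\mathfrak{C}_0$ be this countable set of objects. The key remaining point is that between any two finite-dimensional $C^*$-algebras there are, up to the equivalence that matters here, only countably many (in fact finitely many) left-invertible embeddings: by Lemma \ref{fd-af} (or directly, since $K_0$ of a finite-dimensional algebra is a finitely generated free abelian group), two homomorphisms $\D \to \E$ between finite-dimensional algebras that induce the same $K_0$-map are unitarily equivalent, and the possible $K_0$-maps form a finite set once the multiplicities are bounded by $\dim \E$. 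Thus for each ordered pair $(\D,\E)$ from $\mathfrak{C}_0$ I would pick one representative $\KK$-arrow in each unitary-equivalence class of left-invertible embeddings $\D \hookr \E$; this gives a countable set of arrows, and I let $\mathfrak{C}$ be the subcategory they generate (closing under composition, which stays countable).

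Next I must verify the two conditions in the definition of (SEP). For the first, given any $A \in \KK$, there is by construction an isomorphism $A \to C$ with $C \in \mathfrak{C}_0 \subseteq \mathfrak{C}$, and an isomorphism is a left-invertible embedding, hence a $\KK$-arrow; so $\mathfrak{C}$ is "dense" in the required sense. For the second condition, suppose $A \in \mathfrak{C}$ and $\phi \colon A \to B$ is a $\KK$-arrow with $B$ an arbitrary $\KK$-object. Fix an isomorphism $j \colon B \to C$ with $C \in \mathfrak{C}_0$; then $\psi := j$ is a $\KK$-arrow $B \to C$ with $C \in \mathfrak{C}$, and $\psi \circ \phi \colon A \to C$ is a left-invertible embedding between objects of $\mathfrak{C}_0$. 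By the choice of $\mathfrak{C}$ there is a $\mathfrak{C}$-arrow $\alpha \colon A \to C$ unitarily equivalent to $\psi \circ \phi$; replacing $\alpha$ by $\Ad_u \circ \alpha$ for the appropriate unitary $u \in \widetilde{C}$ (which is again a $\mathfrak{C}$-arrow provided we arranged $\mathfrak{C}$ to be closed under inner automorphisms, or we simply absorb the unitary into the representative) we may in fact take $\alpha = \psi\circ\phi$, so that $d(\alpha, \psi \circ \phi) = 0 < \epsilon$. This establishes (SEP), i.e. that $\KK$ is separable.

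The main subtlety — the step I would be most careful about — is the bookkeeping that keeps $\mathfrak{C}$ genuinely countable while remaining a subcategory and witnessing both clauses simultaneously: one must make sure the inner automorphisms needed to turn "unitarily equivalent to" into "equal to" are themselves available inside $\mathfrak{C}$, or else phrase the second clause with the unitary absorbed. A clean way is to let $\mathfrak{C}$ consist of all finite compositions of the chosen representative arrows together with all inner automorphisms $\Ad_u$ of objects of $\mathfrak{C}_0$ implemented by unitaries with entries in $\mathbb{Q}[i]$ (a countable dense set), and then invoke Lemma \ref{fd-af} to replace an exact unitary conjugacy by an approximate one, which costs an arbitrarily small $\epsilon$ — this is exactly the slack that (SEP) allows. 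Everything else is routine: closure of $\mathfrak{C}$ under composition is immediate, countability is clear since each ingredient set is countable, and the density clause is trivial.
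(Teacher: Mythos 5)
Your proof is correct and follows essentially the same route as the paper, which simply observes that there are countably many $\KK$-objects up to isomorphism and that each hom-set of embeddings between two finite-dimensional $C^*$-algebras is a separable metric space, hence a countable dominating subcategory exists. Your version merely makes the countable dense set of arrows explicit (finitely many unitary-equivalence classes, detected by $K_0$, plus a countable dense set of unitaries), which is a fleshed-out form of the same argument.
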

\begin{proof}
There are, up to isomorphisms, countably many $\KK$-objects, namely finite sums of matrix algebras. The set of all embeddings between two fixed finite-dimensional $C^*$-algebras is a separable metric space. Thus, $\KK$ trivially has a countable dominating subcategory.
\end{proof}

The following statement is a direct consequence of Lemma~\ref{EPlim-retract}.

\begin{corollary}\label{CORlim-retract}
Suppose $\KK$ is a Fra\"\i ss\'e category of $C^*$-algebras with the Fra\"\i ss\'e limit $\mathcal U$ and $\ddagger \KK$ has the proper amalgamation property. Then $\mathcal U$ is a split-extension of every AF-algebra $\B$ in $\LL\KK$. In particular, $\mathcal U$ maps onto any AF-algebra in $\LL$.
\end{corollary}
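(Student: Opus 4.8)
The plan is to read this off directly from Lemma~\ref{EPlim-retract}, so the only work is to check that its hypotheses hold in the present setting. Since by the standing convention of this section every $\KK$-arrow is a left-invertible embedding, the forgetful functor $(\phi,\pi)\mapsto\phi$ identifies $\dagger\KK$ with $\KK$ itself; in particular $\dagger\KK$ is a Fra\"\i ss\'e category whose Fra\"\i ss\'e limit is precisely $\mathcal U$. By hypothesis $\ddagger\KK$ has the proper amalgamation property (the $\epsilon=0$ version), and by Remark~\ref{e=0} the Fra\"\i ss\'e sequence $(\mathcal U_n,\phi_n^m)$ of $\KK$ satisfies condition (F) with $\epsilon=0$. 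Hence we land exactly in the first (``$\epsilon=0$'') case treated in the proof of Lemma~\ref{EPlim-retract}, and we do not even need $\LL\KK$ to be enriched over complete metric spaces.

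Next I would recall that, by the very definition of $\LL\KK$, an arbitrary object $\B\in\LL\KK$ is the limit of some $\KK$-sequence, equivalently of a $\dagger\KK$-sequence. Lemma~\ref{EPlim-retract} then produces $\LL\KK$-arrows $\alpha\colon\B\to\mathcal U$ and a homomorphism $\beta\colon\mathcal U\to\B$ with $\beta\circ\alpha=\id_{\B}$. The left-invertible embedding $\alpha$ together with its left inverse $\beta$ is, in the terminology of Definition~\ref{retract-def}, exactly the statement that $\B$ is a retract of $\mathcal U$, i.e. that $\mathcal U$ is a split-extension of $\B$.

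For the final clause, $\beta\circ\alpha=\id_{\B}$ forces $\beta$ to be surjective, since every $b\in\B$ equals $\beta(\alpha(b))$; thus $\beta\colon\mathcal U\surj\B$ realizes every object $\B$ of $\LL\KK$ as a quotient of $\mathcal U$, and every such $\B$ is an AF-algebra, being an inductive limit of finite-dimensional $C^*$-algebras.

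There is no real obstacle here: all the substance has already been absorbed into Lemma~\ref{EPlim-retract}. The only two points deserving a moment's care are the identification $\dagger\KK=\KK$ (which is what guarantees that the limit ``$U$'' of that lemma is our $\mathcal U$), and the appeal to Remark~\ref{e=0}, which places us in the $\epsilon=0$ regime and thereby sidesteps any completeness question for the hom-sets $\LL\KK(\B,\mathcal U)$ and $\LL\KK(\mathcal U,\B)$.
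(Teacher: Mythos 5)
Your proposal is correct and follows exactly the paper's route: the paper simply declares the corollary ``a direct consequence of Lemma~\ref{EPlim-retract}'', and your verification of its hypotheses (the identification $\dagger\KK=\KK$, the $\epsilon=0$ regime via Remark~\ref{e=0}, and the surjectivity of the left inverse $\beta$) just makes explicit what the paper leaves implicit.
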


\section{AF-algebras with  Cantor property as  Fra\"\i ss\'e limits}\label{Ax-sec} 
Suppose $\KK$ is a category of (not necessarily all) finite-dimensional $C^*$-algebras, closed under isomorphisms, and $\KK$-arrows are left-invertible embeddings.
\begin{definition}\label{closed-def}
We say $\KK$ is $\oplus$-\emph{stable} if it satisfies the following conditions.
\begin{enumerate}
\item If $\D$ is a $\KK$-object, then so is any retract (ideal) of $\D$,
\item $\D\oplus \E \in \KK$ whenever $\D, \E\in \KK$.
\end{enumerate}
\end{definition}

 In general $0$ is a retract of any $C^*$-algebra and therefore it is the initial object of any $\oplus$-stable category, unless, when working with the unital categories (when all the $\KK$-arrows are unital), which in that case $0$ is not a $\KK$-object anymore. Unital categories are briefly discussed in Section \ref{unital-sec}.

\begin{theorem}\label{closed-Fraisse}
 Suppose $\KK$ is a $\oplus$-stable category. Then $\ddagger \KK$ has the proper amalgamation property. In particular, $\KK$  is a  Fra\"\i ss\'e category.
\end{theorem}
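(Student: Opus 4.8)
The plan is to prove the two assertions in sequence, deriving the Fra\"\i ss\'e property from the proper amalgamation property together with the cheap separability and joint-embedding facts already established.

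First I would verify the joint embedding property of $\dagger\KK$: given $\KK$-objects $\D$ and $\E$, the object $\D\oplus\E$ lies in $\KK$ by $\oplus$-stability, and the canonical inclusions $\D\hookr\D\oplus\E$ and $\E\hookr\D\oplus\E$ (with the coordinate projections as left inverses) are $\ddagger\KK$-arrows, so (JEP) holds. Separability was checked above (there are only countably many isomorphism types and each hom-set is a separable metric space), and $\KK\subseteq\LL\KK$ has the almost factorization property by Lemma~\ref{L1-L3}. So by Remark~\ref{e=0} the only remaining ingredient for ``$\KK$ is a Fra\"\i ss\'e category'' is the (near) amalgamation property of $\dagger\KK$, which follows immediately once we establish the stronger proper amalgamation property of $\ddagger\KK$.

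The heart of the proof is therefore the proper amalgamation property of $\ddagger\KK$. Given $\ddagger\KK$-arrows $(\phi,\pi)\colon\A\to\B$ and $(\psi,\theta)\colon\A\to\C$, I would use Proposition~\ref{fact1} to split both embeddings: write $\B\cong\B_0\oplus\B_1$ with $\phi(a)=(\phi_0(a),\phi_1(a))$, $\phi_0\colon\A\to\B_0$ an isomorphism and $\pi$ the composition of the first projection with $\phi_0^{-1}$; similarly $\C\cong\C_0\oplus\C_1$ with $\psi_0\colon\A\to\C_0$ an isomorphism. Up to identifying $\B_0$ and $\C_0$ with $\A$ via these isomorphisms, the situation is that $\B=\A\oplus\B_1$ and $\C=\A\oplus\C_1$ with the obvious inclusions and first-coordinate projections. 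Now set $\D=\A\oplus\B_1\oplus\C_1$, which is a $\KK$-object by $\oplus$-stability, and take $(\phi',\pi')\colon\B\to\D$ to be the inclusion $\A\oplus\B_1\hookr\A\oplus\B_1\oplus\C_1$ with the projection forgetting the $\C_1$ coordinate, and $(\psi',\theta')\colon\C\to\D$ the inclusion $\A\oplus\C_1\hookr\A\oplus\B_1\oplus\C_1$ with the projection forgetting $\B_1$. Then $\phi'\circ\phi$ and $\psi'\circ\psi$ are both the inclusion $\A\hookr\D=\A\oplus\B_1\oplus\C_1$, so they are equal (not just $\epsilon$-close); $\pi'\circ\pi$ and $\theta'\circ\theta$ are both the projection $\D\surj\A$, hence equal; and the two ``mixed'' identities $\phi\circ\theta=\pi'\circ\psi'$ and $\psi\circ\pi=\theta'\circ\phi'$ are checked directly on the three summands --- on $\A$ both sides act as the identity, and on $\B_1$ (resp.\ $\C_1$) both sides annihilate it. So the square commutes exactly, i.e.\ $\epsilon=0$ works.

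I expect the main (really the only) subtlety to be bookkeeping: Proposition~\ref{fact1} identifies $\B_0$ with $\A$ only up to the isomorphism $\phi_0$, so one must be careful that after these identifications the maps $\phi'$, $\psi'$ and their left inverses are genuine homomorphisms of the right form and that composition really yields equality rather than an approximate equality. Conjugating by the isomorphisms $\phi_0,\psi_0$ (as in the proof of Lemma~\ref{matrix-absorbing}) handles this cleanly. Once proper amalgamation of $\ddagger\KK$ is in hand, forgetting the left inverses gives the amalgamation property of $\dagger\KK$, and combined with (JEP), separability, and Remark~\ref{e=0} this shows $\KK$ is a Fra\"\i ss\'e category, completing the proof.
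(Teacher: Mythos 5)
Your overall skeleton matches the paper's: joint embedding via direct sums, separability, and the reduction of the Fra\"\i ss\'e property to the proper amalgamation property of $\ddagger\KK$, with the amalgam object $\A\oplus\B_1\oplus\C_1$ built from the decompositions of Proposition~\ref{fact1}. But the construction of the two arrows into the amalgam has a genuine gap. Proposition~\ref{fact1} gives $\phi(a)=(\phi_0(a),\phi_1(a))$ where the second component $\phi_1\colon\A\to\B_1$ is in general a \emph{nonzero} homomorphism; only the left inverse $\pi$ is the first-coordinate projection. Identifying $\B_0$ with $\A$ via $\phi_0$ does not turn $\phi$ into ``the obvious inclusion'' $a\mapsto(a,0)$: the component $\phi_1$ survives the identification because it lands in $\B_1$, not in $\B_0$. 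Consequently, with your $\phi'$ and $\psi'$ taken to be the coordinate inclusions, one computes $\phi'(\phi(a))=(a,\phi_1(a),0)$ while $\psi'(\psi(a))=(a,0,\psi_1(a))$, and these differ whenever $\phi_1\neq 0$ or $\psi_1\neq 0$. This is not an $\epsilon$-bookkeeping issue: for $\A=\mathbb C$, $\B=\mathbb C\oplus\mathbb C$, $\phi(a)=(a,a)$, $\C=\A$, $\psi=\id$, your construction gives $\|\phi'\circ\phi-\psi'\circ\psi\|=1$. The mixed identities such as $\psi\circ\pi=\theta'\circ\phi'$ fail for the same reason when $\psi_1\neq 0$.

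The repair is exactly where the paper's proof does its real work: the arrows into $\A\oplus\B_1\oplus\C_1$ must carry cross-terms. Setting $\mu=\psi_1\circ\pi\colon\B\to\C_1$ and $\nu=\phi_1\circ\theta\colon\C\to\B_1$, one defines $\phi'(b_0,b_1)=(\phi_0^{-1}(b_0),\,b_1,\,\mu(b_0,b_1))$ and $\psi'(c_0,c_1)=(\psi_0^{-1}(c_0),\,\nu(c_0,c_1),\,c_1)$; then both composites equal $a\mapsto(a,\phi_1(a),\psi_1(a))$, the left inverses are still the evident coordinate projections composed with $\phi_0$, $\psi_0$, and all four identities of proper amalgamation hold with $\epsilon=0$. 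The remaining ingredients of your argument (JEP via direct sums, separability, and deducing the Fra\"\i ss\'e property from proper amalgamation) are correct as stated.
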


\begin{proof}
 Suppose $\D, \E$ and $\F$ are $\KK$-objects and $\ddagger\KK$-arrows $(\phi, \pi):\D \to \E $ and $(\psi, \theta): \D \to \F$ are given. Since $\phi$ and $\psi$ are left-invertible, by Proposition \ref{fact1} we can identify $\E$ and $\F$ with $\E_0 \oplus\E_1$ and $\F_0 \oplus \F_1$, respectively, and find $\phi_0, \phi_1, \psi_0, \psi_1$ such that
 \begin{itemize}
 \item $\phi_0: \D \to \E_0$  and $\psi_0: \D \to \F_0$  are isomorphisms,
  \item $\phi_1: \D \to \E_1$ and   $\psi_1: \D \to \F_1$ are  homomorphisms,
  \item $\phi(d) = (\phi_0(d), \phi_1(d))$ and $\psi(d) = (\psi_0(d), \psi_1(d))$ for every $d\in \D$,
  \item $\pi(e_0, e_1) = \phi_0^{-1}(e_0)$ and $\theta(f_0, f_1) = \psi_0^{-1}(f_0)$.
 \end{itemize}
  
Define homomorphisms $\mu: \E \rightarrow \F_1$ and $\nu: \F \rightarrow \E_1$ by $\mu = \psi_1 \circ \pi$ and $\nu = \phi_1 \circ \theta$ (see Diagram (\ref{diag1})). Since $\KK$ is $\oplus$-stable $\D\oplus \E_1\oplus \F_1$ is a $\KK$-object. 
Define $\KK$-arrows $\phi': \E \hookrightarrow \D \oplus \E_1\oplus \F_1$ and $\psi': \F \hookrightarrow \D\oplus \E_1\oplus \F_1$ by 
$$\phi'(e_0,e_1) = (\phi_0^{-1}(e_0), e_1, \mu(e_0, e_1))$$
 and 
 $$\psi'(f_0, f_1) = (\psi_0^{-1}(f_0), \nu(f_0, f_1), f_1).$$
  For every $d\in \D$ we have 
$$
\phi'(\phi(d)) = \phi'(\phi_0(d), \phi_1(d))= (d, \phi_1(d), \mu(\phi(d))) = (d, \phi_1(d), \psi_1(d)) 
$$
and
$$
\psi'(\psi(d)) =\psi'(\psi_0(d), \psi_1(d)) =  (d, \nu(\phi(d)), \psi_1(d)) = (d, \phi_1(d), \psi_1(d)) .
$$

\begin{equation}\label{diag1}
\begin{tikzcd}[column sep=large]
 &  \mathcal \E \arrow[dr, hook',   "\phi'"] \arrow[dd, shift left=0.5ex, "\mu"] \arrow[dl, shift left=1ex, two heads, "\pi"]  \\ 
\D \arrow[ur, hook', "\phi"] \arrow[dr,  hook,  "\psi"] 
& & \D \oplus \E_1\oplus \F_1 \arrow[ul, shift left=1ex, two heads, "\pi'"] \arrow[dl,shift left=1ex, two heads, "\theta'"]\\
& \F \arrow[ur, hook, "\psi^\prime "]   \arrow[uu,shift left=0.5ex, "\nu"] \arrow[ul, shift left=1ex, two heads, "\theta"]
\end{tikzcd}
\end{equation}
Therefore $\phi^\prime \circ \phi = \psi^\prime \circ \psi $. The map $\pi^\prime: \D \oplus \E_1\oplus \F_1  \to \E$ defined by $\pi^\prime(d,e_1, f_1) = (\phi_0(d), e_1)$ is a left inverse of $\phi^\prime$. Similarly the map  $\theta^\prime: \D \oplus \E_1\oplus \F_1  \to \F$ defined by $\theta^\prime(d,e_1, f_1) = (\psi_0(d), f_1)$ is a left inverse of $\psi^\prime$. 
Therefore 
$(\phi^\prime, \pi^\prime): \E \to \D \oplus \E_1\oplus \F_1 $ and $(\psi^\prime, \theta^\prime): \E \to \D \oplus \E_1\oplus \F_1 $ are $\KK$-arrows.
We have 
$$\pi \circ \pi^\prime (d, e_1, f_1) = \pi (\phi_0(d), e_1) = d, $$
$$\theta \circ \theta^\prime (d, e_1, f_1) = \theta (\psi_0(d), e_1) = d. $$
Hence $\pi \circ \pi^\prime =\theta \circ \theta^\prime $. Also 

\begin{align*}
\theta^\prime \circ \phi^\prime(e_0, e_1) &= \theta^\prime  (\phi_0^{-1}(e_0), e_1, \mu(e_0, e_1))  
= (\psi_0(\phi_0^{-1}(e_0)), \mu(e_0,e_1)) \\
& =(\psi_0(\pi(e_0, e_1)), \psi_1(\pi(e_0,e_1))) = \psi (\pi(e_0,e_1)). 
\end{align*}

So $\theta^\prime \circ \phi^\prime = \psi \circ\pi$ and similarly we have $\phi \circ \theta  = \pi^\prime \circ \psi^\prime$. This shows that $\ddagger\KK$ has proper amalgamation property. Since $\KK$ is separable and has an initial object, in particular, it is  a  Fra\"\i ss\'e category.
\end{proof}

Therefore any $\oplus$-stable category $\KK$ has a unique  Fra\"\i ss\'e sequence; a $\KK$-sequence which satisfies (F).

\begin{notation}
Let $\A_\KK$ denote the Fra\"\i ss\'e limit of the $\oplus$-stable category $\KK$. 
\end{notation}

The AF-algebra $\A_\KK$ is $\KK$-universal and almost $\KK$-homogeneous, by Theorem~\ref{uni-hom} and Lemma~\ref{L1-L3}. In fact, $\A_\KK$ is $\KK$-homogeneous (where $\epsilon$ is zero). To see this, suppose $\F$ is a finite-dimensional $C^*$-algebra in $\KK$ and $\phi_i: \F \hookr \A$ $(i=0,1)$ are left-invertible embeddings. By the almost $\KK$-homogeneity, there is an automorphism $\eta: \A_\KK \to \A_\KK$ such that $\|\eta \circ \phi_0 - \phi_1\|< 1$. There exists (Lemma \ref{fd-af}) a unitary $u\in\widetilde\A$ such that $\Ad_u \circ \eta \circ \phi_0 = \phi_1$. The automorphism $\Ad_u \circ \eta$ witnesses the $\KK$-homogeneity of $\A_\KK$.

Moreover, since $\ddagger \KK$ has the proper amalgamation property, every AF-algebra in $\LL\KK$, is a retract of $\A_\KK$ (Corollary \ref{CORlim-retract}).

\begin{corollary}\label{inj. uni. AF} Suppose $\KK$ is a $\oplus$-stable category, then 
\begin{itemize}
\item (universality) Every AF-algebra which is the limit of a $\KK$-sequence, is a retract of $\A_\KK$.
\item ($\KK$-homogeneity) For every finite-dimensional $C^*$-algebra $\F\in \KK$ and  left-invertible embeddings $\phi_i: \F \hookr \A_\KK$ ($i=0,1$), there is an automorphism $\eta: \A_\KK \to \A_\KK$ such that $\eta \circ \phi_0 = \phi_1$.
\end{itemize}
\end{corollary}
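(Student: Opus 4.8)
The plan is to deduce Corollary \ref{inj. uni. AF} directly from the machinery already assembled, rather than re-proving anything from scratch. By Theorem \ref{closed-Fraisse}, an $\oplus$-stable $\KK$ is a Fra\"\i ss\'e category and $\ddagger \KK$ has the proper amalgamation property; by Lemma \ref{L1-L3}, $\KK \subseteq \LL\KK$ has the almost factorization property; and by the discussion preceding the corollary, $\A_\KK$ exists as the Fra\"\i ss\'e limit and is $\KK$-homogeneous. So the two bullet points should each follow from one prior result, with a small amount of glue.

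For the universality bullet, I would invoke Corollary \ref{CORlim-retract}: since $\KK$ is Fra\"\i ss\'e with limit $\mathcal U = \A_\KK$ and $\ddagger \KK$ has the proper amalgamation property, $\A_\KK$ is a split-extension of every AF-algebra $\B$ in $\LL\KK$, i.e.\ every such $\B$ is a retract of $\A_\KK$. One only needs to note that the AF-algebras which arise as limits of $\KK$-sequences are exactly the objects of $\LL\KK$ (by definition of $\LL\KK$), so the statement is literally Corollary \ref{CORlim-retract} restricted to AF-algebras. For the $\KK$-homogeneity bullet, I would repeat the argument given in the paragraph just before the corollary statement: given left-invertible embeddings $\phi_0, \phi_1 \colon \F \hookr \A_\KK$ with $\F \in \KK$, the almost $\KK$-homogeneity from Theorem \ref{uni-hom} (applicable because $\KK \subseteq \LL\KK$ has the almost factorization property) yields an automorphism $\eta$ with $\|\eta \circ \phi_0 - \phi_1\| < 1$; then Lemma \ref{fd-af} provides a unitary $u \in \widetilde{\A_\KK}$ with $\Ad_u \circ \eta \circ \phi_0 = \phi_1$, and $\Ad_u \circ \eta$ is the desired automorphism (it is inner-by-automorphism, hence an automorphism of $\A_\KK$).

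I expect the main (and only real) obstacle to be bookkeeping: making sure that the hypotheses of the cited theorems are genuinely met in the $\oplus$-stable setting and that ``limit of a $\KK$-sequence'' coincides with ``$\LL\KK$-object'' so that Corollary \ref{CORlim-retract} applies verbatim. In particular one should confirm that $\LL\KK$ is enriched over \emph{complete} metric spaces (the operator-norm metric on left-invertible embeddings between $C^*$-algebras is complete on the closed set of such embeddings, as a uniform limit of $*$-homomorphisms that are left-invertible is again a left-invertible $*$-homomorphism — this uses that the left inverses also converge, which is exactly what the EP-pair formalism of Lemma \ref{EPlim-retract} handles), and that every $\LL\KK$-object is a limit of a $\KK$-sequence, which holds by the very definition of $\LL\KK$. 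Once these points are checked, nothing further is needed.

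Concretely, the proof reads: \emph{Universality.} If $\B$ is an AF-algebra that is the limit of a $\KK$-sequence, then $\B \in \LL\KK$, and by Theorem \ref{closed-Fraisse} together with Corollary \ref{CORlim-retract}, $\A_\KK$ is a split-extension of $\B$, i.e.\ $\B$ is a retract of $\A_\KK$. \emph{$\KK$-homogeneity.} Given $\F \in \KK$ and left-invertible embeddings $\phi_0, \phi_1 \colon \F \hookr \A_\KK$, almost $\KK$-homogeneity (Theorem \ref{uni-hom}, via Lemma \ref{L1-L3}) gives an automorphism $\eta$ of $\A_\KK$ with $\|\eta \circ \phi_0 - \phi_1\| < 1$; by Lemma \ref{fd-af} there is a unitary $u \in \widetilde{\A_\KK}$ with $\Ad_u \circ (\eta \circ \phi_0) = \phi_1$, so $\Ad_u \circ \eta$ is an automorphism of $\A_\KK$ carrying $\phi_0$ to $\phi_1$. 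This is precisely the argument sketched in the text preceding the corollary, now recorded as a formal proof. \qed
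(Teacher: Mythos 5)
Your proposal is correct and matches the paper's own argument essentially verbatim: universality is obtained by combining Theorem \ref{closed-Fraisse} with Corollary \ref{CORlim-retract}, and exact $\KK$-homogeneity is obtained from the almost $\KK$-homogeneity of Theorem \ref{uni-hom} (via Lemma \ref{L1-L3}) upgraded by the unitary from Lemma \ref{fd-af}. The only cosmetic difference is your worry about completeness of $\LL\KK(B,U)$, which the paper sidesteps via Remark \ref{e=0} (everything holds with $\epsilon=0$ in this setting), but this does not affect correctness.
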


We will describe the structure of $\A_\KK$ by showing that it has the Cantor property.
\begin{lemma}\label{A_K-CS}
Suppose $\KK$ is a $\oplus$-stable category, then $\A_\KK$ has the Cantor property.
\end{lemma}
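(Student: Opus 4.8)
The plan is to take the Fra\"\i ss\'e sequence $(\U_n, \phi_n^m)$ for $\KK$ guaranteed by Theorem~\ref{closed-Fraisse} (and Remark~\ref{e=0}, so that condition (F) holds with $\epsilon=0$), and to verify directly that its Bratteli diagram $\mathfrak D$ satisfies conditions (D0), (D1), (D2) of Definition~\ref{Bratteli}. Since $\A_\KK = \varinjlim(\U_n,\phi_n^m)$, this will show $\A_\KK$ has the Cantor property. Condition (D0) is essentially free: each $\phi_n^{n+1}$ is a left-invertible embedding, and by Proposition~\ref{fact1} a left-invertible embedding of a finite-dimensional algebra splits off an isomorphic copy of the domain as a direct summand of the codomain, so every node $(n,s)$ has a successor $(n+1,t)$ with the same dimension and $(n,s)\to(n+1,t)$; equivalently $(\U_n,\phi_n^m)$ is a left-invertible sequence.

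For (D1) and (D2) the idea is the same in both cases: start from $\U_n$, build a suitable $\KK$-object $\D$ together with a left-invertible embedding $\gamma\colon \U_n \hookr \D$ that ``encodes'' the configuration we want to see, and then apply (F) to obtain $m\geq n$ and a $\KK$-arrow $\delta\colon \D\to\U_m$ with $\phi_n^m = \delta\circ\gamma$; reading off multiplicities then gives the required paths inside $\mathfrak D$. Concretely, for (D1), given a node $(n,s)$ corresponding to a matrix summand $M_k$ of $\U_n$, take $\D = \U_n \oplus M_k$ (an object of $\KK$ by $\oplus$-stability, since $M_k$ is a retract of $\U_n$) and let $\gamma\colon\U_n\hookr\D$ be the left-invertible embedding $a \mapsto (a, p_s(a))$, where $p_s\colon\U_n\surj M_k$ is the projection onto the $s$-th summand; this is left-invertible via the first coordinate projection. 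Applying (F) yields $\delta\colon\D\to\U_m$ with $\phi_n^m=\delta\circ\gamma$, and since $\delta$ is an embedding, the image of the ``new'' $M_k$ summand of $\D$ under $\delta$ lands faithfully in some summand $\U_{m,t'}$, while the image under $\phi_n^m$ of $\U_{n,s}$ lands faithfully in $\U_{m,t'}$ as well as in whatever summand $\U_{m,t}$ receives $\delta$ of the first copy; arranging (and slightly enlarging $m$ if needed, using (D0) to separate them) that $t\neq t'$ and $\dim(n,s)=\dim(m,t)=\dim(m,t')$ gives (D1).

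For (D2), given nodes $(n,s_1),\dots,(n,s_k),(n',s')$ and integers $x_1,\dots,x_k$ with $\sum_i x_i\dim(n,s_i)\leq \dim(n',s') =: \ell$, let $\D'=\bigoplus_{i\leq k}\U_{n,s_i}$, which is a retract of $\U_n$ and hence a $\KK$-object, and pick any embedding $\eta\colon\D'\hookr M_\ell$ realizing exactly multiplicity $x_i$ on the $i$-th summand (possible precisely because $\sum_i x_i\dim(n,s_i)\leq\ell$). Then $\D := \U_n \oplus M_\ell$ is a $\KK$-object, and $\gamma\colon\U_n\hookr\D$, $\gamma(a) = (a, \eta(p(a)))$ where $p\colon\U_n\surj\D'$ is the canonical projection, is left-invertible. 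Applying (F) gives $\delta\colon\D\to\U_m$ with $\phi_n^m=\delta\circ\gamma$; the restriction of $\delta$ to the $M_\ell$ summand is an embedding into some summand $\U_{m,t}$, and one checks that a summand of full dimension $\ell$ must actually be carried isomorphically (using that $\delta$ is an embedding and composing with a compatible left inverse, or simply by counting dimensions together with the absorption statement of Lemma~\ref{matrix-absorbing}), so $\dim(m,t)=\ell$ and the number of paths from $(n,s_i)$ to $(m,t)$ equals $\Mult_\eta(\U_{n,s_i},M_\ell)=x_i$, as required; if $\delta$ of the $M_\ell$ summand is not exactly a full summand one absorbs the extra part into another summand and re-applies (D0)/(D1) to isolate a genuine node, possibly after increasing $m$.

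\textbf{Main obstacle.} The delicate point is bookkeeping with multiplicities and making sure the ``target'' matrix summand one produces via (F) is a genuine node of the diagram of the right dimension — i.e. that $\delta$ really sends the designed matrix summand of $\D$ onto a full summand $\U_{m,t}$ rather than diagonally into a larger one together with unwanted extra multiplicity from the $\U_n$-part. Handling this requires combining (F) with Lemma~\ref{matrix-absorbing} and the splitting described in Proposition~\ref{fact1}, and using (D0) (and (D1)) to pass to a later stage where the relevant summands have been separated; none of this is conceptually hard, but it is the step where care is needed. The remaining conditions (D0) and separability are immediate.
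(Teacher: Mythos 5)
Your proposal is correct and follows essentially the same route as the paper: verify (D0)--(D2) for the Fra\"\i ss\'e sequence by building a $\oplus$-stable amalgam of the form $\U_n\oplus(\cdot)$ together with a left-invertible $\gamma$ encoding the desired configuration and then applying (F); your (D2) argument is the paper's almost verbatim, and for (D1) the paper simply uses the diagonal $\A_n\hookr\A_n\oplus\A_n$ in place of your $\U_n\oplus M_k$. The ``main obstacle'' you flag is resolved exactly as you suspect and as the paper does: since the arrow $\delta$ produced by (F) is itself left-invertible, Proposition~\ref{fact1} forces each simple summand of the amalgam to be carried isomorphically onto a genuine node of $\U_m$ (distinct summands to distinct nodes), so no extra separation or absorption step is needed, and Lemma~\ref{matrix-absorbing} then gives multiplicity $c=1$.
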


\begin{proof} 
  Suppose $\A_\KK = \varinjlim_n (\A_n, \phi_n^m)$, where $(\A_n, \phi_n^m)$ is a $\KK$-sequence, i.e., $(\A_n, \phi_n^m)$ is a left-invertible sequence of finite-dimensional $C^*$-algebras in $\KK$. Since $\A_\KK$ is the Fra\"\i ss\'e limit of $\KK$, we can suppose $(\A_n, \phi_n^m)$ satisfies (F). We claim that $(\A_n, \phi_n^m)$ satisfies (D0)--(D2) of Definition \ref{Bratteli}.  
  Suppose $\mathfrak D$ is the Bratteli diagram of $(\A_n, \phi_n^m)$ and $\A_n = \A_{n,1} \oplus \dots \oplus \A_{n,k_n}$ for every $n$, such that each $\A_{n,s}$ is a matrix algebra. 
  
  The condition (D0) is trivial since $\phi_n^m$ are left-invertible. 
To see (D1), fix $\A_{n,s}$. Note that since $\A_n$ is a $\KK$-object and $\KK$ is $\oplus$-stable, we have $\A_{n} \oplus \A_{n} \in \KK$.  Let $\gamma: \A_{n} \hookr \A_{n} \oplus \A_{n}$ be the left-invertible embedding defined by $\gamma(a) = (a,a)$.
 Use the Fra\"\i ss\'e condition (F) to find $\delta: \A_n \oplus \A_{n} \hookr \A_{m}$, for some $m\geq n$, such that $\delta \circ \gamma = \phi_n^m$. 
Since $\delta$ is left-invertible, there are distinct $(m,t)$ and $(m,t')$ in $\mathfrak D$ such that $\A_{m,t} \cong \A_{m,t'} \cong \A_{n,s}$. Then $\delta \circ \gamma  = \phi_n^m$ implies that $(n,s)\to (m,t)$ and $(n,s) \to (m,t')$ in $\mathfrak D$.

To see (D2)  assume $\D \subseteq \A_n$ is an ideal of $\A_n$ and $M_\ell$ is a retract of $\A_\KK$ and there is an embedding $\gamma:\D \hookr M_\ell$. Suppose $\A_n= \D \oplus \E$ for some $\E$. Since $\KK$ is $\oplus$-stable, $\D \oplus \E \oplus M_\ell$ is a $\KK$-object.  Therefore $\gamma': \D\oplus \E  \hookr \D \oplus \E  \oplus  M_\ell$ defined by $\gamma'(d,e) = (d, e, \gamma(d))$ is a  $\KK$-arrow. Then by $(F)$ there is a left-invertible embedding $\delta': \D \oplus \E  \oplus M_\ell \hookr \A_m$ for some $m\geq n$, such that 
\begin{equation}\label{eq-delta}
\delta'\circ \gamma' = \phi_n^m.    
\end{equation}
Since $\delta'$ is left-invertible, there is $(m,t)$ such that $\dim(\A_{m,t}) = \ell$ and $$\delta_{m,t}=\pi_{\A_{m,t}} \circ \delta|_{M_\ell} : M_\ell \hookr \A_{m,t}$$
is an isomorphism, where $\pi_{\A_{m,t}}: \A_m \surj \A_{m,t}$ is the canonical projection.
 Let 
 $$\phi_{m,t}=\pi_{\A_{m,t}} \circ \phi_n^m|_\D : \D \to \A_{m,t}.$$
By definition of $\gamma'$ and (\ref{eq-delta}) it is clear that $\phi_{m,t} = \delta_{m,t}\circ \gamma$ and that $\phi_{m,t}$ is also an embedding.  By Lemma \ref{matrix-absorbing} we have $ \Mult_{\phi_{m,t}}(\D, \A_{m,t}) = c \Mult_\gamma(\D , M_\ell ) $ for some natural number $c\geq 1$. Since $\delta_{m,t}$ is an isomorphism, we have $c=1$. This proves (D2).
\end{proof}

Next we show that every AF-algebra with  Cantor property  can be realized as the  Fra\"\i ss\'e limit of a suitable $\oplus$-stable category of finite-dimensional $C^*$-algebras and left-invertible embeddings. 
\subsection{The category $\KK_\A$} 

Suppose $\A$ is an AF-algebra with  Cantor property. Let $\KK_\A$ denote the category whose objects are finite-dimensional retracts of $\A$ and $\KK_\A$-arrows are left-invertible embeddings.
Let $\LL_\A$ be the category whose objects  are limits of $\KK_\A$-sequences. If $\B$ and $\mathcal C$ are $\LL_\A$-objects, an $\LL_\A$-arrow from $\B$ into $\mathcal C$ is a left-invertible embedding $\phi: \B \hookrightarrow \mathcal C$.

\begin{lemma}\label{K_A closed}
 $\KK_\A$ is a  Fra\"\i ss\'e category and  $\ddagger \KK_\A$ has the proper amalgamation property.
\end{lemma}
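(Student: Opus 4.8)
\textbf{Proof plan for Lemma \ref{K_A closed}.}
The plan is to show that $\KK_\A$ is a $\oplus$-stable category and then invoke Theorem \ref{closed-Fraisse}, which already gives both the proper amalgamation property of $\ddagger\KK_\A$ and the fact that $\KK_\A$ is a Fra\"\i ss\'e category (assuming separability and the existence of an initial object, both of which are automatic here: there are countably many isomorphism types of finite-dimensional $C^*$-algebras, and $0$ is a retract of $\A$, hence a $\KK_\A$-object serving as the initial object). So the entire task reduces to verifying the two clauses of Definition \ref{closed-def} for $\KK_\A$, i.e., that the class of finite-dimensional retracts of $\A$ is closed under passing to ideals and under finite direct sums.

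First I would check clause (1). Suppose $\D$ is a $\KK_\A$-object, so $\D$ is a finite-dimensional retract of $\A$, and let $\D'$ be an ideal of $\D$; I must show $\D'$ is a retract of $\A$. Since $\D$ is a retract of $\A$, by Lemma \ref{sub-retract}(2) there is $m$ such that $\D$ is a retract of $\A_m$; by Proposition \ref{fact1} this means $\A_m \cong \D \oplus \F$ for some finite-dimensional $\F$, and since $\D'$ is an ideal of $\D$, it is an ideal of $\A_m$ as well, hence a retract of $\A_m$ by Proposition \ref{fact1} again. Composing a left-invertible embedding $\D' \hookr \A_m$ with $\phi_m^\infty$ (and using the compatible left inverses from Lemma \ref{reverse-maps}) exhibits $\D'$ as a retract of $\A$. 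Clause (2) is precisely the content of Proposition \ref{sum-retract}: if $\D$ and $\E$ are finite-dimensional retracts of $\A$, then so is $\D \oplus \E$. Hence $\KK_\A$ is $\oplus$-stable.

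With $\oplus$-stability established, Theorem \ref{closed-Fraisse} applies directly: $\ddagger\KK_\A$ has the proper amalgamation property, and $\KK_\A$ is a Fra\"\i ss\'e category. I expect no serious obstacle here; the only point requiring minor care is confirming that $\KK_\A$ genuinely meets the standing hypotheses of Theorem \ref{closed-Fraisse} — namely that it is a category of finite-dimensional $C^*$-algebras closed under isomorphism with left-invertible embeddings as arrows (immediate from the definition of $\KK_\A$), separable (countably many isomorphism types of objects, each hom-set a separable metric space), and possessing an initial object ($0$, which is a retract of every $C^*$-algebra and in particular of $\A$). Everything else is a citation of results already in hand.
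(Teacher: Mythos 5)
Your proposal is correct and follows essentially the same route as the paper: reduce everything to $\oplus$-stability of $\KK_\A$ and invoke Theorem \ref{closed-Fraisse}, with clause (2) given by Proposition \ref{sum-retract}. The paper dismisses clause (1) as trivial (a retract of a retract is a retract, by composing left-invertible embeddings), whereas you detour through $\A_m$ via Lemma \ref{sub-retract}(2); both are fine, but the direct composition $\D' \hookr \D \hookr \A$ is shorter.
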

\begin{proof}
By Theorem \ref{closed-Fraisse}, it is enough to show that $\KK_\A$  is a $\oplus$-stable category.
 Condition (1) of Definition \ref{closed-def} is trivial. Condition (2) follows from Proposition \ref{sum-retract}.
\end{proof}

Again, Theorem \ref{uni-hom} guarantees the existence of a unique $\KK_\A$-universal and $\KK_\A$-homogeneous  AF-algebra in $\LL_\A$, namely the Fra\"\i ss\'e limit of $\KK_\A$.

\begin{theorem}\label{Cantor-closed-Fraisse}
The Fra\"\i ss\'e limit of $\KK_\A$ is $\A$.
\end{theorem}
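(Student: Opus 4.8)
The plan is to identify $\A$ with the Fra\"\i ss\'e limit $\A_{\KK_\A}$ by showing that some representing $\KK_\A$-sequence of $\A$ satisfies the Fra\"\i ss\'e condition (F), and then invoke the uniqueness of the Fra\"\i ss\'e limit. First I would fix a sequence $(\A_n,\phi_n^m)$ of finite-dimensional $C^*$-algebras with left-invertible connecting maps whose Bratteli diagram satisfies (D0)--(D2) of Definition \ref{Bratteli}, so that $\A=\varinjlim(\A_n,\phi_n^m)$. Each $\A_n$ is then a finite-dimensional retract of $\A$ (by Lemma \ref{reverse-maps}/Lemma \ref{sub-retract}), hence a $\KK_\A$-object, and each $\phi_n^m$ is a $\KK_\A$-arrow; so $(\A_n,\phi_n^m)$ is a $\KK_\A$-sequence with limit $\A$, which lies in $\LL_\A$.

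The heart of the argument is to verify condition (F): given $n$ and a $\KK_\A$-arrow $\gamma\colon\A_n\hookr\D$, where $\D$ is an arbitrary finite-dimensional retract of $\A$, I must produce $m\ge n$ and a left-invertible embedding $\delta\colon\D\hookr\A_m$ with $\delta\circ\gamma=\phi_n^m$. This is exactly what Lemma \ref{absorbing} delivers, provided $\D$ is a finite-dimensional retract of $\A$ --- which it is, by the definition of $\KK_\A$. So (F) follows immediately from Lemma \ref{absorbing}. (Here I should double-check that Lemma \ref{absorbing}'s hypothesis ``$\E$ a finite-dimensional retract of $\A$'' is met and that its conclusion, the commuting triangle $\delta\circ\gamma=\phi_n^m$ with $\delta$ left-invertible, is precisely (F) of Remark \ref{e=0}; it is.)

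Having shown that $(\A_n,\phi_n^m)$ is a Fra\"\i ss\'e sequence for $\KK_\A$, I conclude by uniqueness: by Lemma \ref{K_A closed}, $\KK_\A$ is a Fra\"\i ss\'e category, so by Theorem \ref{generic} it has a Fra\"\i ss\'e sequence, unique up to the approximate back-and-forth, and hence its limit $\A_{\KK_\A}$ is determined up to isomorphism (Theorem \ref{uni-hom}, using that $\KK_\A\subseteq\LL_\A$ has the almost factorization property by Lemma \ref{L1-L3}). Since our $(\A_n,\phi_n^m)$ satisfies (F) and has limit $\A$, we get $\A\cong\A_{\KK_\A}$.

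I expect the only real subtlety --- the ``main obstacle'' --- to be the bookkeeping in matching Lemma \ref{absorbing} exactly against condition (F): one must make sure that the $\KK_\A$-arrow $\gamma\colon\A_n\to\D$ appearing in (F) has domain precisely one of the terms $\A_n$ of the chosen representing sequence (not an arbitrary object), and that Lemma \ref{absorbing} is stated for left-invertible $\gamma$ with the $\phi_n^m$ from the fixed (D0)--(D2) sequence, so no re-indexing or perturbation is needed (in this $C^*$-algebraic setting (F) holds with $\epsilon=0$, cf.\ Remark \ref{e=0}). Everything else is a direct citation of results already established: $\oplus$-stability of $\KK_\A$ from Proposition \ref{sum-retract}, the Fra\"\i ss\'e property from Theorem \ref{closed-Fraisse}, and uniqueness of the limit from Theorem \ref{uni-hom}.
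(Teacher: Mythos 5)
Your proposal is correct and follows essentially the same route as the paper: fix a representing sequence satisfying (D0)--(D2), observe it is a $\KK_\A$-sequence, and verify the Fra\"\i ss\'e condition (F) by a direct appeal to Lemma \ref{absorbing}, concluding via uniqueness of the Fra\"\i ss\'e limit. The extra bookkeeping you flag (matching the hypotheses of Lemma \ref{absorbing} against (F) with $\epsilon=0$) is exactly the content the paper delegates to that lemma, so there is nothing missing.
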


\begin{proof}  There is a sequence $(\A_n, \phi_n^m)$  of finite-dimensional $C^*$-algebras and embeddings such that $\A = \varinjlim (\A_n, \phi_n^m)$ satisfies (D0)--(D2) of Definition \ref{Bratteli}. First note that by (D0), $(\A_n, \phi_n^m)$ is a $\KK_\A$-sequence and therefore $\A$ is an $\LL_\A$-object.
In order to show that $\A$ is the Fra\"\i ss\'e limit of  $\KK_\A$, we need to show that $(\A_n, \phi_n^m)$  satisfies condition (F). This is Lemma \ref{absorbing}.
\end{proof}

\begin{theorem}\label{A_K-characterization}
Suppose $\KK$ is a $\oplus$-stable category. $\A_\KK$ is the unique AF-algebra such that 
\begin{enumerate}
    \item it has the Cantor property,
    \item a finite-dimensional $C^*$-algebra is a retract of $\A_\KK$ if and only if it is a $\KK$-object.
\end{enumerate}
\end{theorem}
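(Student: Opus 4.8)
The plan is to verify the two listed properties for $\A_\KK$ and then establish uniqueness. Property (1) is exactly Lemma~\ref{A_K-CS}. For property (2), one direction is already recorded: every finite-dimensional $C^*$-algebra which is a $\KK$-object embeds into $\A_\KK$ via a $\KK$-arrow, and since $\ddagger\KK$ has the proper amalgamation property (Theorem~\ref{closed-Fraisse}), Corollary~\ref{CORlim-retract} shows such an embedding is left-invertible, so every $\KK$-object is a retract of $\A_\KK$. Conversely, if a finite-dimensional $C^*$-algebra $\D$ is a retract of $\A_\KK = \varinjlim(\A_n,\phi_n^m)$ (a $\KK$-sequence), then by Lemma~\ref{sub-retract}(2) there is $m$ such that $\D$ is a retract of $\A_m$; since $\A_m\in\KK$ and $\KK$ is $\oplus$-stable, the ideal of $\A_m$ isomorphic to $\D$ is again a $\KK$-object (Definition~\ref{closed-def}(1)), and $\KK$ is closed under isomorphism, so $\D\in\KK$. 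This settles (2).

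For uniqueness, suppose $\B$ is another AF-algebra satisfying (1) and (2). Since $\B$ has the Cantor property, Theorem~\ref{Cantor-closed-Fraisse} identifies $\B$ as the Fra\"\i ss\'e limit of the category $\KK_\B$ whose objects are the finite-dimensional retracts of $\B$ and whose arrows are left-invertible embeddings. By property (2) applied to $\B$, the objects of $\KK_\B$ are precisely the $\KK$-objects; and the arrows of $\KK_\B$ are all left-invertible embeddings between them, which is exactly the arrow class of $\KK$. Hence $\KK_\B = \KK$ as metric-enriched categories, so $\B$ is the Fra\"\i ss\'e limit of $\KK$. By the uniqueness of the Fra\"\i ss\'e limit (Theorem~\ref{uni-hom}, or the uniqueness of the Fra\"\i ss\'e sequence satisfying (F)), $\B\cong\A_\KK$.

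The only point requiring a little care is the claim $\KK_\B=\KK$: one must check not merely that the two categories have the same objects but that they have the same morphisms and the same metric-enriched structure. This is immediate here because in both categories the morphisms between two fixed objects are declared to be \emph{all} left-invertible embeddings, with the metric $\|\phi-\psi\|$ inherited from the ambient $C^*$-algebras; so agreement on objects forces agreement as categories. I do not expect any genuine obstacle: every ingredient (the Cantor property of $\A_\KK$, the retract characterization, the representation of a Cantor-property algebra as a Fra\"\i ss\'e limit of its retract category, and uniqueness of Fra\"\i ss\'e limits) has already been assembled in the preceding results, and the proof is essentially a matter of citing Lemma~\ref{A_K-CS}, Lemma~\ref{sub-retract}, Corollary~\ref{CORlim-retract}, Theorem~\ref{Cantor-closed-Fraisse} and Theorem~\ref{uni-hom} in the right order.
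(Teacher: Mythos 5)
Your proposal is correct and follows essentially the same route as the paper: property (1) via Lemma \ref{A_K-CS}, property (2) via the $\KK$-universality of $\A_\KK$ (equivalently Corollary \ref{CORlim-retract}) in one direction and Lemma \ref{sub-retract}(2) plus $\oplus$-stability in the other, and uniqueness by observing that any algebra satisfying (1) and (2) has $\KK_\B=\KK$ and invoking Theorem \ref{Cantor-closed-Fraisse} together with uniqueness of Fra\"\i ss\'e limits. Your extra remark that equality of object classes forces $\KK_\B=\KK$ as metric-enriched categories is a worthwhile clarification of a step the paper treats as immediate.
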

\begin{proof}
We have already shown that $\A_\KK$ has the Cantor property  (Lemma \ref{A_K-CS}). By Lemma \ref{sub-retract}(2), every finite-dimensional retract of $\A_\KK$ is a $\KK$-object and every finite-dimensional $C^*$-algebra in $\KK$ is a retract of $\A_\KK$, by the $\KK$-universality of $\A_\KK$. If $\A$ is an AF-algebra satisfying (1) and (2), then by definition $\KK_\A= \KK$. The uniqueness of the Fra\"\i ss\'e limit and Theorem \ref{Cantor-closed-Fraisse} imply that $\A \cong \A_\KK$.   
\end{proof}
\begin{corollary}\label{retract-coro}
 Two AF-algebras with Cantor property are isomorphic if and only if they have the same set of matrix algebras as retracts. 
\end{corollary}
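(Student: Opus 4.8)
The plan is to deduce Corollary~\ref{retract-coro} directly from Theorem~\ref{A_K-characterization} together with Theorem~\ref{Cantor-closed-Fraisse}. First I would observe that ``matrix algebra retracts'' and ``finite-dimensional retracts'' carry the same information: by Proposition~\ref{sum-retract}, if $\A$ has the Cantor property then the class of finite-dimensional retracts of $\A$ is closed under finite direct sums, and by Proposition~\ref{fact1} every finite-dimensional $C^*$-algebra is a finite direct sum of matrix algebras while every ideal of such a sum (hence every retract, again by Proposition~\ref{fact1}) is again such a sum. Consequently two AF-algebras with the Cantor property have the same matrix-algebra retracts if and only if they have the same finite-dimensional retracts, if and only if the associated categories $\KK_\A$ and $\KK_\B$ (from the subsection on the category $\KK_\A$) are literally equal as categories of finite-dimensional $C^*$-algebras and left-invertible embeddings.

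Next I would invoke Theorem~\ref{Cantor-closed-Fraisse}, which says that any AF-algebra $\A$ with the Cantor property is the Fra\"\i ss\'e limit of its own category $\KK_\A$. So if $\A$ and $\B$ both have the Cantor property and $\KK_\A = \KK_\B$, then $\A$ and $\B$ are Fra\"\i ss\'e limits of the \emph{same} $\oplus$-stable category (that $\KK_\A$ is $\oplus$-stable is Lemma~\ref{K_A closed}), and the uniqueness of the Fra\"\i ss\'e limit (Theorem~\ref{uni-hom}, or equivalently the uniqueness built into Theorem~\ref{A_K-characterization}) gives $\A \cong \B$. The converse direction is immediate: an isomorphism $\A \cong \B$ carries left-invertible embeddings into $\A$ to left-invertible embeddings into $\B$ and conversely, so $\A$ and $\B$ have exactly the same retracts, in particular the same matrix-algebra retracts.

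There is essentially no obstacle here; the corollary is a repackaging of Theorem~\ref{A_K-characterization}. The only point needing a word of care is the reduction from ``same matrix-algebra retracts'' to ``$\KK_\A = \KK_\B$'', i.e.\ checking that knowing the matrix retracts determines all finite-dimensional retracts. This is exactly where Proposition~\ref{sum-retract} (closure of the retract class under $\oplus$) and Proposition~\ref{fact1} (a finite-dimensional $C^*$-algebra is a retract of a finite-dimensional algebra precisely when it is an ideal, i.e.\ a sub-sum of matrix blocks) are used. Once that is noted, applying Theorem~\ref{A_K-characterization} with $\KK := \KK_\A = \KK_\B$ finishes the proof.

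\begin{proof}
Suppose $\A$ and $\B$ are AF-algebras with the Cantor property having the same matrix algebras as retracts. By Proposition~\ref{fact1}, a finite-dimensional $C^*$-algebra $\D$ is a retract of a finite-dimensional $C^*$-algebra if and only if it is (isomorphic to) an ideal of it; combined with Proposition~\ref{sum-retract}, which shows the class of finite-dimensional retracts of an AF-algebra with the Cantor property is closed under finite direct sums, it follows that the finite-dimensional retracts of $\A$ are exactly the finite direct sums of matrix-algebra retracts of $\A$, and similarly for $\B$. Hence $\A$ and $\B$ have the same finite-dimensional retracts, so $\KK_\A = \KK_\B$ as categories. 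By Lemma~\ref{K_A closed} this common category is $\oplus$-stable, and by Theorem~\ref{Cantor-closed-Fraisse} both $\A$ and $\B$ are its Fra\"\i ss\'e limit; by the uniqueness of the Fra\"\i ss\'e limit (Theorem~\ref{uni-hom}), $\A \cong \B$. Conversely, an isomorphism $\A \cong \B$ induces a bijection between left-invertible embeddings into $\A$ and into $\B$, so $\A$ and $\B$ have the same retracts, in particular the same matrix-algebra retracts.
\end{proof}
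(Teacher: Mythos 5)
Your proof is correct and follows the route the paper intends: the corollary is stated there as an immediate consequence of Theorem~\ref{A_K-characterization}, whose proof likewise rests on Theorem~\ref{Cantor-closed-Fraisse} and uniqueness of the Fra\"\i ss\'e limit. Your explicit reduction from ``same matrix-algebra retracts'' to $\KK_\A = \KK_\B$ via Propositions~\ref{fact1} and~\ref{sum-retract} is exactly the detail the paper leaves implicit.
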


\subsection{Examples}\label{tensor-cp}

Corollary \ref{retract-coro} shows that there is a one to one correspondence between AF-algebras with the Cantor property and the collections of (non-isomorphic) matrix algebras (hence, with the subsets of the natural numbers). More precisely, given any collection $X$ of non-isomorphic matrix algebras, let $\KK_X$ denote the $\oplus$-stable category whose objects are finite direct sums of the matrix algebras in $\KK_X$ (finite direct sums of a member of $X$ with itself are of course allowed) and left-invertible embeddings as arrows. Then the Fra\"\i ss\'e limit of $\KK_X$ is the unique AF-algebra whose  matrix algebra retracts are exactly the members of $X$.

The class of AF-algebras with the Cantor property is not closed under direct sum (for instance, $(M_2 \oplus M_3) \otimes C(2^\mathbb N)$ does not have the Cantor property, as its Bratteli diagram easily reveals, while $M_2\otimes C(2^\mathbb N)$ and $M_3 \otimes C(2^\mathbb N)$ do). The following example shows that this class is also not closed under tensor product.

Let $\A$ denote the unique AF-algebra with the Cantor property whose matrix algebra retracts are exactly $\{M_2, M_3, M_5, M_{11}\}$. We claim that $\A \otimes \A$ does not have the Cantor property.
Suppose $\A = \varinjlim (\A_n, \phi_n^m)$  where the sequence satisfies (D0)--(D2) of Definition \ref{Bratteli}. 
 Clearly  $\A\otimes \A$ is the limit of the left-invertible sequence $ (\A_n\otimes \A_n, \phi_n^m\otimes \phi_n^m)$. Therefore by Lemma \ref{sub-retract} every matrix algebra retract of  $\A\otimes \A$ is isomorphic to $\D \otimes \E$, where $\D, \E\in \{M_2, M_3, M_5 , M_{11}\}$. 
Take a retract of $\A_n\otimes \A_n$ isomorphic to $M_3 \otimes M_5$ (for large enough $n$ there is such a retract) and let $\gamma : M_3 \otimes M_5 \to M_2 \otimes M_{11}$ be an embedding of multiplicity $1$. However, there is no embedding $\phi\otimes \psi :  M_3 \otimes M_5 \to M_{22} \cong M_2 \otimes M_{11}$ which corresponds to a path in the Bratteli diagram of the sequence  $ (\A_n\otimes \A_n, \phi_n^m\otimes \phi_n^m)$. This is because the codomain of any such $\phi$ should be either $M_3$ or $M_5$ or $M_{11}$ (since $\phi$ corresponds to a path in the  Bratteli diagram of the sequence  $ (\A_n, \phi_n^m)$) and similarly the codomain of $\psi$ could only be $M_5$ or $M_{11}$, while the tensor product of their codomains should be isomorphic to $M_{22}$, which is not possible.  Thus condition (D2) is satisfied neither by the sequence  $ (\A_n\otimes \A_n, \phi_n^m\otimes \phi_n^m)$,   nor by any sequence of finite-dimensional $C^*$-algebras whose limit is $\A \otimes \A$ (see Remark \ref{remark-CP-def}), which means that $\A\otimes \A$ does not have the Cantor property.

\section{Universal AF-algebras}\label{universal-section}

Let ${\mathfrak F}$ denote the category of \emph{all} finite-dimensional $C^*$-algebras and left-invertible embeddings. 
 The category ${\mathfrak F}$ is $\oplus$-stable and therefore it is Fra\"\i ss\'e  by Theorem \ref{closed-Fraisse}. The Fra\"\i ss\'e limit $\A_\mathfrak F$ of this category has the universality property (Corollary \ref{inj. uni. AF}) that any AF-algebra which is the limit of a left-invertible sequence of finite-dimensional $C^*$-algebras can be embedded via a left-invertible embedding into $\A_\mathfrak F$. In fact,  $\A_\mathfrak F$ is surjectively universal in the category of all (separable) AF-algebras. 
 \begin{theorem}\label{universal AF-algebra}
There is a surjective homomorphism from $\A_{\mathfrak F}$ onto any separable AF-algebra.
\end{theorem}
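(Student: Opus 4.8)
The plan is to factor an arbitrary separable AF-algebra through an AF-algebra with left-invertible connecting maps, and then invoke the injective universality of $\A_\mathfrak F$ that has already been established.

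First I would fix a separable AF-algebra $\B$ and apply Proposition \ref{ess-quotient}: it produces a separable AF-algebra $\A$, containing $\B$, which is the limit of a left-invertible sequence of finite-dimensional $C^*$-algebras, together with an essential ideal $\J$ of $\A$ such that $\A / \J \cong \B$. Composing the canonical quotient map $\A \surj \A/\J$ with this isomorphism yields a surjective homomorphism $q \colon \A \surj \B$.

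Next I would observe that a left-invertible sequence of finite-dimensional $C^*$-algebras is precisely an $\mathfrak F$-sequence, so $\A$ is an object of $\LL\mathfrak F$. Since $\mathfrak F$ is $\oplus$-stable, Theorem \ref{closed-Fraisse} applies, and then Corollary \ref{inj. uni. AF} (universality) --- equivalently Corollary \ref{CORlim-retract} --- gives that $\A$ is a retract of $\A_\mathfrak F$; that is, there is a left-invertible embedding $\A \hookr \A_\mathfrak F$, and taking $p \colon \A_\mathfrak F \surj \A$ to be any left inverse of this embedding produces a surjective homomorphism. The composition $q \circ p \colon \A_\mathfrak F \surj \B$ is then the desired surjective homomorphism onto $\B$.

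I do not expect a genuine obstacle at this stage: the two substantial inputs --- realizing any separable AF-algebra as a quotient by an essential ideal of an AF-algebra with left-invertible connecting maps (Proposition \ref{ess-quotient}), and the injective universality of the Fra\"\i ss\'e\ limit $\A_\mathfrak F$ within $\LL\mathfrak F$ (Corollary \ref{inj. uni. AF} / Corollary \ref{CORlim-retract}) --- are already in place, and all that remains is the trivial remark that the algebra delivered by Proposition \ref{ess-quotient} indeed lies in $\LL\mathfrak F$ so that the universality corollary is applicable. If anything merits a sentence of care, it is only spelling out that a "retract" yields a surjection via the chosen left inverse, and that composing two surjective homomorphisms gives a surjective homomorphism.
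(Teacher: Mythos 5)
Your proposal is correct and is essentially identical to the paper's own proof: both combine Proposition \ref{ess-quotient} with the universality of $\A_{\mathfrak F}$ from Corollary \ref{inj. uni. AF} to obtain a left-invertible embedding $\A \hookr \A_{\mathfrak F}$, and then compose a left inverse with the quotient map $\A \surj \A/\J \cong \B$. No gaps; the paper's argument is word-for-word the same chain of reductions.
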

\begin{proof}
Suppose $\B$ is a separable AF-algebra.
Proposition \ref{ess-quotient} states that there is an AF-algebra $\A$, which is the limit of a left-invertible sequence of finite-dimensional $C^*$-algebras  and $\A/\J \cong \B$, for some ideal $\J$.  
 By the universality of $\A_{\mathfrak F}$ (Corollary \ref{inj. uni. AF}) there is a left-invertible embedding $\phi: \A \hookr \A_{\mathfrak F}$. If $\theta: \A_{\mathfrak F} \surj \A$ is a left inverse of $\phi$ then its composition with the quotient map $\pi: \A \surj \A / \J$ gives a  surjective homomorphism from $\A_{\mathfrak F}$ onto $\B$.
\end{proof}
 
\begin{remark}\label{not unique}
 Since $\A_\mathfrak F$ has the Cantor property (Lemma \ref{A_K-CS}), it does not have any minimal projections. Therefore, for example, it cannot be isomorphic to $\A_\mathfrak F \oplus \mathbb C$. Hence the property of being surjectively universal AF-algebra is not unique to $\A_\mathfrak F$. 
\end{remark}

 \begin{corollary}\label{main-corollary}
  An AF-algebra $\mathcal A$ is surjectively universal if and only if  $\A_\mathfrak F$ is a quotient of $\A$.
 \end{corollary}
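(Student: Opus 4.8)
The plan is to deduce both directions immediately from Theorem~\ref{universal AF-algebra}, which already asserts that $\A_\mathfrak F$ is itself surjectively universal, together with the trivial observation that $\A_\mathfrak F$ is a separable AF-algebra (being the Fra\"\i ss\'e limit of a sequence of finite-dimensional $C^*$-algebras). Throughout, recall that an AF-algebra $\mathcal A$ is \emph{surjectively universal} if it admits a surjective homomorphism onto every separable AF-algebra.

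For the ``if'' direction, suppose $\A_\mathfrak F$ is a quotient of $\mathcal A$, witnessed by a surjection $\rho\colon \mathcal A \surj \A_\mathfrak F$. Given an arbitrary separable AF-algebra $\mathcal B$, Theorem~\ref{universal AF-algebra} provides a surjective homomorphism $\sigma\colon \A_\mathfrak F \surj \mathcal B$, and then $\sigma\circ\rho\colon \mathcal A \surj \mathcal B$ is again surjective. Since $\mathcal B$ was arbitrary, $\mathcal A$ is surjectively universal. For the ``only if'' direction, assume $\mathcal A$ is surjectively universal. Since $\A_\mathfrak F$ is a separable AF-algebra, applying the defining property of $\mathcal A$ with $\mathcal B = \A_\mathfrak F$ yields a surjective homomorphism $\mathcal A \surj \A_\mathfrak F$; that is, $\A_\mathfrak F$ is a quotient of $\mathcal A$.

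There is no genuine obstacle here: the statement is a formal consequence of the fact that surjective universality is transitive under composition of quotient maps, combined with Theorem~\ref{universal AF-algebra}. The only point worth recording is that $\A_\mathfrak F$ really does lie in the class over which universality is quantified, which is clear since it is a separable AF-algebra by construction.
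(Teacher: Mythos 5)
Your proof is correct and is exactly the argument the paper intends (the corollary is stated without proof, being an immediate consequence of Theorem~\ref{universal AF-algebra}): one direction composes the quotient map onto $\A_\mathfrak F$ with the surjections supplied by that theorem, and the other simply instantiates the universality of $\mathcal A$ at the separable AF-algebra $\A_\mathfrak F$. Nothing is missing.
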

 
 Theorem \ref{A_K-characterization} provides a characterization of $\A_\mathfrak F$, up to isomorphism, in terms of its structure.

\begin{corollary}\label{Bratteli diagram-A_F}
$\A_\mathfrak F$ is the unique separable AF-algebra with  Cantor property such that every matrix algebra $M_k$ is a retract of $\A$.

Equivalently, an AF-algebra $\A$ is isomorphic to $\A_\mathfrak F$ if and only if there is a sequence $(\A_n, \phi_n^m)$  of finite-dimensional $C^*$-algebras and embeddings such that $\A = \varinjlim (\A_n, \phi_n^m)$ and the Bratteli diagram  $\mathfrak D$ of $(\A_n, \phi_n^m)$  satisfies (D0)-(D2) and 
\begin{enumerate}
\item[(D3)] for every $k$  there is $(n,s)\in \mathfrak D$ such that $\dim(n,s) = k$. 
\end{enumerate}
\end{corollary}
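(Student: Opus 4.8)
The plan is to deduce both assertions from Theorem~\ref{A_K-characterization} specialized to the category $\KK=\mathfrak F$, the work being to translate its condition (2) into the two stated forms.

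\emph{First assertion.} Since $\mathfrak F$ is $\oplus$-stable, Theorem~\ref{A_K-characterization} already says that $\A_\mathfrak F$ is the unique AF-algebra with the Cantor property whose finite-dimensional retracts are exactly the finite-dimensional $C^*$-algebras. So it remains to check that, for an AF-algebra with the Cantor property, ``every finite-dimensional $C^*$-algebra is a retract'' is equivalent to ``every matrix algebra $M_k$ is a retract''. Only the direction $\Leftarrow$ needs an argument, and it is immediate from Proposition~\ref{sum-retract}: a finite-dimensional $C^*$-algebra is a finite direct sum of matrix algebras, each a retract of $\A$ by hypothesis, and by iterating Proposition~\ref{sum-retract} a finite direct sum of finite-dimensional retracts of $\A$ is again a retract of $\A$. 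This gives the first (structural) characterization.

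\emph{Second assertion.} Here I must see that the extra requirement ``every $M_k$ is a retract of $\A$'' is captured, at the level of a representing sequence witnessing the Cantor property, exactly by (D3). If $\A$ has the Cantor property it admits a sequence $(\A_n,\phi_n^m)$ with $\A=\varinjlim(\A_n,\phi_n^m)$ whose Bratteli diagram $\mathfrak D$ satisfies (D0)--(D2). If moreover every $M_k$ is a retract of $\A$, then by Lemma~\ref{sub-retract}(2) $M_k$ is a retract of some $\A_{m'}$, and by Proposition~\ref{fact1} it is isomorphic to an ideal, i.e.\ a direct-sum component, of $\A_{m'}$; hence there is a node $(m',s)\in\mathfrak D$ with $\dim(m',s)=k$, which is (D3). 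Conversely, if $(\A_n,\phi_n^m)$ satisfies (D0)--(D3), then (D0)--(D2) give the Cantor property, while (D3) provides for each $k$ a node $(n,s)$ with $\dim(n,s)=k$; then $M_k\cong\A_{n,s}$ is an ideal, hence a retract, of $\A_n$ (Proposition~\ref{fact1}), hence a retract of $\A$ by the observation following Lemma~\ref{reverse-maps} that every $\A_n$, and every retract of it, is a retract of $\A$. Combining with the first assertion, $\A\cong\A_\mathfrak F$ precisely when such a sequence exists.

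\emph{Main obstacle.} There is no genuine obstacle — this is a corollary of the machinery already built — but the one step that needs care is the $\Rightarrow$ direction of the second assertion: passing from the abstract condition ``$M_k$ is a retract of $\A$'' to its occurrence as a matrix summand at some stage of a chosen Bratteli diagram. This is exactly what Lemma~\ref{sub-retract}(2) (together with Proposition~\ref{fact1}) supplies, and it implicitly uses Remark~\ref{remark-CP-def}, namely that any representing sequence of an AF-algebra with the Cantor property already satisfies (D1)--(D2), so that one is free to work with whichever such sequence is convenient.
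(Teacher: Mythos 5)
Your proposal is correct and follows exactly the route the paper intends: the paper states this corollary without proof as an immediate consequence of Theorem~\ref{A_K-characterization} applied to $\KK=\mathfrak F$, and your write-up simply supplies the routine translations (matrix algebras vs.\ all finite-dimensional algebras via Proposition~\ref{sum-retract}, and retracts vs.\ nodes of the Bratteli diagram via Lemma~\ref{sub-retract}(2) and Proposition~\ref{fact1}). No gaps.
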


\begin{theorem}\label{A_F-char}
$\A_\mathfrak F$ is the unique AF-algebra that is the limit of a left-invertible sequence of finite-dimensional $C^*$-algebras and for any finite-dimensional $C^*$-algebras $\D, \E$ and left-invertible embeddings  $\phi: \D \hookr \E$ and $\alpha: \D \hookr \A_\mathfrak F$ there is a left-invertible embedding $\beta:\E \hookr \A_\mathfrak F$ such that $\beta \circ \phi = \alpha$. 
\end{theorem}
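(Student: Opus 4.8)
The statement has two halves: that $\A_\mathfrak F$ has the stated one‑step extension property, and that it is the \emph{only} AF‑algebra which is the limit of a left‑invertible sequence of finite‑dimensional $C^*$‑algebras and has this property. Throughout, fix a Fra\"\i ss\'e sequence $(\A_n,\phi_n^m)$ for $\mathfrak F$ with $\varinjlim(\A_n,\phi_n^m)=\A_\mathfrak F$; since $\mathfrak F$ is $\oplus$‑stable (Theorem~\ref{closed-Fraisse}) this sequence satisfies (F) with $\epsilon=0$ (Remark~\ref{e=0}), and by Lemma~\ref{reverse-maps} we may fix a compatible family of left inverses $\pi_n^\infty\colon\A_\mathfrak F\surj\A_n$. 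Note that, by construction, $\A_\mathfrak F$ is the limit of a left‑invertible sequence, so it is an object of the hypothesis.

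For the first half, let left‑invertible embeddings $\phi\colon\D\hookr\E$ and $\alpha\colon\D\hookr\A_\mathfrak F$ be given. The first step is to push $\alpha$ down to a finite stage: by Lemma~\ref{twisting} there are $n$ and a unitary $u\in\widetilde{\A_\mathfrak F}$ with $u^*\alpha[\D]u\subseteq\phi_n^\infty[\A_n]$, and then $\alpha'\colon\D\to\A_n$, $\alpha'(d)\coloneqq\pi_n^\infty(u^*\alpha(d)u)$, is a left‑invertible embedding with $\phi_n^\infty\circ\alpha'=\Ad_u\circ\alpha$ (a left inverse $\rho$ of $\alpha'$ is given by $\rho(x)=\pi(u\,\phi_n^\infty(x)\,u^*)$ for $\pi$ a left inverse of $\alpha$, exactly as in the proof of Lemma~\ref{sub-retract}(2); the displayed identity uses that a left inverse of $\phi_n^\infty$ restricts $\phi_n^\infty$ to the identity on its image). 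Now I would amalgamate: choosing a left inverse $\pi_\phi$ of $\phi$, the proper amalgamation property of $\ddagger\mathfrak F$ (Theorem~\ref{closed-Fraisse}) applied to the $\ddagger\mathfrak F$‑arrows $(\alpha',\rho)\colon\D\to\A_n$ and $(\phi,\pi_\phi)\colon\D\to\E$ produces $F\in\mathfrak F$ and left‑invertible embeddings $\gamma\colon\A_n\hookr F$ and $\psi\colon\E\hookr F$ with $\gamma\circ\alpha'=\psi\circ\phi$. Applying (F) to the $\mathfrak F$‑arrow $\gamma$ gives $m\ge n$ and a left‑invertible embedding $\delta\colon F\hookr\A_m$ with $\delta\circ\gamma=\phi_n^m$. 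Then $\beta\coloneqq\Ad_{u^*}\circ\phi_m^\infty\circ\delta\circ\psi\colon\E\hookr\A_\mathfrak F$ is a left‑invertible embedding, and chasing $\psi\circ\phi=\gamma\circ\alpha'$, $\delta\circ\gamma=\phi_n^m$, $\phi_m^\infty\circ\phi_n^m=\phi_n^\infty$ and $\phi_n^\infty\circ\alpha'=\Ad_u\circ\alpha$ gives $\beta\circ\phi=\Ad_{u^*}\circ\Ad_u\circ\alpha=\alpha$.

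For the uniqueness half, let $\B=\varinjlim(\B_n,\psi_n^m)$ be an AF‑algebra with all $\psi_n^m$ left‑invertible and satisfying the extension property; fix compatible left inverses $\chi_n^\infty\colon\B\surj\B_n$. I claim the $\mathfrak F$‑sequence $(\B_n,\psi_n^m)$ satisfies the Fra\"\i ss\'e condition (F) of Theorem~\ref{generic}. Given $n$, $\epsilon>0$ and a left‑invertible embedding $\gamma\colon\B_n\hookr\D$, apply the extension property with $\phi=\gamma$ and $\alpha=\psi_n^\infty$ to obtain a left‑invertible embedding $\beta\colon\D\hookr\B$ with $\beta\circ\gamma=\psi_n^\infty$. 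As $\beta[\D]$ is finite‑dimensional, Lemma~\ref{twisting} yields $m\ge n$ and a unitary $v\in\widetilde\B$ with $\|1-v\|<\epsilon/2$ and $v^*\beta[\D]v\subseteq\psi_m^\infty[\B_m]$, and $\delta\colon\D\hookr\B_m$, $\delta(x)\coloneqq\chi_m^\infty(v^*\beta(x)v)$, is a left‑invertible embedding with $\psi_m^\infty\circ\delta=\Ad_v\circ\beta$ (this is the argument of Lemma~\ref{L1-L3}). Since $\psi_m^\infty$ is isometric and $\psi_n^\infty=\psi_m^\infty\circ\psi_n^m$, we get $\|\delta\circ\gamma-\psi_n^m\|\le 2\|1-v\|<\epsilon$, which verifies (F). The Fra\"\i ss\'e sequence $(\A_n,\phi_n^m)$ also satisfies (F) and $\B$ lies in $\LL\mathfrak F$; hence the uniqueness of the Fra\"\i ss\'e limit (the approximate back‑and‑forth, \cite[Theorem~3.5]{Kubis-ME}) forces $\B\cong\A_\mathfrak F$.

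The routine parts are the exhibition of explicit left inverses for $\alpha'$ and $\delta$ and the final identity‑chase. The one genuinely delicate point is, in the first half, the reduction of an arbitrary embedding $\alpha\colon\D\hookr\A_\mathfrak F$ to one factoring \emph{exactly} through a finite stage: this succeeds up to the inner automorphism $\Ad_u$ precisely because a left inverse of $\phi_n^\infty$ is a genuine homomorphic retraction, so $\phi_n^\infty\circ\alpha'$ recovers $\Ad_u\circ\alpha$ on the nose. In the uniqueness part the analogous subtlety—that a unitary produced by Lemma~\ref{twisting} need not fix the previously chosen subalgebra $\psi_n^\infty[\B_n]$—is simply sidestepped by working with the approximate form of (F) rather than trying to establish it with $\epsilon=0$.
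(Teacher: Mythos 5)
Your proposal is correct and follows essentially the same route as the paper: factor $\alpha$ through a finite stage of the Fra\"\i ss\'e sequence, amalgamate with $\phi$, apply the Fra\"\i ss\'e condition (F), and correct by a unitary; then prove uniqueness by verifying that any sequence with the stated extension property satisfies (F). The only (harmless) differences are that you apply the unitary correction at the start via Lemma~\ref{twisting} to get an exact factorization of $\Ad_u\circ\alpha$, whereas the paper factors approximately and invokes Lemma~\ref{fd-af} at the end, and that you spell out the uniqueness half, which the paper leaves as a one-line remark.
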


\begin{proof}
 Suppose $\A_\mathfrak F$ is the limit of the Fra\"\i ss\'e  $\mathfrak F$-sequence $(\A_n, \phi_n^m)$. By definition, $\alpha$ and $\phi$ are $\mathfrak F$-arrows. There is (Lemma~\ref{L1-L3}) a natural number $n$ and an $\mathfrak F$-arrow (a left-invertible embedding) $\psi: \D \hookr \A_n$ such that $\|\phi_n^\infty \circ \psi - \alpha\| <1$. Use the amalgamation property to find a finite-dimensional $C^*$-algebra $\G$ and left-invertible embeddings $\phi' : \E \hookr \G$ and $\psi': \A_n \hookr \G$ such that $\phi' \circ \phi = \psi' \circ \psi$ (see Diagram (\ref{G2-A})). The Fra\"\i ss\'e condition (F) implies the existence of $m\geq n$ and a left-invertible embedding $\delta : \G \hookr \A_m$ such that $\delta \circ \psi'= \phi_n^m$. Let $\beta' = \phi_m^\infty \circ \delta \circ \phi'$. It is clearly left-invertible. 

\begin{equation}\label{G2-A}
\begin{tikzcd}[row sep=small]
\A_1 \arrow[r, hook, "\phi_1^2"] 
& \A_2   \arrow[r, hook,  "\phi_2^3"]  & \dots   \arrow[r, hook] &\A_n 
\arrow[dr, hook,  "\psi'"]  \arrow[rr, hook,  "\phi_n^m"]&  & \A_m \arrow[rr, hook,  "\phi_m^\infty"]&  &\A_\mathfrak F\\ 
&&& & \G \arrow[ur, hook,  "\delta"]& \\
&&  \D \arrow[uur, hook',  "\psi"]   \arrow[rr, hook,  "\phi"] \arrow[uurrrrr, hook, bend right=40, crossing over, "\alpha"]  &  & \E \arrow[u, hook,  "\phi'"] \arrow[uurrr, hook, dashed, "\beta"]&
\end{tikzcd}
\end{equation}
 For every $d$ in  $\D$ we have 
$$ 
\beta' \circ \phi (d) = \phi_m^\infty \circ \delta \circ \phi' \circ \phi (d) =  \phi_m^\infty \circ \delta \circ \psi' \circ \psi (d)
=  \phi_m^\infty \circ \phi_n^m\circ \psi (d) =  \phi_n^\infty \circ \psi (d).
$$
Therefore $\|\beta' \circ \phi  - \alpha\|<1$. Conjugating $\beta'$ with a unitary in $\widetilde \A_\mathfrak F$ gives the required left-invertible embedding $\beta$ (Lemma \ref{fd-af}). 

For the uniqueness,  suppose $\B$ is the limit of a left-invertible sequence $(\B_n, \psi_n^m)$ of finite-dimensional $C^*$-algebras, satisfying the assumption of the theorem. Using this assumption we can show that $(\B_n, \psi_n^m)$ satisfies the Fra\"\i ss\'e condition (F) and therefore $\B$ is the Fra\"\i ss\'e limit of $\mathfrak F$. Uniqueness of the Fra\"\i ss\'e limit implies that $\B$ is isomorphic to $\A_\mathfrak F$.
\end{proof}

Let us conclude this section with another example of a Fra\"\i ss\'e category of finite-dimensional $C^*$-algebras.

\begin{remark} Note that a similar argument as in \ref{tensor-cp} shows that $\A_{\mathfrak F} \otimes \A_{\mathfrak F}$ does not have the Cantor property. In particular, $\A_{\mathfrak F}$ is not self-absorbing, i.e., $\A_{\mathfrak F} \otimes \A_{\mathfrak F}$ is not isomorphic to $\A_{\mathfrak F}$. 
\end{remark}

\subsection{The universal UHF-algebra}\label{universal uhf} Recall that a UHF-algebra is the (inductive) limit of
$$M_{k_1} \xrightarrow{\phi_1^2} M_{k_2} \xrightarrow{\phi_2^3} M_{k_3} \xrightarrow{\phi_3^4} \dots $$
of full matrix algebras, with unital connecting maps $\phi_n^{n+1}$. In particular $k_j|k_{j+1}$ for each $j$. To each sequence of natural numbers $\{k_j\}_{j\in \mathbb N}$ (hence to the corresponding UHF-algebra) a \emph{supernatural number} $n$ is associated, which is the formal product 
$$n = \prod_{p \text{ prime}} p^{n_p}$$
where  $n_p\in \{0,1,2, \dots, \infty\}$ and for each prime number $p$, 
$$n_p =\sup\{n: p^n|k_j \text{ for some } j\}.$$
Also to each supernatural number $n$ there is an associated UHF-algebra denoted, as it is common, by $M_n$ (e.g., the CAR-algebra is $M_{2^\infty}$). Glimm \cite{Glimm} showed that a supernatural number is a complete invariant for the associated UHF-algebra. Recall that   the \emph{universal UHF-algebra} (see \cite{Rordam-Stormer}), denoted by $\mathcal Q$, is the UHF-algebra associated to the supernatural number 
 $$n_\infty = \prod_{p \text{ prime}} p^{\infty}.$$
 The universal UHF-algebra $\Q$ is also the unique unital AF-algebra such that 
 $$\la K_0(\Q), K_0(\Q)_+,  [1_\Q] \ra \cong \la \mathbb Q, \mathbb Q_+, 1 \ra.$$
The multiplication of supernatural numbers is defined in the obvious way which means for supernatural numbers $n,m$ we have $M_n \otimes M_m \cong M_{nm}$. This in particular implies that $\Q \otimes \mathcal M \cong \Q$, for any UHF-algebra $\mathcal M$.

 Now suppose $\mathfrak M$ is the category of all nonzero matrix algebras and unital embeddings. Then $\mathfrak M$ is a Fra\"\i ss\'e category. The only nontrivial part of the latter statement is to show that $\mathfrak M$ has the amalgamation property, but this is quite easy since it is enough to make sure that the composition maps have the same multiplicities and then conjugating with a unitary makes sure that the composition maps are the same (this is similar to the proof of the amalgamation property in \cite[Theorem 3.4]{Eagle}). The Fra\"\i ss\'e limit of $\mathfrak M$ is $\mathcal Q$, since the universality property of the Fra\"\i ss\'e limit implies that the supernatural number associated to it must be $n_\infty$.

\section{Unital categories}\label{unital-sec}
The proof of Theorem \ref{closed-Fraisse} also shows that the category of all finite-dimensional $C^*$-algebras (or any $\oplus$-stable category) and \emph{unital} left-invertible embeddings  has the (proper) amalgamation property. However, this category fails to have the joint embedding property (note that $0$ is no longer an object of the category), since for example one cannot jointly embed $M_2$ and $M_3$ into a finite-dimensional $C^*$-algebra with unital left-invertible maps.

\subsection{The category $\widetilde{\mathfrak F}$}
Let $\widetilde{\mathfrak F}$ denote the category of all finite-dimensional $C^*$-algebras isomorphic to $\mathbb C \oplus \D$, for a finite-dimensional $C^*$-algebra $\D$, and unital left-invertible embeddings. This category is no longer $\oplus$-stable, however, a similar proof to the one of Theorem \ref{closed-Fraisse}, where the maps are unital, shows that $\ddagger \widetilde{\mathfrak F}$ has the proper amalgamation property. Therefore $\widetilde{\mathfrak F}$ is a  Fra\"\i ss\'e category, since $\mathbb C$ is the initial object of this category and therefore the joint embedding property is a consequence of the amalgamation property. The Fra\"\i ss\'e  limit $\A_{\widetilde{\mathfrak F}}$ of this category is a separable AF-algebra with the universality property that any unital AF-algebra which can be obtained as the limit of a  left-invertible unital sequence of finite-dimensional  $C^*$-algebras isomorphic to $\mathbb C \oplus \D$, can be embedded via a  left-invertible unital embedding into $\A_{\widetilde{\mathfrak F}}$. The unital analogue of Theorem \ref{universal AF-algebra} states the following. 

\begin{corollary}
 For every unital separable AF-algebra $\B$ there is a surjective  homomorphism from $\A_{\widetilde{\mathfrak F}}$ onto $\B$.
\end{corollary}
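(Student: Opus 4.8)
The plan is to imitate the proof of Theorem \ref{universal AF-algebra}, replacing the non-unital ingredients by their unital counterparts. Given a unital separable AF-algebra $\B$, the first step is to realize $\B$ as a quotient of a unital AF-algebra $\A$ which is the limit of a \emph{unital} left-invertible sequence of finite-dimensional $C^*$-algebras \emph{and} which lies in $\LL\widetilde{\mathfrak F}$, i.e.\ can be written as a limit of a sequence of algebras each isomorphic to $\mathbb C \oplus \D$. This is exactly the unital version of Proposition \ref{ess-quotient}: write $\B = \varinjlim(\B_n, \psi_n^m)$ with unital connecting maps, and form the same ``staircase'' diagram as in that proof but now using the identity maps and the $\psi_n^m$ as (unital, automatically left-invertible) connecting maps; the resulting limit $\A$ contains $\B$ and surjects onto it via the quotient by the essential ideal corresponding to all nodes except the top row. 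The one extra point to check is that $\A$ can be arranged to be a limit of algebras of the form $\mathbb C \oplus \D$: since adding a $\mathbb C$ summand to each $\B_n$ and to each stage of the staircase (with the obvious unital identity-on-$\mathbb C$ connecting maps) does not change the limit up to the $\mathbb C$-summand bookkeeping, one may harmlessly replace each $\B_n$ by $\mathbb C \oplus \B_n$ so that every object appearing is a $\widetilde{\mathfrak F}$-object and every connecting map is a unital left-invertible embedding in $\widetilde{\mathfrak F}$. Hence $\A$ is an $\LL\widetilde{\mathfrak F}$-object.

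The second step invokes the universality of $\A_{\widetilde{\mathfrak F}}$: since $\widetilde{\mathfrak F}$ is a Fra\"\i ss\'e category and $\ddagger\widetilde{\mathfrak F}$ has the proper amalgamation property (both established in the paragraph preceding the statement), Corollary \ref{CORlim-retract} (equivalently Lemma \ref{EPlim-retract}) applies with $\KK = \widetilde{\mathfrak F}$ and $\LL = \LL\widetilde{\mathfrak F}$: every AF-algebra in $\LL\widetilde{\mathfrak F}$ is a split-extension of $\A_{\widetilde{\mathfrak F}}$ — more precisely, there is a unital left-invertible embedding $\phi \colon \A \hookrightarrow \A_{\widetilde{\mathfrak F}}$ with a (unital, surjective) left inverse $\theta \colon \A_{\widetilde{\mathfrak F}} \surj \A$. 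Composing $\theta$ with the quotient map $\pi \colon \A \surj \A/\J \cong \B$ yields a surjective homomorphism $\pi \circ \theta \colon \A_{\widetilde{\mathfrak F}} \surj \B$, which is exactly the assertion of the corollary.

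The main obstacle, and the only place requiring care, is the first step: one must make sure that the AF-algebra $\A$ built over $\B$ genuinely lies in $\LL\widetilde{\mathfrak F}$, i.e.\ that all the finite-dimensional stages really are of the form $\mathbb C\oplus\D$ and all connecting maps are \emph{unital} left-invertible embeddings between such algebras. A unital homomorphism $\mathbb C\oplus\D_1 \to \mathbb C\oplus\D_2$ need not respect the distinguished $\mathbb C$ summands, but left-invertibility forces (by Proposition \ref{fact1}) a splitting in which one copy of $\mathbb C\oplus\D_1$ sits isomorphically inside the codomain, and one checks directly that the staircase construction, suitably padded with $\mathbb C$ summands, produces connecting maps of this kind; the identity-type maps along the diagonal of the staircase are visibly unital left-invertible embeddings of $\widetilde{\mathfrak F}$-objects. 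Everything else — the essentiality of the ideal $\J$, the identification $\A/\J\cong\B$, and the existence and unitality of the left inverse $\theta$ — is either literally the argument of Proposition \ref{ess-quotient} or an immediate citation of the cited results, so no further technical work is needed.
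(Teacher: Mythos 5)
Your proposal is correct and follows essentially the same route as the paper: reduce to the unital version of Proposition \ref{ess-quotient}, pad with a $\mathbb C$ summand so that all stages are $\widetilde{\mathfrak F}$-objects (the paper does this by passing to $\mathbb C\oplus\A$ after the staircase construction, you do it inside the staircase — the two are interchangeable since $\mathbb C\oplus\A=\varinjlim(\mathbb C\oplus\A_n)$), then apply the universality of $\A_{\widetilde{\mathfrak F}}$ and compose the left inverse with the quotient map. No gaps.
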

\begin{proof}
Suppose $\B$ is an arbitrary unital AF-algebra. Using Proposition \ref{ess-quotient} we can find a unital AF-algebra $\A\supseteq B$ which is the limit  of a  left-invertible unital sequence of finite-dimensional $C^*$-algebras, such that $\B$ is a quotient of $\A$. Thus $\mathbb C \oplus \A$ is the limit of a unital left-invertible sequence of finite-dimensional $C^*$-algebras of the form $\mathbb C \oplus \D$, for finite-dimensional $\D$. By the universality of $\A_{\widetilde{\mathfrak F}}$, there is a left-invertible unital embedding from $\mathbb C \oplus \A$ into $\A_{\widetilde{\mathfrak F}}$. Since $\B$ is a quotient of $\A$, there is a surjective homomorphism from $\mathbb C \oplus \A$ onto $\B$. Combining the two surjections gives us a surjective homomorphism from $\A_{\widetilde{\mathfrak F}}$ onto $\B$.
\end{proof}
\begin{remark}
Small adjustments in the proof of Lemma \ref{A_K-CS} show that $\A_{\widetilde{\mathfrak F}}$ has the Cantor property (in the sense of Definition \ref{CS-unital-def}).
In fact, it is easy to check that $\A_{\widetilde{\mathfrak F}}$ is isomorphic to  $\widetilde{\A_\mathfrak F}$, the unitization of $\A_\mathfrak F$.  This, in particular, implies that $\A_\mathfrak F$ is not unital. Since if it was unital, then $\widetilde{\A_\mathfrak F}$ (and hence $\A_{\widetilde{\mathfrak F}}$) would be isomorphic to $\A_\mathfrak F \oplus \mathbb C$, but this is not possible since $\A_{\widetilde{\mathfrak F}}$  has the Cantor property and therefore has no minimal projections.
\end{remark}


\begin{definition}
We say $\D$ is a \emph{unital-retract} of the $C^*$-algebra $\A$ if there is a  left-invertible unital embedding from $\D$ into $\A$.
\end{definition}

\subsection{The category $\widetilde{\KK}_\A$}
If $\A$ is a unital AF-algebra with  Cantor property (Definition \ref{CS-unital-def}), then let $\widetilde{\KK}_\A$ denote the category whose objects are finite-dimensional unital-retracts of $\A$ and  morphisms are unital left-invertible embeddings. This category is not $\oplus$-stable, since it does not satisfy condition (1) of Definition \ref{closed-def}.  However, $\ddagger\widetilde{\KK}_\A$ still has the proper amalgamations property.

\begin{proposition}
$\ddagger\widetilde{\KK}_\A$ has the proper amalgamation property.
\end{proposition}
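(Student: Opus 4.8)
The plan is to mimic the proof of Theorem~\ref{closed-Fraisse}, carefully tracking the requirement that all maps be unital. Suppose $\D, \E, \F$ are objects of $\widetilde{\KK}_\A$ (so they are finite-dimensional unital-retracts of $\A$) and we are given $\ddagger\widetilde{\KK}_\A$-arrows $(\phi,\pi)\colon \D \to \E$ and $(\psi,\theta)\colon \D \to \F$, where all four maps are unital. First I would invoke the unital version of Proposition~\ref{fact1}: since $\phi$ is a unital left-invertible embedding with left inverse $\pi$, we may write $\E \cong \E_0 \oplus \E_1$ with $\phi(d) = (\phi_0(d), \phi_1(d))$, where $\phi_0\colon \D \to \E_0$ is an isomorphism, $\phi_1\colon \D \to \E_1$ is a unital homomorphism, and $\pi(e_0,e_1) = \phi_0^{-1}(e_0)$; similarly $\F \cong \F_0 \oplus \F_1$ with analogous data for $\psi$. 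Note $\E_1$ and $\F_1$ need not have units equal to the corresponding units of $\D$ — they are genuine unital targets of $\phi_1,\psi_1$, hence $\mathbb{C}$-summable; this is where unitality of the connecting maps is used.

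The amalgam I would propose is $\G = \D \oplus \E_1 \oplus \F_1$, exactly as in Theorem~\ref{closed-Fraisse}, with $\phi'\colon \E \hookr \G$ given by $\phi'(e_0,e_1) = (\phi_0^{-1}(e_0), e_1, \mu(e_0,e_1))$ where $\mu = \psi_1\circ\pi$, and $\psi'\colon \F\hookr\G$ given by $\psi'(f_0,f_1)=(\psi_0^{-1}(f_0),\nu(f_0,f_1),f_1)$ where $\nu=\phi_1\circ\theta$; the left inverses $\pi',\theta'$ are defined by $\pi'(d,e_1,f_1)=(\phi_0(d),e_1)$ and $\theta'(d,e_1,f_1)=(\psi_0(d),f_1)$. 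The verifications that $\phi'\circ\phi=\psi'\circ\psi$, that $\pi'\circ\pi' $ is a left inverse of $\phi'$, and the two ``mixed'' identities $\theta'\circ\phi'=\psi\circ\pi$ and $\phi\circ\theta=\pi'\circ\psi'$ are the same computations as in Theorem~\ref{closed-Fraisse}. Since $\phi,\psi,\pi,\theta$ are unital and $\phi_0,\psi_0$ are isomorphisms, one checks directly that $\phi',\psi'$ are unital and $\pi',\theta'$ are unital, so all morphisms are $\widetilde{\KK}_\A$-morphisms provided $\G$ is a $\widetilde{\KK}_\A$-object.

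The main obstacle — the only place where this differs from Theorem~\ref{closed-Fraisse} — is showing $\G = \D\oplus\E_1\oplus\F_1$ is a \emph{unital-retract} of $\A$, since $\widetilde{\KK}_\A$ is not $\oplus$-stable and one cannot simply quote closure under direct sums. Here I would argue as follows: $\E$ and $\F$ are unital-retracts of $\A$, so by (the unital analogue of) Lemma~\ref{sub-retract} they are unital-retracts of some $\A_m$ in a representing sequence of $\A$; using the unital form of (D1) (and enlarging $m$) one arranges that a copy of $\E$ and a copy of $\F$ sit as orthogonal ideals inside $\A_m$, so $\A_m \cong \E\oplus\F\oplus\R$ for some $\R$. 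Since $\E_1$ is an ideal of $\E$ and $\F_1$ of $\F$, both $\E_1$ and $\F_1$ appear as ideals of $\A_m$. One more application of (D1)/(D2) (the unital version, with the dimension-sum equality) secures inside some $\A_{m'}$ pairwise-orthogonal ideals isomorphic to $\D$, $\E_1$, $\F_1$ whose combined dimensions equal $\dim(\A_{m',t})$ for a suitable node, giving a unital embedding of $\D\oplus\E_1\oplus\F_1$ into that summand with a unital left inverse — i.e. $\G$ is a unital-retract of $\A$. (Equivalently, and perhaps more cleanly, one observes that $\dim\D + \dim\E_1 + \dim\F_1 = \dim\D + (\dim\E - \dim\E_0) + (\dim\F-\dim\F_0) = \dim\E + \dim\F - \dim\D$, which matches the dimension bookkeeping dictated by the unital amalgamation, so the required node is produced by (D2) in its unital form.) With $\G\in\widetilde{\KK}_\A$ established, the rest of the argument is the verbatim computation from Theorem~\ref{closed-Fraisse}, and the proper amalgamation property of $\ddagger\widetilde{\KK}_\A$ follows.
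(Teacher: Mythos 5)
Your overall strategy is the same as the paper's: rerun the proof of Theorem~\ref{closed-Fraisse} with all maps unital and observe that the only new point is that $\G=\D\oplus\E_1\oplus\F_1$ must be shown to be a unital-retract of $\A$ (the paper reduces to exactly this and settles it by ``an easy argument using Proposition~\ref{fact1}''). Your verification that $\phi',\psi',\pi',\theta'$ are unital is fine. However, your argument for the key step has a genuine gap. The concluding claim --- a ``unital embedding of $\D\oplus\E_1\oplus\F_1$ into that summand [$\A_{m',t}$] with a unital left inverse'' --- is impossible unless $\G$ is simple: $\A_{m',t}$ is a full matrix algebra, so any surjective homomorphism from it onto $\G$ is injective, forcing $\G\cong\A_{m',t}$. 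More fundamentally, even reading the target as all of $\A_{m'}$, what you actually arrange (pairwise orthogonal ideals of $\A_{m'}$ isomorphic to $\D$, $\E_1$, $\F_1$) only shows that $\G$ is a \emph{retract} of $\A_{m'}$ in the sense of Definition~\ref{retract-def}. To get a \emph{unital}-retract you must in addition exhibit a unital homomorphism from $\G$ onto the complementary ideal $\mathcal{S}$ in $\A_{m'}\cong\G\oplus\mathcal{S}$; this is the whole content of the step, and neither (D1) nor the unital (D2) (which concerns multiplicities of an embedding into a single node, with $\dim(n,s)$ meaning matrix size rather than vector-space dimension, so your dimension identity does not feed into it) produces such a homomorphism.

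The missing ingredient comes from Proposition~\ref{fact1}. Choose $m$ so that $\E$ and $\F$ are unital-retracts of $\A_m$, giving decompositions $\A_m\cong\E\oplus\R\cong\F\oplus\R'$ together with \emph{unital} homomorphisms $\sigma\colon\E\to\R$ and $\tau\colon\F\to\R'$. Then pass to $m'>m$ so that $\A_{m'}=I_1\oplus I_2\oplus J$ with $I_1\cong I_2\cong\A_m$ (using (D1) for all nodes of $\A_m$ at once) and, by Proposition~\ref{fact1} applied to the unital map $\phi_m^{m'}$, a unital homomorphism $h\colon\A_m\to J$. Identifying $I_1\cong\D\oplus\E_1\oplus\R$ and $I_2\cong\D\oplus\F_1\oplus\R'$, the map sending $(d,e_1,f_1)$ to $\bigl((d,e_1,\sigma(d,e_1)),(d,f_1,\tau(d,f_1)),h(d,e_1,\sigma(d,e_1))\bigr)$ is a unital left-invertible embedding of $\G$ into $\A_{m'}$: it is unital because every coordinate map is, and a left inverse reads off $(d,e_1)$ from the first block and $f_1$ from the second. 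Some such use of the unital homomorphisms supplied by Proposition~\ref{fact1} is unavoidable; without it the step ``$\G$ is a unital-retract of $\A$'' is not established.
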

\begin{proof}
The proof is exactly the same as the proof of Lemma \ref{closed-Fraisse} where the maps are assumed to be unital. We only need to check that $\D \oplus \E_1 \oplus \F_1$ is a unital-retract of $\A$. By Lemma \ref{sub-retract}, for some $m$ both $\E\cong \D \oplus \E_1$ and $\F \cong \D \oplus \F_1$ are unital-retracts of $\A_m$.  An easy argument using Proposition \ref{fact1} shows that $\D \oplus \E_1 \oplus \F_1$ is also a unital-retract of $\A_m$ and therefore a unital retract of $\A$.
\end{proof}
Also $\widetilde{\KK}_\A$ has a weakly initial object (by the next lemma). Therefore it is a Fra\"\i ss\'e category. Recall that an object is \emph{weakly initial} in $\KK$ if it has at least one $\KK$-arrow to any other object of $\KK$.

\begin{lemma}\label{initial}
Suppose $\A$ is a unital AF-algebra with  Cantor property. The category $\widetilde{\KK}_\A$ has a weakly initial object, i.e., there is  a finite-dimensional unital-retract of $\A$ which can be mapped into any other finite-dimensional unital-retract of $\A$ via a left-invertible unital embedding.
\end{lemma}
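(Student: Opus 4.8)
The plan is to take as the weakly initial object a finite-dimensional unital-retract $W$ of $\A$ of smallest possible dimension, and to show that $W$ admits a unital left-invertible embedding into every finite-dimensional unital-retract of $\A$. Such a $W$ exists, since for any representing sequence $(\A_n,\phi_n^m)$ as in Definition~\ref{CS-unital-def} the algebra $\A_1$ is a unital-retract, so the dimensions of finite-dimensional unital-retracts of $\A$ form a nonempty set of positive integers. In fact I would first normalise the representing sequence so that $\A_1=W$: because $W$ is a unital-retract of some $\A_{n_0}$, Proposition~\ref{fact1} yields $\A_{n_0}\cong W\oplus\F$ together with a unital homomorphism $W\to\F$, and prepending the corresponding unital left-invertible embedding $W\hookrightarrow\A_{n_0}$ to the tail $(\A_n,\phi_n^m)_{n\ge n_0}$ gives another representing sequence for $\A$; its Bratteli diagram still satisfies the unital forms of (D0)--(D2), since each simple summand of $W$ is carried with multiplicity one onto a matching simple summand of $\A_{n_0}$, so all three conditions at the new first level are inherited from those of $\A_{n_0}$. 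From now on $\A_1=W$, and in particular $W$ is a unital-retract of every $\A_n$ via $\phi_1^n$.

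A useful preliminary observation is that whenever $M_a$ is a matrix algebra which is a (not necessarily unital) retract of $\A$ --- equivalently, $a$ occurs as a node dimension of the Bratteli diagram --- any left inverse $\sigma\colon\A\surj M_a$ of an embedding $\iota\colon M_a\hookrightarrow\A$ is automatically unital, because $\sigma(1_\A)$ is a projection of $M_a$ dominating $\sigma(\iota(1_{M_a}))=1_{M_a}$ and hence equal to $1_{M_a}$. Writing $W=W_1\oplus\dots\oplus W_p$ into matrix summands (each $\dim W_j$ is a node dimension, by Lemma~\ref{sub-retract}(2) applied to the retract $W_j$ of $\A$), it follows that for every finite-dimensional unital-retract $\D$ of $\A$ and every unital embedding $j_\D\colon\D\hookrightarrow\A$, the composite $\sigma_j\circ j_\D\colon\D\to M_{\dim W_j}$ is a unital homomorphism; this forces $\D$ to have a simple summand whose dimension divides $\dim W_j$, and --- combined with the minimality of $\dim W$, which should prevent any such divisor from being proper --- it should yield that $\D$ has an $M_{\dim W_j}$ summand for each $j$. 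Using condition (D2), equivalently (D$2'$) of Remark~\ref{remark-CP-def}, to separate these summands into mutually orthogonal positions after enlarging the ambient term of the sequence, one obtains $\D\cong W\oplus\F'$ with a unital homomorphism $W\to\F'$, i.e.\ a unital left-invertible embedding $W\hookrightarrow\D$ by Proposition~\ref{fact1}.

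The step I expect to be the main obstacle is precisely the use of minimality to upgrade ``$\D$ has a summand whose dimension divides $\dim W_j$'' to ``$\D$ has an $M_{\dim W_j}$ summand'': one must exclude the scenario in which $\D$ contains only a strictly lower-dimensional matrix summand in place of $M_{\dim W_j}$, and the way to rule this out is to show that in that scenario $W$ itself could be replaced by a unital-retract of $\A$ of strictly smaller dimension, contradicting the choice of $W$ --- making this replacement precise needs Lemma~\ref{sub-retract} together with a (D2)-repositioning inside a common $\A_m$. Once this is settled, $W$ is weakly initial, and its uniqueness is automatic: any weakly initial object has dimension $\dim W$ and embeds unitally and left-invertibly into $W$, hence is isomorphic to it.
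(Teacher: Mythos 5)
Your choice of candidate is defensible a posteriori (a weakly initial object, if it exists, must be the unique unital-retract of minimal dimension, since it realizes every other unital-retract $\D$ as $W\oplus\F$), and your observation that a left inverse onto a matrix-algebra retract of a unital algebra is automatically unital is correct; it gives, for every unital-retract $\D=\bigoplus_i M_{d_i}$ and every summand $W_j\cong M_{a_j}$ of $W$, a unital homomorphism $\D\to M_{a_j}$, i.e.\ $a_j=\sum_i m_i d_i$ with $m_i\ge 0$. However, the step you flag as ``the main obstacle'' is precisely the entire content of the lemma and is left unresolved, and the one intermediate claim you do commit to is false: a unital homomorphism $\bigoplus_i M_{d_i}\to M_{a_j}$ does \emph{not} force some $d_i$ to divide $a_j$ (consider $M_2\oplus M_3\to M_5$ with multiplicities $(1,1)$), so there is no ``proper divisor'' scenario to exclude by minimality. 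The relevant obstruction is additive, not multiplicative: one must rule out $a_j$ being a nonnegative integer combination of summand dimensions of $\D$ that are all strictly smaller than $a_j$.

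The paper's proof builds exactly this additive irreducibility into the candidate: from an arbitrary object $M_{k_1}\oplus\dots\oplus M_{k_l}$ it extracts the largest subset $\{k'_1,\dots,k'_t\}$ in which no $k'_i$ is a nonnegative integer combination of the others, and then closes the argument by a two-sided substitution --- writing $k'_i=\sum_j y_j n_j$ via a unital map into a copy of $M_{k'_i}$, noting that left-invertibility of $\phi_1^m$ makes each $M_{n_j}$ a summand of $\A_m$ and hence each $n_j$ a combination of the $k'_{j'}$ with $j'\ne i$ (the coefficient of $k'_i$ vanishes because $n_j<k'_i$), and deriving a contradiction with irreducibility. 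Your setup can in principle be completed along the same lines: minimality of $\dim W$ does force the dimensions $a_j$ to be additively irreducible over one another (otherwise dropping a reducible summand produces a strictly smaller unital-retract), and your unital maps $W\to M_{d_i}$ supply the reverse expansions $d_i=\sum_j n^{(i)}_j a_j$ needed for the substitution. But as written, the proposal neither states this combinatorial lemma nor proves it, so the argument is incomplete at its essential point.
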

\begin{proof}
Let $M_{k_1} \oplus \dots \oplus M_{k_l}$ be an arbitrary  $\widetilde{\KK}_\A$-object. Suppose that $\{k'_1, \dots, k'_t\}$ is the largest subset of $\{k_1, \dots, k_l\}$ such that $k'_i$ cannot be written as $\sum_{\substack{j \leq n\\ j\neq i}} x_j k'_j$ for any natural set numbers $\{x_j: j \leq n \text{ and } j\neq i\}$,  for any $i\leq t$. Since $\{k'_1, \dots, k'_t\}$ is the largest such subset, $\D= M_{k'_1} \oplus \dots \oplus M_{k'_t}$ is a  unital-retract of $M_{k_1} \oplus \dots \oplus M_{k_l}$ and therefore a  unital-retract of $\A$. Suppose $\F$ is an arbitrary $\widetilde{\KK}_\A$-object. 
 Let $(\A_n, \phi_n^m)$ be a $\widetilde{\KK}_\A$-sequence  with limit $\A$ such that $\A_1\cong \F$. Then $\D$ is a unital-retract of some $\A_m$, so $\A_m = \dot \D \oplus \E$, for some $\E$ and $\dot \D \cong \D$. 
 
 Fix $i\leq t$. Since $\phi_1^m$ is a unital embedding, there is a subalgebra of $\F$ isomorphic to $M_{n_1} \oplus \dots \oplus M_{n_s}$ such that $\sum_{j=1}^s y_j n_j = k'_i$, for some $\{y_1, \dots, y_s\}\subseteq \mathbb N$.  We claim that exactly one $n_j$ is equal to $k'_i$ and the rest are zero. If not, then for every $j\leq s$ we have $0<n_j < k'_i$. Since $\phi_1^m$ is left-invertible, for every $j\leq s$ a copy of $M_{n_j}$ appears as a summand of $\A_m$. Also because there is a unital embedding from $\D$ into $\A_m$, for some $\{x_1, \dots, x_r\}\subseteq \mathbb N$ we have $n_j = \sum_{\substack{j' \leq r\\ j'\neq i}} x_{j'} k'_{j'}$ for every $j\leq s$. But then 
 $$k'_i = \sum_{j=1}^s \sum_{\substack{j' \leq n\\ j'\neq i}} x_{j'} y_j k'_{j'}, $$ 
 which is a contradiction with the choice of $k_i'$.  This means that $\F =  \F_0 \oplus \F_1$ such that $ \F_0 \cong \D$ and there is a unital homomorphism from $\D$ onto $\dot \F_1$. Therefore $\D$ is a unital-retract of $\F$.
\end{proof}

\begin{corollary}
Suppose $\A$ is a unital AF-algebra with  Cantor property.
The category $\widetilde{\KK}_\A$ is a Fra\"\i ss\'e category and $\ddagger\widetilde{\KK}_\A$ has the proper amalgamation property. The  Fra\"\i ss\'e limit of  $\widetilde{\KK}_\A$ is $\A$.
\end{corollary}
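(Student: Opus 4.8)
The plan is to follow the pattern of the non-unital case in Theorem~\ref{Cantor-closed-Fraisse}, since the first two assertions are essentially already in place. By the Proposition preceding Lemma~\ref{initial}, $\ddagger\widetilde{\KK}_\A$ has the proper amalgamation property; applying the forgetful functor, $\widetilde{\KK}_\A$ has the amalgamation property, which by Remark~\ref{e=0} coincides with the near amalgamation property. Lemma~\ref{initial} supplies a weakly initial object, and the amalgamation property together with a weakly initial object gives the joint embedding property. Separability is immediate, as there are only countably many finite-dimensional $C^*$-algebras up to isomorphism and the arrow spaces are separable. Hence $\widetilde{\KK}_\A$ is a Fra\"\i ss\'e category, and Theorem~\ref{generic} produces its Fra\"\i ss\'e limit $\A_{\widetilde{\KK}_\A}$.

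It remains to identify this limit with $\A$. Using that $\A$ has the Cantor property in the sense of Definition~\ref{CS-unital-def}, fix a sequence $(\A_n,\phi_n^m)$ of finite-dimensional $C^*$-algebras with \emph{unital} connecting embeddings such that $\A=\varinjlim(\A_n,\phi_n^m)$ and the Bratteli diagram satisfies (D0)--(D2), with equality in (D2). Each $\phi_n^\infty\colon\A_n\hookr\A$ is a unital left-invertible embedding (unitality from the $\phi_n^m$, left-invertibility from Lemma~\ref{reverse-maps}), so every $\A_n$ is a finite-dimensional unital-retract of $\A$, i.e.\ a $\widetilde{\KK}_\A$-object, and each $\phi_n^m$ is a $\widetilde{\KK}_\A$-arrow; thus $(\A_n,\phi_n^m)$ is a $\widetilde{\KK}_\A$-sequence with limit $\A$. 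By uniqueness of the Fra\"\i ss\'e sequence (via \cite{Kubis-ME}), it suffices to verify that $(\A_n,\phi_n^m)$ satisfies condition (F): for every $n$ and every unital left-invertible embedding $\gamma\colon\A_n\hookr\E$ with $\E$ a finite-dimensional unital-retract of $\A$, there are $m\ge n$ and a unital left-invertible embedding $\delta\colon\E\hookr\A_m$ with $\delta\circ\gamma=\phi_n^m$.

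This last statement is the unital analogue of Lemma~\ref{absorbing}, and I would prove it by rerunning that proof keeping all maps unital. The block decomposition of $\gamma$ into $\gamma_i\colon\A_{n,Y_i}\hookr\E_i$ is unchanged; unitality of $\gamma$ forces $\sum_j\Mult_{\gamma_i}(\A_{n,j},\E_i)\dim\A_{n,j}=\dim\E_i$, which is exactly the hypothesis needed to invoke (D2) in the equality form of Definition~\ref{CS-unital-def}, producing pairwise orthogonal blocks $\A_{m,s_i}\cong\E_i$ of $\A_m$ with matching multiplicities along $\phi_n^m$ (using (D1) and enlarging $m$ for orthogonality); Lemma~\ref{matrix-absorbing} then gives isomorphisms $\delta_i\colon\E_i\to\A_{m,s_i}$ agreeing with the corresponding corners of $\phi_n^m$, and $\delta$ is assembled as in Lemma~\ref{absorbing}. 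The point requiring genuine care — and the step I expect to be the main obstacle — is checking that this $\delta$ is \emph{unital}: in the non-unital proof the ``off-diagonal'' pieces $\hat\delta_j$ simply absorb what is left of $\A_m$ with no constraint, whereas here one must verify $\delta(1_\E)=1_m$. Using that each $\hat\gamma_j\colon\A_{n,j}\to\E_{k(j)}$ is an isomorphism (so $\hat\gamma_j^{-1}(1_{\E_{k(j)}})=1_{\A_{n,j}}$) and that $\phi_n^m$ is unital (so $\sum_j\phi_n^m(1_{\A_{n,j}})=1_m$), a direct computation with the central projections $q_i=1_{\A_{m,s_i}}$ gives $\sum_j\hat\delta_j(1_{\E_{k(j)}})+\sum_i q_i=1_m$, so $\delta$ is unital; the verification $\delta\circ\gamma=\phi_n^m$ is as in Lemma~\ref{absorbing} (the equality in (D2) is what makes both sides carry identical multiplicities, so the final conjugation by a unitary of $\widetilde{\A_m}$ can be absorbed). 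Once (F) holds, uniqueness of the Fra\"\i ss\'e sequence yields $\A\cong\A_{\widetilde{\KK}_\A}$, which completes the proof.
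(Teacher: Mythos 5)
Your proposal is correct and follows essentially the same route as the paper, which simply cites the preceding Proposition and Lemma \ref{initial} for the Fra\"\i ss\'e-category part and then states that the identification of the limit is ``the same as Theorem \ref{Cantor-closed-Fraisse}, where all the maps are unital.'' You additionally spell out the one point the paper leaves implicit --- that unitality of $\gamma$ forces each $\gamma_i$ to be unital, so the equality form of (D2) applies and the assembled $\delta$ satisfies $\delta(1_\E)=1_m$ --- and that verification is accurate.
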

\begin{proof}
The proof of the fact that $\A$ is the Fra\"\i ss\'e limit of $\widetilde{\KK}_\A$ is same as Theorem \ref{Cantor-closed-Fraisse}, where all the maps are unital.  
\end{proof}

\section{Surjectively universal countable dimension groups}\label{K0-section}

A countable partially ordered abelian group $\la G, G^+ \ra$ is a (countable) \textit{dimension group} if it is isomorphic to the inductive limit of a sequence
$$\mathbb Z^{r_1} \xrightarrow{\alpha_1^2}\mathbb Z^{r_2} \xrightarrow{\alpha_2^3}\mathbb Z^{r_3} \xrightarrow{\alpha_3^4} \dots $$
for some natural numbers $r_n$, where $\alpha_i^j$ are positive group homomorphisms and $\mathbb Z^r$ is equipped with the ordering given by
$$(\mathbb Z^r)^+ = \{(x_1, x_2, \dots,x_r)\in \mathbb Z^r: x_i\geq 0 \text{ for } i=1,\dots,r\}.$$
A partially ordered abelian group that
is isomorphic to $\la \mathbb Z^r , (\mathbb Z^r)^+ \ra $, for a non-negative integer $r$, is usually called a \emph{simplicial group}.
A \emph{scale} $S$ on the dimension group $\la G, G^+ \ra$ is a generating, upward directed and hereditary  subset of $G^+$ (see \cite[IV.3]{Davidson}).

\begin{notation}\label{Notation}
If  $\la G, S \ra$ is a scaled dimension group as above,  we can recursively pick order-units  
$$\bar u_n = (u_{n,1}, u_{n,2}, \dots, u_{n,r_n})\in (\mathbb Z^{r_n})^+$$
of $\mathbb Z^{r_n}$ such that $\alpha_n^{n+1}(\bar u_n)\leq \bar u_{n+1} $ and $ S = \bigcup_n \alpha_n^\infty [[\bar 0, \bar u_n]]$.
Then we say the scaled dimension group $\la G, S\ra$  is the limit of the sequence $( \mathbb Z^{r_n}, \bar u_n, \alpha_n^m )$. If $(\bar u_n)$ can be chosen such that  $\alpha_n^{n+1} (\bar u_n) = \bar u_{n+1}$ for every $n\in \mathbb N$, then $G$ has an order-unit $u = \lim_n \alpha_n^\infty(\bar u_n)$. In this case we denote this dimension group with order-unit by $\la G, u\ra$.
\end{notation}

An isomorphism between scaled dimension groups is a positive group isomorphism which sends the scale of the domain to the scale of the codomain.
Given  a separable AF-algebra $\A$, its $K_0$-group $\la K_0(\A), K_0(\A)^+\ra$ is a (countable) dimension group and conversely any dimension group is isomorphic to   $K_0$-group of a separable AF-algebra. 
The \emph{dimension range} of $\A$,
 $$\mathcal D(\A) = \{[p]: p \text{ is a projection of }\A\}\subseteq K_0(\A)^+$$ 
 is a scale for $\la K_0(\A), K_0(\A)^+ \ra$, and therefore  $\la K_0(\A), \D(\A)\ra$ is a scaled dimension group. Conversely, every scaled dimension group is isomorphic to  
$\la K_0(\A), \D(\A)\ra$ for a separable AF-algebra $\A$.
 Elliott's classification of separable AF-algebras (\cite{Elliott})  states that $\la K_0(\A), \D(\A)\ra$ is a complete isomorphism invariant for the separable AF-algebra $\A$. 

\begin{theorem}[Elliott \cite{Elliott}]
Two separable AF-algebras $\A$ and $\B$ are isomorphic if and only if their scaled dimension groups are isomorphic. If $\A$ and $\B$ are unital, then they are isomorphic if and only if  $\la K_0(\A), [1_\A]\ra \cong \la K_0(\B), [1_\B]\ra$, as partially ordered abelian groups with order-units. 
\end{theorem}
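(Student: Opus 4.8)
The plan is to prove the substantive (``if'') direction by Elliott's approximate intertwining, the ``only if'' direction being purely functorial: a $*$-isomorphism $\A\to\B$ induces an order isomorphism $K_0(\A)\to K_0(\B)$ that carries projections to projections, hence $\D(\A)$ onto $\D(\B)$ and $[1_\A]$ to $[1_\B]$ in the unital case. For the converse I would first isolate two facts about finite-dimensional domains. The \emph{existence} fact: for finite-dimensional $C^*$-algebras $F,G$, every positive homomorphism $\beta\colon K_0(F)\to K_0(G)$ (with $\beta[1_F]\le[1_G]$, resp.\ $=[1_G]$ when a unital map is wanted) is $K_0(h)$ for some $*$-homomorphism $h\colon F\to G$ --- the block multiplicities of $h$ are read off from the values of $\beta$ on the standard generators of $K_0(F)^+$, and the scale (resp.\ order-unit) condition is precisely what makes those blocks fit inside $G$. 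The \emph{uniqueness} fact, which is essentially Lemma~\ref{fd-af} above: if $F$ is finite-dimensional, $\mathcal C$ is an AF-algebra and $h_0,h_1\colon F\to\mathcal C$ have $K_0(h_0)=K_0(h_1)$, then $h_1=\Ad_u\circ h_0$ for a unitary $u\in\widetilde{\mathcal C}$; this follows by using Lemma~\ref{twisting} to move $h_0[F]\cup h_1[F]$ into a common finite-dimensional subalgebra $\mathcal C_m$ up to a small unitary perturbation, where the classical fact for finite-dimensional algebras with matching $K_0$ applies.

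With these in hand, I would fix presentations $\A=\varinjlim(\A_n,\phi_n^m)$, $\B=\varinjlim(\B_n,\psi_n^m)$ by finite-dimensional algebras (unital connecting maps in the unital case) and the given isomorphism $\theta$ of scaled dimension groups. Because $K_0(\A_n)$ is finitely generated and $K_0(\B)=\varinjlim K_0(\B_m)$, the images of the finitely many standard positive generators of $K_0(\A_n)$ under $\theta\circ K_0(\phi_n^\infty)$ all lie in the image of $K_0(\psi_m^\infty)$ for $m$ large; after increasing $m$ to remove the direct-limit ambiguity (and to arrange $[1_{\A_n}]\mapsto[1_{\B_m}]$ in the unital case) this produces a positive homomorphism $\beta_n\colon K_0(\A_n)\to K_0(\B_m)$ with $K_0(\psi_m^\infty)\circ\beta_n=\theta\circ K_0(\phi_n^\infty)$, and symmetrically for $\theta^{-1}$. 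Then I would run a back-and-forth: choose $f_1\colon\A_1\to\B_{m_1}$ realizing such a $\beta_1$; choose $g_1\colon\B_{m_1}\to\A_{n_2}$ realizing the matching $\theta^{-1}$-map with $n_2$ large enough that $K_0(g_1\circ f_1)=K_0(\phi_1^{n_2})$, and use the uniqueness fact to conjugate $g_1$ by a unitary so that $g_1\circ f_1=\phi_1^{n_2}$ on the nose; then choose $f_2\colon\A_{n_2}\to\B_{m_2}$ realizing a $\theta$-map with $K_0(f_2\circ g_1)=K_0(\psi_{m_1}^{m_2})$ and conjugate $f_2$ so that $f_2\circ g_1=\psi_{m_1}^{m_2}$; and so on. The crucial scheduling point is that each conjugation touches only the \emph{most recently constructed} map, which has not yet appeared in any already-established triangle, so the resulting ladder
\[
\A_1\xrightarrow{\phi_1^{n_2}}\A_{n_2}\xrightarrow{\phi_{n_2}^{n_3}}\A_{n_3}\to\cdots,\qquad
\B_{m_1}\xrightarrow{\psi_{m_1}^{m_2}}\B_{m_2}\to\cdots
\]
with the $f_i$ and $g_i$ as the diagonal maps commutes exactly. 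Passing to inductive limits yields mutually inverse $*$-homomorphisms $\A\rightleftarrows\B$, i.e.\ a $*$-isomorphism; since every $f_i$ realizes a restriction of $\theta$, the induced map has $K_0$ equal to $\theta$ and therefore respects the scale (resp.\ order unit).

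I expect the main obstacle to be twofold: getting the uniqueness fact in the sharp form ``equal $K_0$ $\Rightarrow$ conjugate by a unitary of the unitization'', which is exactly where Lemmas~\ref{twisting} and~\ref{fd-af} are needed; and organizing the back-and-forth so that the unitary conjugations used to turn ``$K_0$ commutes'' into ``commutes exactly'' never undo commutativity already obtained --- resolved by always conjugating the newly built arrow. If one is unwilling to force exact equalities at each finite stage, the standard fallback is the approximate-intertwining argument with a summable sequence of tolerances $\varepsilon_n$, in which the isomorphism is assembled from Cauchy sequences of unitarily perturbed partial maps, just as in the proof of Lemma~\ref{EPlim-retract}. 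The remaining steps --- the existence fact and the enlarging-of-indices manipulations --- are routine bookkeeping with finitely generated ordered groups and block embeddings of matrix algebras.
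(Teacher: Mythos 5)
The paper does not contain a proof of this statement: it is Elliott's classification theorem, quoted from \cite{Elliott} and used as a black box, so there is no internal argument to compare yours against. Your proposal is the standard existence--uniqueness--intertwining proof (Elliott's original argument, as presented e.g.\ in Davidson, Ch.~IV, or R\o rdam--Larsen--Laustsen, Ch.~7), and it is correct in outline: the ``existence fact'' is the usual realization of a positive, scale- (resp.\ order-unit-) preserving homomorphism between $K_0$-groups of finite-dimensional algebras by a $*$-homomorphism read off from multiplicity matrices; the ``uniqueness fact'' is precisely what the paper's Lemma~\ref{fd-af} extracts from \cite[Lemma 7.3.2]{K-theory Rordam}; and your scheduling rule (conjugate only the most recently constructed arrow) is the correct way to turn a $K_0$-commuting ladder into an exactly commuting one. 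The one point deserving more care than you give it is your derivation of the uniqueness fact: after moving $h_0[F]\cup h_1[F]$ into a common finite-dimensional subalgebra $\mathcal C_m$, the two compressed maps have equal $K_0$ only after composing with $K_0(\mathcal C_m)\to K_0(\mathcal C)$, which need not be injective, so one cannot immediately invoke the finite-dimensional uniqueness statement inside $\mathcal C_m$. Since $K_0(F)$ is finitely generated, enlarging $m$ once more makes the finitely many discrepancies vanish already in $K_0(\mathcal C_{m'})$; this is the same ``remove the direct-limit ambiguity'' device you invoke in the existence step, so the gap is cosmetic rather than substantive.
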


\subsection{Surjectively universal dimension groups}
The universality property of $\la K_0(\A_\mathfrak F), \D(\A_{\mathfrak F}) \ra$ can be obtained by applying $K_0$-functor to Theorem \ref{universal AF-algebra}.

\begin{corollary}
The scaled (countable) dimension group $\la K_0(\A_\mathfrak F), \D(\A_{\mathfrak F}) \ra$  maps onto any countable scaled dimension group.
\end{corollary}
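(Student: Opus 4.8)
The plan is to derive this immediately from Theorem~\ref{universal AF-algebra} by functoriality of $K_0$. First I would recall that $K_0$ is a functor from $C^*$-algebras to partially ordered abelian groups which, on surjective $*$-homomorphisms of AF-algebras, produces surjective positive group homomorphisms; moreover it sends the dimension range $\D(\A)$ onto the dimension range $\D(\B)$ when $\A \surj \B$ is surjective. Concretely, if $\theta \colon \A \surj \B$ is a surjection of separable AF-algebras, then $K_0(\theta) \colon K_0(\A) \to K_0(\B)$ is surjective (because every projection in $\B$ lifts to a projection in $\A$, a standard AF-algebra fact, and $K_0$ of an AF-algebra is generated by classes of projections), it is positive by construction, and it carries $\D(\A)$ onto $\D(\B)$ since the projections of $\A$ map onto the projections of $\B$ up to Murray--von Neumann equivalence.

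Next I would invoke Theorem~\ref{universal AF-algebra}: given an arbitrary countable scaled dimension group $\la G, S\ra$, realize it (via Elliott's theorem, already quoted in the excerpt) as $\la K_0(\B), \D(\B)\ra$ for some separable AF-algebra $\B$. Theorem~\ref{universal AF-algebra} gives a surjective homomorphism $\theta \colon \A_{\mathfrak F} \surj \B$. Then $K_0(\theta) \colon \la K_0(\A_{\mathfrak F}), \D(\A_{\mathfrak F})\ra \to \la K_0(\B), \D(\B)\ra \cong \la G, S\ra$ is a surjective morphism of scaled dimension groups by the previous paragraph. This is exactly the assertion. I would also note briefly that $\la K_0(\A_{\mathfrak F}), \D(\A_{\mathfrak F})\ra$ is itself a countable scaled dimension group since $\A_{\mathfrak F}$ is a separable AF-algebra, so the statement is non-vacuous.

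There is essentially no obstacle here — the corollary is a formal consequence of the main theorem together with the already-recalled properties of the $K_0$-functor on AF-algebras. The only point requiring a word of care is the surjectivity of $K_0(\theta)$ and the fact that it respects the scales; both follow from projection lifting in AF-algebras (every projection in a quotient of an AF-algebra lifts), which is classical and can be cited from \cite{Davidson}. If one wanted the unital variant, one would instead apply $K_0$ together with the order-unit to the unital surjection, using $K_0(\theta)([1_{\A_{\mathfrak F}}]) = [1_\B]$; but for the scaled (non-unital) statement the argument above suffices.
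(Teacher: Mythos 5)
Your proposal is correct and follows exactly the paper's route: the paper derives this corollary by applying the $K_0$-functor to Theorem~\ref{universal AF-algebra}, using Elliott's classification to realize an arbitrary countable scaled dimension group as $\la K_0(\B),\D(\B)\ra$ for a separable AF-algebra $\B$. You have merely spelled out the standard functoriality and projection-lifting details that the paper leaves implicit.
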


By applying $K_0$-functor to Corollary \ref{Bratteli diagram-A_F}, we immediately obtain the following result.
\begin{corollary}\label{coro-uni-K0}
$\la K_0(\A_\mathfrak F), \D(\A_{\mathfrak F}) \ra$ is the unique scaled dimension group which is the limit of a sequence $( \mathbb Z^{r_n}, \bar u_n, \alpha_n^m )$ (as in Notation above) satisfying the following conditions:
\begin{enumerate}
\item for every $n\in \mathbb N$ and $1 \leq i \leq r_n$ there are $m\geq n $ and $1\leq j , j^\prime \leq r_m$ such that $j\neq j^\prime$, 
$u_{n,i} = u_{m,j} = u_{m,j^\prime}$ and  $\pi_j \circ \alpha_n^m(u_{n,i}) = u_{m,j}$ and $\pi_{j^\prime} \circ \alpha_n^m(u_{n,i}) = u_{m,j^\prime}$, where $\pi_j$ is the canonical projection from $\mathbb Z^{r_m}$ onto its $j$-th coordinate.

\item  for every $n,n^\prime\in \mathbb N$,  $1\leq  i^\prime \leq r_{n^\prime}$ and $\{x_1, \dots, x_{r_n}\}\subseteq \mathbb N \cup \{0\}$ such that 
$\sum_{i=1}^{r_n} x_i u_{n,i} \leq u_{n^\prime, i^\prime} $ there are $m\geq n$ and $1\leq j\leq r_m$ such that 
$u_{n^\prime,i^\prime} \leq u_{m,j}$ and $\pi_j \circ \alpha_n^m(u_{n,i}) = x_i.u_{n,i}$ for every $i \in \{1, \dots, r_n\}$.

\item For every $k\in \mathbb N$ there are natural numbers $n$ and $1\leq i \leq r_n$ such that $u_{n,i} = k$.
\end{enumerate}
\end{corollary}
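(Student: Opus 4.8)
The plan is to deduce this by applying the $K_0$-functor, together with Elliott's classification, to Corollary~\ref{Bratteli diagram-A_F}. The main point is the standard dictionary between Bratteli diagrams and sequences of simplicial groups, which I would make precise first. Let $\A=\varinjlim(\A_n,\phi_n^m)$ be an AF-algebra with Bratteli diagram $\mathfrak D$, and $\A_n=\A_{n,1}\oplus\dots\oplus\A_{n,r_n}$ the decomposition into matrix algebras; put $\bar u_n=(\dim(n,1),\dots,\dim(n,r_n))=[1_{\A_n}]\in(\mathbb Z^{r_n})^+$ and $\alpha_n^m=K_0(\phi_n^m)\colon\mathbb Z^{r_n}\to\mathbb Z^{r_m}$. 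As recalled in Section~\ref{pre-sec}, $\pi_j\circ\alpha_n^m$ sends the $i$-th standard generator to $\Mult_{\phi_n^m}(\A_{n,i},\A_{m,j})$, i.e.\ to the number of paths from $(n,i)$ to $(m,j)$ in $\mathfrak D$. Since $\phi_n^{n+1}(1_{\A_n})$ is a projection below $1_{\A_{n+1}}$ we have $\alpha_n^{n+1}(\bar u_n)\le\bar u_{n+1}$, and since every projection of $\A$ has the same $K_0$-class as the image under some $\phi_n^\infty$ of a projection of $\A_n$, which lies below $1_{\A_n}$, we get $\D(\A)=\bigcup_n\alpha_n^\infty[[\bar 0,\bar u_n]]$; hence $\la K_0(\A),\D(\A)\ra$ is the limit of $(\mathbb Z^{r_n},\bar u_n,\alpha_n^m)$ in the sense of Notation~\ref{Notation}. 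Conversely, every such sequence is realized this way, since a positive homomorphism $\mathbb Z^r\to\mathbb Z^s$ carrying $\bar u$ below $\bar v$ is $K_0$ of a (generally non-unital) homomorphism between the corresponding finite-dimensional $C^*$-algebras, so the sequence lifts to a Bratteli diagram of an AF-algebra which is the limit of a sequence of finite-dimensional $C^*$-algebras.

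Next I would read off that, under this dictionary, conditions (1), (2) and (3) of the statement are nothing but conditions (D1), (D2) (in the form (D$2'$) of Remark~\ref{remark-CP-def}) and (D3) of Corollary~\ref{Bratteli diagram-A_F}: in (1), the equalities $u_{n,i}=u_{m,j}=u_{m,j'}$ say the three nodes have equal dimension and then the prescribed behaviour of $\pi_j\circ\alpha_n^m$ forces exactly one path $(n,i)\to(m,j)$ (and similarly $(n,i)\to(m,j')$, $j\ne j'$), which is (D1); condition (2) is the $K_0$-reformulation of the multiplicity equality in (D$2'$), with the inequality $u_{n^\prime,i^\prime}\le u_{m,j}$ replacing the dimension inequality; and condition (3) says exactly that every $k$ occurs as some $\dim(n,s)$, which is (D3).

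To finish, suppose $\la G,S\ra$ is a scaled dimension group which is the limit of some $(\mathbb Z^{r_n},\bar u_n,\alpha_n^m)$ satisfying (1)--(3). Lifting this sequence as above yields an AF-algebra $\A$ with $\la K_0(\A),\D(\A)\ra\cong\la G,S\ra$ whose Bratteli diagram satisfies (D1), (D2) and (D3); by Remark~\ref{remark-CP-def} (which also lets us pass to a left-invertible representing sequence) $\A$ has the Cantor property and, by (D3), every matrix algebra is a retract of $\A$, so $\A\cong\A_\mathfrak F$ by Corollary~\ref{Bratteli diagram-A_F}, whence $\la G,S\ra\cong\la K_0(\A_\mathfrak F),\D(\A_\mathfrak F)\ra$ by Elliott's classification. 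For existence, a Bratteli diagram of $\A_\mathfrak F$ as in Corollary~\ref{Bratteli diagram-A_F} gives, under $K_0$, a sequence $(\mathbb Z^{r_n},\bar u_n,\alpha_n^m)$ satisfying (1)--(3) with limit $\la K_0(\A_\mathfrak F),\D(\A_\mathfrak F)\ra$. The only part that needs genuine care is the bookkeeping of the first paragraph: checking that the $K_0$-functor carries the full data of a Bratteli diagram, the scale included, faithfully onto the data of a sequence of simplicial groups as in Notation~\ref{Notation}; once that is in place, the statement is Corollary~\ref{Bratteli diagram-A_F} read through Elliott's theorem.
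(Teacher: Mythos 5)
Your proposal is correct and follows exactly the route the paper takes: the paper's entire proof is the one-line remark that the corollary follows by applying the $K_0$-functor to Corollary~\ref{Bratteli diagram-A_F}, and your argument just spells out the standard dictionary (nodes/dimensions $\leftrightarrow$ coordinates of $\bar u_n$, path multiplicities $\leftrightarrow$ entries of $K_0(\phi_n^m)$, dimension range $\leftrightarrow$ scale) together with Elliott's theorem, which is precisely what the paper leaves implicit. No discrepancies; you are simply more explicit than the source.
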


\begin{corollary}
The (countable) dimension group  with order-unit $\la  K_0(\A_{\widetilde{\mathfrak F}}), [1_{\A_{\widetilde{\mathfrak F}}}] \ra$   maps onto (there is a surjective normalized positive group homomorphism) any countable dimension group with order-unit.
\end{corollary}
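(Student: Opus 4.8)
The plan is to apply the $K_0$-functor to the unital surjective-universality statement proved just above (the Corollary asserting that every unital separable AF-algebra is a quotient of $\A_{\widetilde{\mathfrak F}}$), in exact parallel with the way the non-unital statement for $\la K_0(\A_{\mathfrak F}),\D(\A_{\mathfrak F})\ra$ was deduced from Theorem \ref{universal AF-algebra}.

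In detail: let $\la G,u\ra$ be an arbitrary countable dimension group with order-unit. By the realization half of Elliott's theorem there is a unital separable AF-algebra $\B$ together with an isomorphism of ordered abelian groups with order-unit $\la K_0(\B),[1_\B]\ra\cong\la G,u\ra$. The unital universality corollary provides a surjective homomorphism $\theta\colon\A_{\widetilde{\mathfrak F}}\surj\B$; since $\A_{\widetilde{\mathfrak F}}$ is unital and $\theta$ is onto, $\theta(1_{\A_{\widetilde{\mathfrak F}}})$ is a unit of $\B$, so $\theta$ is unital. Applying $K_0$ yields a group homomorphism $K_0(\theta)\colon K_0(\A_{\widetilde{\mathfrak F}})\to K_0(\B)$ which is automatically positive (functoriality of $K_0$) and normalized (unitality of $\theta$ forces $K_0(\theta)[1_{\A_{\widetilde{\mathfrak F}}}]=[1_\B]$); composing with the isomorphism $K_0(\B)\cong G$ then gives the desired normalized positive homomorphism onto $\la G,u\ra$, provided it is surjective.

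The only point requiring an argument is the surjectivity of $K_0(\theta)$. Writing $\J=\ker\theta$ --- an ideal of $\A_{\widetilde{\mathfrak F}}$, hence itself AF --- we have a short exact sequence $0\to\J\to\A_{\widetilde{\mathfrak F}}\to\B\to 0$, and the associated six-term exact sequence in $K$-theory degenerates, because $K_1$ vanishes for every AF-algebra, to the short exact sequence $0\to K_0(\J)\to K_0(\A_{\widetilde{\mathfrak F}})\to K_0(\B)\to 0$; in particular $K_0(\theta)$ is onto. (Equivalently, one may invoke continuity of $K_0$ under inductive limits together with the fact that a quotient map of AF-algebras is, up to the usual intertwining, induced by quotient maps of finite-dimensional algebras, for which surjectivity on $K_0$ is immediate.) I do not expect any genuine obstacle here: all of the substance is carried by the unital universality corollary, and the remainder is the routine verification that $K_0$ is a continuous, exact, order-preserving and unit-preserving functor on AF-algebras.
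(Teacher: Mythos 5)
Your proposal is correct and follows exactly the route the paper intends: the paper derives this corollary by applying the $K_0$-functor to the unital surjective-universality statement (the corollary that $\A_{\widetilde{\mathfrak F}}$ maps onto every unital separable AF-algebra), combined with Elliott's realization of dimension groups with order-unit as $K_0$ of unital AF-algebras. Your additional verification that $K_0(\theta)$ is surjective, normalized and positive (via vanishing of $K_1$ for AF-algebras and unitality of a surjection out of a unital algebra) is just the routine detail the paper leaves implicit.
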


A similar characterization of the dimension group with order-unit  $\la K_0(\A_{\widetilde{\mathfrak F}}), [1_{\A_{\widetilde{\mathfrak F}}}] \ra$ holds where $\alpha_n^m$ are order-unit preserving and in condition (2) of Corollary \ref{coro-uni-K0} the inequality $\sum_{i=1}^{r_n} x_i u_{n,i} \leq u_{n^\prime, i^\prime}$ is replaced with equality.

\ 

\paragraph{\bf Acknowledgements.} We would like to thank Ilijas Farah and Eva Perneck\'a for useful conversations and comments.

\bibliographystyle{amsplain}

\end{document}